\documentclass[a4paper,11pt,twoside]{amsart}

\topmargin=+3pt

\headsep=18pt

\textwidth=467pt \textheight=655pt

\oddsidemargin=-6pt \evensidemargin=-6pt

\usepackage{amsmath}
\usepackage{amsthm}
\usepackage{amssymb}
\usepackage[all]{xy}
\usepackage{hyperref}
\usepackage{rotating}
\usepackage{booktabs}

\newtheorem{theorem}{Theorem}[section]
\newtheorem{proposition}[theorem]{Proposition}
\newtheorem{lemma}[theorem]{Lemma}
\newtheorem{corollary}[theorem]{Corollary}

\numberwithin{equation}{section}

\theoremstyle{definition}
\newtheorem{definition}[theorem]{Definition}
\newtheorem{remark}[theorem]{Remark}

\newcommand{\supp}{\operatorname{Supp}}

\newcommand{\Db}{{\rm D}^{\rm b}}

\newcommand{\Pic}{{\rm Pic}}
\newcommand{\ch}{{\rm ch}}

\newcommand{\rk}{{\rm rk}}

\newcommand{\Hom}{{\rm Hom}}

\newcommand{\td}{{\rm td}}

\newcommand{\Hilb}{{\rm Hilb}}
\newcommand{\WIT}{{\text{-WIT}}}
\newcommand{\IT}{{\text{-IT}}}
\renewcommand{\dim}{{\rm dim}\,}
\renewcommand{\supp}{{\rm Supp}}

\newcommand{\coker}{{\rm coker}}

\newcommand{\dual}{^{\vee}}

\newcommand{\Ext}{{\rm Ext}}

\newcommand{\cal}{\mathcal}

\newcommand{\ke}{{\cal E}}

\newcommand{\kl}{{\cal L}}

\newcommand{\ko}{{\cal O}}
\newcommand{\kp}{{\cal P}}
\newcommand{\kq}{{\cal Q}}

\newcommand{\kt}{{\cal T}}

\newcommand{\ZZ}{\mathbb{Z}}

\newcommand{\CC}{\mathbb{C}}

\newcommand{\PP}{\mathbb{P}}



\def\ee{\varepsilon}





\begin{document}

\title{The Euclid-Fourier-Mukai algorithm for elliptic surfaces}
\author[M.\ Bernardara, G.\ Hein]{Marcello Bernardara and Georg Hein}

\address{Univerist\"at Duisburg-Essen, Fakult\"at f\"ur Mathematik. Universit\"atstrasse 2, 45117 Essen (Germany)}
\email{M.B.: marcello.bernardara@uni-due.de}

\email{G.H.: georg.hein@uni-due.de}

\begin{abstract}
We describe the birational correspondences, induced by the Fourier-Mukai
functor, between moduli spaces of semistable sheaves on elliptic
surfaces with sections, using the notion of $P$-stability in the derived
category. We give explicit conditions to determine whether these
correspondences are isomorphisms. This is indeed not true in general and
we describe the cases where the birational maps are Mukai flops.
Moreover, this construction provides examples of new compactifications
of the moduli spaces of vector bundles via sheaves with torsion and via
complexes. We finally get for any fixed dimension an isomorphism between the Picard groups of the
moduli spaces.
\end{abstract}

\maketitle

\section{Introduction}
Let $X \to \PP^1$ be a K3 elliptic surface with a section and
at most finitely many nodal
singular fibers and $\widehat{X} \to \PP^1$ its relative generalized Jacobian.  The relative
Poincar\'e bundle gives a Fourier-Mukai equivalence $\Psi: \Db(X) \to
\Db(\widehat{X})$. In this paper, we consider moduli spaces $M_X(v)$ of
semistable sheaves with a fixed Mukai vector $v$ on $X$ and we describe
how we can fix a Mukai vector $\hat{v}$ on $\widehat{X}$ in order to
describe via $\Psi$ a birational map $\varepsilon$ between $M_X(v)$ and
$M_{\widehat{X}}(\hat{v})$. Moreover, we give conditions on $v$ in order to
get an isomorphism or a very nice birational map, that is a Mukai flop.
These conditions depend only on the dimension of the moduli space.
Finally, we show that, with no restriction on the dimension of $M_X(v)$,
the birational maps induced by the Fourier--Mukai equivalence induce
an isomorphism between the Picard groups. It follows in particular
that for all $v$ such that $M_X(v)$ is smooth of dimension $2t$, the Picard
group of $M_X(v)$ is isomorphic to the Picard group of $\Hilb^t(X)$. 
Remark that we restrict, in order to have simple notations, to the
case where $X$ is K3. This restriction should not be necessary, because
it used throughout the paper only to simplify numerical computations.
The stated results should then hold for any elliptic surface, even
though the numerical data have to be changed. The existence of a section
of $X \to \PP^1$ gives an isomorphisms $X \cong \widehat X$. However, we think
it clarifies the exposition if we distinguish between both objects.

The key point needed for this result is the construction of a stability condition in the derived
category $\Db(X)$, called $P$-stability, defined in \cite{HePold,HP}, which recovers the slope stability and
is well behaved with respect to derived
equivalences. We will describe how to recover the $\mu$-stability with respect to a given polarization and
we will then look at semistable sheaves as stable objects in the derived category.
The interest of our results is then twofold: the study of birational maps between moduli spaces
and the construction
of moduli spaces of objects in the derived category.

Moduli spaces of semistable sheaves on elliptic surfaces have been intensively studied in the last twenty
years. The cornerstone in the theory is \cite{friedellip}. In that paper, the rank two case is studied, and most of the
basic notions are introduced. For example, the use of a fiberlike (called suitable there) polarization
(see also \cite{OG}), and the description of a birational map between moduli spaces. In the case of an elliptic surface
with a section $\sigma$, Friedman considered the moduli spaces of rank $2$ sheaves with odd fiber degree. Without loss
of generality, one can fix the Mukai vector $v$ corresponding to the Chern classes $(2,\sigma-tf,1)$,
where $t$ is a nonnegative integer and $f$ denotes the
fiber. Then Friedman shows that $M_X(v)$ is smooth, projective, $2t$ dimensional and birational to
$\Hilb^t(X)$. Moreover, for $t =1,2$ the birational map is actually an isomorphism. He then conjectures
(\cite[Part III, Conjecture 4.13]{friedellip}) that it is an isomorphism for all $t$.

The point of view of Fourier-Mukai transforms has been introduced into
this problem by Bridgeland \cite{bridgellip}. Consider $X$ and the
relative Jacobian $Y:=J_X(a,b)$, that is the elliptic surface whose
fibers are the moduli spaces of rank $a$, degree $b$ on the corresponding
elliptic fiber of $X$. Bridgeland constructs a derived equivalence
$\Psi: \Db(X) \to \Db(Y)$ as the Fourier-Mukai functor whose kernel is
the relative universal sheaf. He extends the result of Friedman
describing, for any Mukai vector $v$ with rank $r$ and fiber degree coprime,
a birational map between $M_X(v)$ and $\Hilb^t(Y)$, where $2t$ is the
dimension of $M_X(v)$ and $a$ and $b$ in the definition of $Y$ depend on
$v$. Moreover, he shows that these two spaces are isomorphic for small
values of $t$. Remark that this extends the result of Friedman (since $Y
\cong X$ if $X$ has a section) to arbitrary rank, but does not include
the case $t=r=2$.

In this paper, given $X$ and $v$,
we focus our attention to the Fourier-Mukai equivalence $\Psi: \Db(X) \to \Db(\widehat{X})$ (in Bridgeland's notations, $\widehat{X} =
J_X(1,0)$). We consider a Mukai vector $\hat{v}$ on $\widehat{X}$, uniquely determined by $v$ and the induced birational map
$$\varepsilon: M_X(v) \dashrightarrow M_{\widehat{X}}(\hat{v}).$$
We show that $\varepsilon$ is an isomorphism if the dimension $2t$ is smaller than
$2r$ and a Mukai flop if $r \leq t < r+d$. This Mukai flop turns out to be trivial if and only if $t=r=2$.
Finally, we show that, with no restriction on the dimension, the map $\ee$ induces an isomorphism
between the Picard groups of $M_X(v)$ and $M_{\widehat{X}}(\hat{v})$.

The choice of working with a very particular dual elliptic fibration is not restrictive. We construct indeed an algorithm
which recovers Bridgeland's
result by ours in a finite number of steps. Such algorithm is obtained combining the Euclidean algorithm applied to the
pair rank, fiber degree and the Fourier-Mukai transform and, starting with a $2t$-dimensional moduli space of semistable
sheaves whose rank and fiber degree are coprime, it ends with the Hilbert scheme $\Hilb^t(\widehat{X})$.
We call it the \it Euclid-Fourier-Mukai
algorithm. \rm Remark that a version of this algorithm for a single elliptic curve is used in Polishchuk's book
\cite{polishchuk} to recover the Atiyah's classification \cite{Ati} of indecomposable vector bundle on an
elliptic curve. In our case, by means of the Euclid-Fourier-Mukai algorithm, we can show that the bound given
by Bridgeland coincides with ours and is
indeed optimal for $r \geq 3$. Finally we argue how to find a counterexample to Friedman's conjecture
(take $t=4$ and $r=2$). Another important application of the Euclid--Fourier--Mukai algorithm gives
an isomorphism between $\Pic(M_X(v))$ and $\Pic(\Hilb^t(\widehat{X}))$ for all integers $t$.

When $\varepsilon$ is not an isomorphism, our construction allows to describe new compactifications for the moduli
spaces of vector bundles.
One of the good properties of $P$-stability is indeed that, once
we have a fine moduli space $M$ for $P$-stable objects and a triangulated equivalence $\Pi$, we have a fine moduli space
of $\Pi(P)$-stable objects which is isomorphic to $M$. In this paper, we define a $P$-stability
in $\Db(X)$ such that an object $E$ is $P$-stable if and only if it is a semistable torsion free sheaf with
Mukai vector $v$. The (smooth projective irreducible) moduli space of $P$-stable objects is $M_X(v)$,
and it is isomorphic to the moduli space
of $\Psi(P)$-stable objects
in $\Db(\widehat{X})$.

If $t\geq r$, the map $\varepsilon$ is not an isomorphism.
In particular, the moduli space $N_{\widehat{X}}$ of $\Psi(P)$-stable objects is
a smooth irreducible projective variety, isomorphic to $M_X(v)$, containing sheaves with torsion and sharing a dense open subset
with $M_{\widehat{X}}(\hat{v})$. On the other hand, consider the relative quasi-inverse functor $\Phi$
of $\Psi$ and the $P$-stability on $\Db(\widehat{X})$ corresponding to semistability of torsion free sheaves with Mukai vector
$\hat{v}$. Then $\Phi$ induces a birational inverse
of $\varepsilon$. For $t \geq r$, the moduli space $N_X$ of $\Phi(P)$-stable objects on $X$ is isomorphic to $M_{\widehat{X}}(\hat{v})$,
shares with $M_X(v)$ a dense open subset, and contains complexes of length 2. It follows that $N_X$ and $N_{\widehat{X}}$ provide
new compactifications of the moduli space of semistable vector bundles via complexes and torsion sheaves respectively.

The paper is organized as follows: in Section 2, given a smooth projective surface $X \to C$ fibering over a smooth
projective curve, and Chern classes $c_t=(r,c_1,c_2)$ with $r$ coprime to the fiber degree,
we describe the $P$-stability corresponding to the semistability of sheaves with Chern classes $c_t$. The first main ingredient
of this construction is the existence of a polarization $H$ on $X$ with respect to which the semistability of a torsion
free sheaf with Chern classes $c_t$ coincides with the semistability of its restriction to the generic fiber. The second main ingredient
is the description of semistability on a smooth projective curve via orthogonality with respect to some vector bundle.
In section 3, we consider an elliptic K3 surface $X$ and use the $P$-stability to describe an isomorphism between the moduli
spaces, provided that the dimension is smaller. We construct the Euclid-Fourier-Mukai algorithm and
compare our result with Bridgeland's one.
In section 4, we describe
the case where the birational map is a Mukai flop. In section 5, we prove that $\ee$ induces an isomorphism of the Picard groups.

We work exclusively over the complex field $\CC$.   

\subsection*{Acknowledgment}
This work has been supported by the SFB/TR 45
``Periods, moduli spaces and arithmetic of algebraic varieties''.
The authors thank Alina Marian and Dragos Oprea for writing \cite{marian-oprea},
which gave the motivation to study the isomorphism between the Picard groups, and for
their helpful and careful questions. It is a pleasure to thank David Ploog for comments
on an early version of this article.

\section{Postnikov stability data for fibered surfaces}\label{sect:Pstability}

Let $X$ be a smooth projective surface and $\pi: X \to C$ a fibration over a smooth projective curve.
In this section, we show that, once we fix Chern classes $c_t=(r,c_1,c_2)$ such that the fiber degree
of $c_1$ and $r$ are coprime, we can give a $P$-datum with respect to which an object in $\Db(X)$ is
stable if and only if it is a semistable torsion free sheaf with Chern classes $c_t$ and hence fixed Hilbert polynomial.
To do this, we observe that the restriction of a stable torsion free sheaf to the generic fiber has to be stable,
generalizing a result by O'Grady \cite{OG}.
We can even provide an upper bound on the number of fibers where the restriction of a given semistable sheaf can be
unstable. Moreover, semistability on a curve corresponds to orthogonality
to some vector bundle, as shown by Popa \cite{Popa}. The first part of the $P$-stability is then given by an orthogonality
condition with respect to
a finite number of vector bundles each one supported on a single fiber. The second part of the $P$-datum constrains the
object in the derived category to be a torsion free sheaf with fixed Hilbert polynomial.

\subsection{Postnikov stability}
We recall the definition of Postnikov
stability from \cite{HePold,HP}.

\begin{definition}
Let $\kt$ be a  $k$-linear triangulated category, $\{C_i \} _{i=-N \dots M}$ a finite collection
of objects from $\kt$, and $N_i^j$ natural
numbers with $j \in \ZZ$ and ${i=-N \dots M}$, with almost all
$N_i^j$ equal to zero.
We say that an object $E \in \kt$ is {\it Postnikov stable} with respect to
$(C_i,N_i^j)$ if
\begin{enumerate}
\item For all integers $i,j$ we have $\dim_k \Hom(C_i,E[j])=N_i^j$, and
\item there exists a convolution $T_0$ of the objects $C_0,\dots,C_M$
such that $\Hom(T_0,E[j])=0$ for all $j \in \ZZ$.
\end{enumerate}
We will shortly say $P$-datum instead of Postnikov datum.
\end{definition}
The main motivation for considering $P$-stability is that it is preserved by equivalences and
covers the classical semistability of
sheaves. As an example we have following
comparison theorem from \cite{HP}.

\begin{theorem} (\cite[Theorem 10]{HP})
Let $(X,\ko_X(1))$ be a smooth projective variety over $k$, and $p$ be a
Hilbert polynomial. Then there exists a P-datum $(C_i,N_i^j)$ in
$\Db(X)$ such that $E \in \Db(X)$ is P-stable if and only if
\begin{enumerate}
\item the complex $E$ is isomorphic to a sheaf $E_0$,
\item $E_0$ is a pure sheaf of Hilbert polynomial $p$,
\item $E_0$ is slope semistable with respect to $\ko_X(1)$.
\end{enumerate}
\end{theorem}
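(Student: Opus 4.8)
The plan is to assemble the $P$-datum in two blocks. The objects of negative index, $C_{-N},\dots,C_{-1}$, will be twists $\ko_X(-m)$ of the polarization for $m$ ranging over a finite set of sufficiently positive integers; via condition~(1) of $P$-stability, their prescribed $\Hom$-dimensions will force an object $E\in\Db(X)$ to be isomorphic to a coherent sheaf $E_0$ with Hilbert polynomial $p$ (item (1), and the numerical half of item (2)). The objects of non-negative index, $C_0,\dots,C_M$, will be built from finitely many ``test'' vector bundles together with a few further twists; a suitable convolution $T_0$ of them will, via condition~(2) of $P$-stability, be orthogonal to $E_0$ exactly when $E_0$ is pure and slope-semistable (the purity in item (2), and item (3)). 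Each $N_i^j$ is simply \emph{defined} to be $\dim_k\Hom(C_i,F[j])$ evaluated on one fixed pure slope-semistable sheaf $F$ of Hilbert polynomial $p$ (the statement being vacuous if there is none), so that condition~(1) holds tautologically on such $F$; the content is to show the two conditions are \emph{jointly} sufficient.

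\emph{Sheaf-detection block.} Set $D=\dim X$. Using Serre vanishing, fix $m_0$ so large that $H^{>0}(X,F(m))=0$ and $h^0(X,F(m))=p(m)$ for $m\ge m_0$, the bound being taken uniform over the bounded family of semistable sheaves of Hilbert polynomial $p$ via a Castelnuovo--Mumford regularity estimate. Take $C_i=\ko_X(-m)$ for $m=m_0,\dots,m_0+D$, with $N_i^0=p(m)$ and $N_i^j=0$ for $j\ne 0$. Since $\Hom(\ko_X(-m),E[j])=\HH^j(X,E(m))$, the hypercohomology spectral sequence, together with the fact that a non-zero sheaf acquires global sections after a sufficiently positive twist, lets one show, working from the outermost cohomology sheaves of $E$ inwards, that the prescribed vanishing over $D+1$ consecutive twists is incompatible with $\kh^j(E)\ne 0$ for any $j\ne 0$; hence $E\iso E_0$ is a sheaf, and $h^0(E_0(m))=p(m)$ then pins down its Hilbert polynomial as $p$. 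The delicate point is to make $m_0$ depend only on $(X,\ko_X(1),p)$ and not on $E$; this is where the regularity estimate enters.

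\emph{Semistability block.} Assume now $E=E_0$ is a sheaf of Hilbert polynomial $p$. Semistability of such a sheaf with respect to $\ko_X(1)$ can be tested cohomologically: Popa's orthogonality criterion \cite{Popa} does this on curves, and one promotes it to $X$ by restricting to a general complete-intersection curve of large degree (a Flenner-type restriction theorem, in the spirit of O'Grady \cite{OG}) and using boundedness. This yields vector bundles $G_1,\dots,G_s$ on $X$ such that $E_0$ is a pure slope-semistable sheaf if and only if $\Hom(G_\ell,E_0[j])=0$ for all $\ell$ and all $j$; here the $G_\ell$ are chosen negative enough that, for $E_0$ pure semistable, the groups $\Ext^i(G_\ell,E_0)$ with $i>0$ vanish by the uniform regularity bound, while a non-zero homomorphism survives whenever $E_0$ is unstable or impure. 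Let $C_0,\dots,C_M$ comprise the $G_\ell$ and auxiliary twists $\ko_X(-m)$, and build an iterated cone (Postnikov system) $T_0$ of the shifted $C_j$ with associated graded object $\bigoplus_\ell G_\ell[\,\cdot\,]$; feeding the triangles of the tower into the long exact sequences of $\Hom(-,E_0[j])$ shows $\Hom(T_0,E_0[j])=0$ for all $j$ precisely when $\Hom(G_\ell,E_0[j])=0$ for all $\ell,j$, i.e.\ precisely when $E_0$ is pure slope-semistable. For such $E_0$, condition~(1) holds by the choice of the $N_i^j$, closing the equivalence.

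The main obstacle is the semistability block, in two parts: lifting Popa's curve-level orthogonality characterization of semistability to the higher-dimensional, bounded-family setting needed here; and --- the genuine technical core --- constructing the convolution $T_0$ (convolutions need neither exist nor be unique, so the order of the $C_j$ and the connecting morphisms of the Postnikov system must be chosen deliberately) and checking that vanishing of $\Hom(T_0,E_0[j])$ is \emph{equivalent} to, not merely compatible with, orthogonality of every $G_\ell$ to $E_0$. The sheaf-detection block is largely bookkeeping with regularity bounds. Finally, the feature that makes such a datum useful in the rest of the paper --- its preservation under triangulated equivalences $\Pi$, which simply carries $(C_i,N_i^j)$ to $(\Pi(C_i),N_i^j)$ --- is immediate once the datum exists, since conditions~(1)--(2) are phrased entirely via $\Hom$-spaces and cones.
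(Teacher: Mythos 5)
First, a framing point: the paper does not prove this statement at all --- it is imported verbatim as \cite[Theorem 10]{HP}, so there is no internal proof to compare yours against. The closest internal analogue is the paper's own $P$-datum for fibered surfaces (Theorem~\ref{theorem:stable-is-Pstable}), whose architecture --- a block of twists $\ko_X(-m)$ pinning down the Hilbert polynomial via \cite[Proposition 5]{HP}, a \emph{separate} purity condition (Proposition~\ref{purity}), and a Popa-type orthogonality condition --- your sketch broadly mirrors. Your sheaf-detection block is consistent with that and is, as you say, essentially bookkeeping with regularity bounds.

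The semistability block, however, has a genuine gap, sitting exactly at the point you flag as the technical core. You assert that there are finitely many bundles $G_1,\dots,G_s$ such that $E_0$ is pure slope-semistable if and only if $\Hom(G_\ell,E_0[j])=0$ for \emph{all} $\ell$ and all $j$. Orthogonality criteria of Faltings--Popa type are existential, not universal: \cite[Theorem 5.3]{Popa} (Theorem~\ref{thm25} in the paper) says $E$ is semistable iff \emph{there exists} an orthogonal $F$, and for any fixed $F$ with $\chi(F,E_0)=0$ the locus of semistable $E_0$ with $\Hom(F,E_0)\neq 0$ is a theta divisor, generically nonempty in the moduli space. (Indeed, if your $G_\ell$ are chosen so that $\Ext^{>0}(G_\ell,E_0)=0$ on the semistable locus, then $\hom(G_\ell,E_0)=\chi(G_\ell,E_0)$ is a fixed Chern-class number, which cannot be $0$ for every semistable $E_0$ and simultaneously $\neq 0$ for every unstable one.) Consequently your single fixed Postnikov system $T_0$, whose orthogonality to $E_0$ is by construction \emph{equivalent} to orthogonality of every $G_\ell$, would exclude most semistable sheaves. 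The existential quantifier over convolutions in condition (2) of the definition of $P$-stability is there precisely to encode the disjunction ``there exists $\ell$ with $G_\ell\perp E_0$'': the connecting morphisms of the tower must be allowed to vary with $E_0$, which is the content of \cite[Theorem 4]{HePold} and is exactly what the paper invokes in the remark following Theorem~\ref{theorem:stable-is-Pstable} when it faces the same issue for its condition (o). Two smaller points: defining $N_i^j$ by evaluating on one fixed semistable $F$ only works for those $C_i$ where the answer is independent of $F$ (the twists, where it equals $p(m)$, or objects where it is forced to be $0$); and purity is not automatically absorbed into the orthogonality --- in the paper's own analogous construction it needs the separate argument of Proposition~\ref{purity}.
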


Once we know the existence of a fine moduli space for a given $P$-datum, an equivalence of triangulated categories
gives rise to another $P$-datum with isomorphic moduli space.

\begin{proposition}
Let $X$ and $Y$ be smooth projective varieties and $\Phi: \Db(X) \to \Db(Y)$ an equivalence. 
Consider an Hilbert polynomial $p(m)$ on $X$ such that the moduli space $M_X(p(m))$ of semistable sheaves
with Hilbert polynomial $p(m)$ on $X$ is fine. Fix on $X$ the $P$-datum corresponding to the Hilbert polynomial $p(m)$.
Then there exists a fine moduli space of $\Phi(P)$-stable objects on $Y$ isomorphic to $M_X(p(m))$.
\end{proposition}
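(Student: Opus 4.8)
The plan is to transport the entire moduli problem from $X$ to $Y$ along $\Phi$, universal family included. The first step is to verify that $\Phi(P) := (\Phi(C_i), N_i^j)$ is again a $P$-datum and that $E \in \Db(X)$ is $P$-stable if and only if $\Phi(E) \in \Db(Y)$ is $\Phi(P)$-stable. Condition (1) of the definition is immediate from full faithfulness, $\dim_k \Hom(\Phi(C_i), \Phi(E)[j]) = \dim_k \Hom(C_i, E[j]) = N_i^j$. For condition (2), a triangulated equivalence carries a convolution $T_0$ of $C_0, \dots, C_M$ to a convolution $\Phi(T_0)$ of $\Phi(C_0), \dots, \Phi(C_M)$, with $\Hom(\Phi(T_0), \Phi(E)[j]) \iso \Hom(T_0, E[j]) = 0$; conversely, since $\Phi$ is essentially surjective and its quasi-inverse is again triangulated, every convolution of the $\Phi(C_i)$ arises this way. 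Hence the two notions of stability correspond under $\Phi$.

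Next I would set up the relative version of the equivalence. Let $\mathcal{P} \in \Db(X \times Y)$ be a kernel for $\Phi$ and $\mathcal{Q} \in \Db(Y \times X)$ a kernel for its quasi-inverse $\Psi$. For any scheme $T$ one obtains a functor
$$\Phi_T := \R p_{TY *}\big( p_{TX}^* (-) \lotimes p_{XY}^* \mathcal{P}\big) : \Db(T \times X) \lto \Db(T \times Y),$$
and likewise $\Psi_T$; these are mutually quasi-inverse equivalences and commute with base change along any $T' \to T$, so in particular $\eL i_t^* \Phi_T(\mathcal{F}) \iso \Phi(\eL i_t^* \mathcal{F})$ for every point $t \in T$. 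Declaring a flat family of $\Phi(P)$-stable objects over $T$ to be an $\mathcal{F} \in \Db(T \times Y)$ whose derived restriction to each fibre is $\Phi(P)$-stable (and symmetrically on $X$), the base-change identity together with the first step shows that $\Phi_T$ and $\Psi_T$ induce mutually inverse bijections between the flat families over $T$ on the two sides, compatibly with pullback in $T$. Thus the moduli functor of $\Phi(P)$-stable objects on $Y$ is isomorphic to that of $P$-stable objects on $X$.

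Finally, by hypothesis --- and by the comparison theorem quoted above --- the $P$-stable objects of $\Db(X)$ are precisely the semistable sheaves parametrized by the fine moduli space $M_X(p(m))$, which therefore carries a universal sheaf $\mathcal{U}$ on $M_X(p(m)) \times X$, a flat family of $P$-stable objects. Then $\Phi_{M_X(p(m))}(\mathcal{U})$ is a flat family of $\Phi(P)$-stable objects on $Y$ over $M_X(p(m))$, and by the previous step it is universal: an arbitrary flat family over $T$ comes from a unique morphism $T \to M_X(p(m))$, namely the one classifying $\Psi_T$ of that family. Hence the functor of $\Phi(P)$-stable objects on $Y$ is represented by $(M_X(p(m)), \Phi_{M_X(p(m))}(\mathcal{U}))$, giving the desired fine moduli space isomorphic to $M_X(p(m))$.

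The one genuinely delicate point I expect is pinning down the notion of a flat family of complexes so that ``$\eL i_t^* \mathcal{F}$ is $\Phi(P)$-stable for all $t$'' is the correct condition and is preserved in both directions by $\Phi_T$ --- in effect, checking that $\Phi_T$ introduces no fibrewise unboundedness nor cohomology sheaves outside the sheaf locus. Since $\Phi_T$ and $\Psi_T$ are exact and commute with base change, this reduces to the standard base-change formula for Fourier--Mukai kernels, so the obstacle is one of careful bookkeeping rather than of new ideas.
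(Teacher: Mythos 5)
Your proof is correct and follows essentially the same route as the paper, whose own argument consists of exactly your two key observations — that $P$-stability transports along the equivalence and that the image of the universal sheaf under the relative Fourier--Mukai transform is the universal object on $Y$ — stated without the details you supply. The extra care you take with convolutions, base change, and the notion of flat family of complexes is a legitimate filling-in of what the paper leaves implicit.
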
\label{prop:phi-is-iso-modspace}
\begin{proof}
It is clear that $E$ in $\Db(X)$ is $P$-stable if and only if $\Phi(E)$ is $\Phi(P)$-stable.
If $\ke$ is the universal sheaf on $M_X(p(m)) \times X$, then $\Phi(\ke)$ is the universal object for moduli on $Y$.
\end{proof}

\subsection{Generalizing a result of O'Grady}
Let $f$ denote the class of a smooth fiber of $\pi: X \to C$.

\begin{definition}\label{Def:discriminant}
Let $E$ be a vector bundle on $X$ of rank $r$
with Chern roots $\{\alpha_i\}_{i=1,\dots,r}$. The \it discriminant \rm $\Delta(E)$ of $E$
is defined as follows
\[ \Delta(E) := \sum_{1 \leq i < j \leq r} (\alpha_i-\alpha_j)^2 = 
(r-1)c_1^2(E)-2rc_2(E) \, .  \]
\end{definition}

We included the definition because many authors (as for example
\cite{huylehn}) use other scaling constants for $\Delta(E)$. For example
O'Grady includes in his definition of $\Delta$ the factor
$\frac{-1}{2r}$ (see Section IV. in \cite{OG}). With our definition we
have $\Delta(E) = \ch_2(E \otimes E\dual) =-c_2(E \otimes E\dual)$.
The Bogomolov inequality says that when $E$ is
semistable with respect to some polarization, then $\Delta(E) \leq 0$.

\begin{lemma}\label{lemma22}(Compare with Lemma IV.1 in \cite{OG})
Let $X$ be a smooth projective surface,
$0 \to E' \to E \to E'' \to 0$ be a short exact sequence of semistable
coherent sheaves of the same slope with respect to some polarization on
$X$. Let $A= r'c_1(E)-rc_1(E')$ where $r'$ and $r$ are the ranks of $E'$
and $E$ respectively. Then we have an inequality
\[ \frac{r^2}{4}\Delta(E) \leq A^2 \, . \]
\end{lemma}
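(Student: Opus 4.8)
The plan is to combine three ingredients: the Bogomolov inequality for the semistable sheaves $E'$ and $E''$, a purely numerical identity relating the discriminant of an extension to the discriminants of its sub- and quotient sheaves, and the Hodge index theorem to pin down the sign of $A^2$.

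First I would record the discriminant identity. Put $r=r'+r''$ and use the Whitney formula $c_1(E)=c_1(E')+c_1(E'')$, $c_2(E)=c_2(E')+c_1(E')c_1(E'')+c_2(E'')$. Substituting into Definition~\ref{Def:discriminant} and collecting terms one obtains
\[ r'r''\,\Delta(E) \;=\; rr''\,\Delta(E') + rr'\,\Delta(E'') + A^2 , \]
where along the way one checks the identity $A = r'c_1(E)-rc_1(E') = r'c_1(E'')-r''c_1(E')$, so that the quadratic remainder of the computation is precisely $A^2=(r''c_1(E')-r'c_1(E''))^2$. This is the only computation in the argument and it is routine; keeping track of the ranks and signs here is the single point that requires a bit of care.

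Next, $E'$ and $E''$ are, by hypothesis, semistable of finite slope, hence torsion free, so the Bogomolov inequality gives $\Delta(E')\le 0$ and $\Delta(E'')\le 0$. Since $r,r',r''>0$, the identity immediately yields $r'r''\,\Delta(E)\le A^2$. Now I would bring in the polarization $H$: because $E'$ and $E$ have the same slope, $\tfrac{c_1(E')\cdot H}{r'}=\tfrac{c_1(E)\cdot H}{r}$, i.e.\ $A\cdot H=0$; as $H^2>0$, the Hodge index theorem forces $A^2\le 0$. In particular $\Delta(E)\le A^2/(r'r'')\le 0$.

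Finally, from $\Delta(E)\le 0$ and the arithmetic--geometric inequality $r'r''\le\bigl(\tfrac{r'+r''}{2}\bigr)^2=\tfrac{r^2}{4}$ we get $\tfrac{r^2}{4}\,\Delta(E)\le r'r''\,\Delta(E)$, and combining this with $r'r''\,\Delta(E)\le A^2$ gives $\tfrac{r^2}{4}\,\Delta(E)\le A^2$, as claimed (the degenerate cases $E'=0$ or $E''=0$ being trivial). The conceptual content is that the ``same slope'' hypothesis makes $A$ orthogonal to the polarization, so that $A^2\le 0$ and the comparison between $r'r''$ and $r^2/4$ can be used with the correct sign; the discriminant identity and the Bogomolov inequality are the standard inputs, and semistability of $E$ itself is not actually needed.
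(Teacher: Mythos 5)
Your proof is correct and follows essentially the same route as the paper: Bogomolov for $E'$ and $E''$ plus the Whitney formula yield $r'r''\,\Delta(E)\le A^2$ (your discriminant identity is just the paper's substitution computation packaged more cleanly), and then $\Delta(E)\le 0$ together with $r'r''\le r^2/4$ finishes. The only divergence is that the paper obtains $\Delta(E)\le 0$ directly from the Bogomolov inequality applied to the semistable sheaf $E$, whereas you derive it from $A\cdot H=0$ and the Hodge index theorem --- a valid (and slightly more economical) alternative that indeed avoids using the semistability of $E$ itself.
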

\begin{proof}
The semistability of $E'$ and $E''$ yields
$c_2(E') \geq \frac{r'-1}{2r'}c_1^2(E')$, and 
$c_2(E'') \geq \frac{r-r'-1}{2(r-r')}c_1^2(E'')$.
The short exact sequence yields two equations
\[ c_1(E) = c_1(E')+c_1(E''), \quad \mbox{ and} \quad
c_2(E) = c_1(E')c_1(E'')+c_2(E') +c_2(E'') \,.  \]
The two inequalities give now
\[c_2(E) \geq  c_1(E')c_1(E'') + \frac{r'-1}{2r'}c_1^2(E') +
\frac{r-r'-1}{2(r-r')}c_1^2(E'') \, .\]
Now we express $c_1(E'')$ as $c_1(E)-c_1(E')$, and replace $c_1(E')$ by
$\frac{1}{r}(r'c_1(E)-A)$ to obtain
\[ 2rc_2(E) \geq (r-1)c_1^2(E)
- \frac{1}{r'(r-r')} A^2 \, .  \]
Having in mind that $\Delta(E) \leq  0$, and that $r'\in \{
1,2,\dots,r-1\}$ we obtain the stated inequality.
\end{proof}

\begin{lemma}\label{lemma23}
Let $(X,H)$ be a smooth polarized surface, $f$ and $A$ divisors on
$X$ such that $f^2=0$, $H.A=0$, and $H.f>0$. Then we have $A^2 \leq \displaystyle -\frac{(A.f)^2H^2}{(H.f)^2}$.
\end{lemma}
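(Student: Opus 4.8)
The plan is to use the Hodge index theorem together with the orthogonality hypothesis $H.A=0$. Since $H$ is ample, the intersection form on the Néron–Severi group (tensored with $\RR$) has signature $(1, \rho-1)$, and $H^2>0$ spans the positive part. Therefore the form is negative semidefinite on the hyperplane $H^\perp := \{D : H.D=0\}$, which contains $A$ by assumption. In particular $A^2 \le 0$ already, but we need the sharper bound involving $f$.

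First I would consider the two–dimensional subspace $V \subseteq \NS(X)\otimes\RR$ spanned by $H$ and $f$; note $V$ is genuinely two–dimensional because $H.f>0$ while $H^2>0$ forces $H$ and $f$ to be independent (if $f=\lambda H$ then $f^2=\lambda^2 H^2 \ne 0$, contradicting $f^2=0$). On $V$ the Gram matrix in the basis $(H,f)$ is $\begin{pmatrix} H^2 & H.f \\ H.f & 0\end{pmatrix}$, which has determinant $-(H.f)^2<0$, hence signature $(1,1)$; so $V$ meets the positive cone, and its orthogonal complement $V^\perp$ inside $\NS(X)\otimes\RR$ carries a negative semidefinite form. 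The key step is then to project $A$ orthogonally onto $V$: write $A = A_0 + A_1$ with $A_0 \in V$ and $A_1 \in V^\perp$. Since the form is negative semidefinite on $V^\perp$ we get $A_1^2 \le 0$, and by orthogonality $A^2 = A_0^2 + A_1^2 \le A_0^2$.

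It remains to compute $A_0^2$. Writing $A_0 = aH + bf$ and imposing $H.A_0 = H.A = 0$ and $f.A_0 = f.A = A.f$ gives the linear system $aH^2 + b(H.f) = 0$ and $a(H.f) = A.f$, so $a = (A.f)/(H.f)$ and $b = -a H^2/(H.f)$. Then
\[
A_0^2 = a^2 H^2 + 2ab(H.f) = a^2 H^2 - 2a^2 H^2 = -a^2 H^2 = -\frac{(A.f)^2 H^2}{(H.f)^2},
\]
which yields the claimed inequality $A^2 \le A_0^2 = -\dfrac{(A.f)^2 H^2}{(H.f)^2}$.

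The only thing requiring care — the "main obstacle," though a mild one — is the correct sign bookkeeping in the Hodge index argument: one must be sure that the negative semidefiniteness applies to $V^\perp$ (which uses that $V$ contains a positive vector, i.e. $\det$ of the Gram matrix of $V$ is negative) rather than merely to $H^\perp$, since the bound we want is strictly stronger than $A^2\le 0$. Once the two–dimensional subspace $V$ is chosen and its signature identified, the rest is the short explicit computation above.
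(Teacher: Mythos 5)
Your proof is correct. It rests on the same underlying fact as the paper's argument (the Hodge index theorem), but the execution is genuinely different. The paper applies the two-divisor form of the index inequality, $H^2D^2\le (H.D)^2$, to the one-parameter family $D=A+\lambda f$; using $H.A=0$ and $f^2=0$ this says that the quadratic $(H.f)^2\lambda^2-2H^2(A.f)\lambda-H^2A^2$ is nonnegative for all $\lambda$, and the conclusion drops out of the nonpositivity of its discriminant. You instead invoke the signature statement directly: you isolate the hyperbolic plane $V=\langle H,f\rangle$ (correctly checking that its Gram determinant $-(H.f)^2$ is nonzero, so the orthogonal decomposition $A=A_0+A_1$ exists and $V^\perp$ is negative semidefinite), and then compute $A_0^2$ explicitly. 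The two arguments are essentially dual to one another --- the discriminant computation is the ``optimize over $\lambda$'' version of your projection --- so they carry the same content; the paper's version is shorter and avoids naming $\NS(X)\otimes\RR$, while yours makes transparent where the exact constant $-(A.f)^2H^2/(H.f)^2$ comes from (it is precisely $A_0^2$) and when equality holds (when $A\in V$ modulo the radical of $V^\perp$). Your sign bookkeeping and the linear system determining $a,b$ are all correct.
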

\begin{proof}
We consider the Hodge index theorem for the two line bundles $H$ and $(A+
\lambda f)$ and obtain $H^2(A+\lambda f)^2 \leq (H.A+\lambda H.f)^2$.
Thus, the quadratic equation $(H.f)^2 \lambda^2 -2 H^2 (A.f) \lambda
-H^2A^2 =0$ in $\lambda$ has at most one
rational solution. Considering the discriminant of this quadratic
equation, we obtain the stated inequality.
\end{proof}

Let us fix Chern classes  $c_t = (r,c_1,c_2)$ such that $f.c_1$ is coprime with $r$.
The following result is a generalization of O'Grady's result
\cite[Proposition I.1.6]{OG} and the proof follows almost literally the original one.

\begin{proposition}\label{prop24}
Let $E$ be a torsion free coherent sheaf $E$ with Chern classes $c_t$ and set $h:=H.f$,
and $N_0:= -\frac{r^2\Delta(E)h}{8}$. For any $N \geq N_0$ the following facts are equivalent:
\begin{enumerate}
\item $E$ is slope stable with respect to $H+N\cdot f$.
\item $E$ is slope semistable with respect to $H+N\cdot f$.
\item $E$ is slope stable with respect to $H+N_0\cdot f$.
\item $E$ is slope semistable with respect to $H+N_0\cdot f$.
\item The restriction of $E$ to a general fiber of $\pi$ is stable. 
\end{enumerate}
\end{proposition}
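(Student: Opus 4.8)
The idea is to reduce the five–fold equivalence to two assertions, each to be proved for every $N\ge N_0$:
\emph{(a)} $(H+Nf)$–semistability of $E$ forces $(H+Nf)$–stability; and
\emph{(b)} $E$ is $(H+Nf)$–stable if and only if the restriction of $E$ to a general fibre of $\pi$ is stable.
Granting \emph{(a)} and \emph{(b)} the proposition follows formally: (1)$\Rightarrow$(2) and (3)$\Rightarrow$(4) are trivial, \emph{(a)} applied at $N$ and at $N_0$ gives (2)$\Rightarrow$(1) and (4)$\Rightarrow$(3), and \emph{(b)} applied at $N$ and at $N_0$ links (5) to (1) and to (3). Note that on a general fibre ``stable'' and ``semistable'' agree: a subsheaf of rank $r'$ with the same slope as $E|_{X_t}$ would satisfy $r\mid r'(f.c_1)$, hence $r\mid r'$, impossible for $0<r'<r$ by coprimality of $r$ and $f.c_1$. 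The same coprimality will be used on $X$ to turn weak inequalities between $f$–slopes into strict ones with an integral gap at least $\tfrac1{rr'}$.

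For \emph{(a)}: assume $E$ is $(H+Nf)$–semistable but not stable, and choose a saturated $F\subsetneq E$ of rank $r'$ ($0<r'<r$) of the same $(H+Nf)$–slope as $E$, with $F$ and $E/F$ both $(H+Nf)$–semistable (for instance $F$ of minimal rank among subsheaves of slope $\mu_{H+Nf}(E)$, so $F$ is stable and $E/F$ is semistable of the same slope). Put $A=r'c_1(E)-rc_1(F)$, so $A.(H+Nf)=0$. The Bogomolov inequality gives $\Delta(E)\le 0$, hence $N_0\ge 0$. Lemma~\ref{lemma22}, applied with the ample polarization $H+Nf$, yields $\tfrac{r^2}{4}\Delta(E)\le A^2$; Lemma~\ref{lemma23}, applied with $H$ replaced by $H+Nf$ (note $(H+Nf).f=h>0$ and $f^2=0$), yields $A^2\le -(A.f)^2(H+Nf)^2/h^2$. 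Combining, $(A.f)^2(H+Nf)^2\le -\tfrac{r^2h^2}{4}\Delta(E)=2N_0h$, while $(H+Nf)^2=H^2+2Nh\ge H^2+2N_0h>2N_0h$ since $H^2>0$. Hence $(A.f)^2<1$, i.e.\ $A.f=0$, so $r'c_1(E).f=rc_1(F).f$ and $r\mid r'$ --- a contradiction.

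For \emph{(b)}: by \emph{(a)} and the remark above it suffices to show that $E$ is $(H+Nf)$–semistable if and only if $E|_{X_t}$ is semistable for general $t$. Both directions compare a saturated subsheaf $F\subset E$ of rank $r'$ with $E$. In the direction ``$\Leftarrow$'' one takes an $(H+Nf)$–destabilising $F$ of maximal $(H+Nf)$–slope, so that $F$ is $(H+Nf)$–semistable; since $F|_{X_t}\subsetneq E|_{X_t}$ and $E|_{X_t}$ is semistable, coprimality makes the fibre inequality strict, i.e.\ $A.f\ge1$ for $A=r'c_1(E)-rc_1(F)$, and the destabilising inequality $A.(H+Nf)\le 0$ gives $A.H\le -NA.f\le -N$. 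In the direction ``$\Rightarrow$'', if $E|_{X_t}$ is not semistable for general $t$, the relative Harder--Narasimhan filtration over a dense open subset of the base produces $F$ with $F|_{X_t}$ semistable and $A.f\le -1$, and $(H+Nf)$–semistability of $E$ gives $A.H\ge -NA.f\ge N$. In both cases one reaches a contradiction by bounding $|A.H|$ strictly below $N_0$, exactly as in O'Grady's argument: one uses the discriminant–additivity identity $\tfrac{\Delta(E)}{r}=\tfrac{\Delta(F)}{r'}+\tfrac{\Delta(E/F)}{r-r'}+\tfrac{A^2}{rr'(r-r')}$, the Bogomolov inequality applied to the $(H+Nf)$–semistable Harder--Narasimhan factors of $F$ and of $E/F$, and the Hodge index theorem in the form of the computation in Lemma~\ref{lemma23} (now without the hypothesis $A.H=0$: it bounds $A^2$ from below and then $A.H$ from $A^2$ and $A.f$). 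The value $N_0=-r^2\Delta(E)h/8$ is precisely what makes this geometric bound meet the threshold.

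The main obstacle is this last estimate of part \emph{(b)}: assembling the discriminant–additivity identity, Lemmas~\ref{lemma22}--\ref{lemma23} and the Bogomolov inequality so that the bound on $|A.H|$ comes out just below $N_0$, while carrying along the whole Harder--Narasimhan filtration of $E/F$; the analogous estimate for part \emph{(a)} is clean only because there $A.H=0$. A routine but necessary technical point throughout is to replace $E$ and its subsheaves by their reflexive hulls and saturations --- reflexive sheaves on the surface $X$ are locally free, and these operations do not decrease the discriminant --- so that restrictions to a general fibre are subbundles with the expected Chern numbers. Everything else is bookkeeping with slopes.
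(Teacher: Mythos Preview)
Your part \emph{(a)} is exactly the paper's key sublemma (there stated as: for every $N'\ge N_0$ the sheaf $E$ cannot be strictly $(H+N'f)$-semistable), and your proof of it via Lemmas~\ref{lemma22} and~\ref{lemma23} is the same as the paper's.

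The divergence is in part \emph{(b)}. You attempt to bound $|A.H|$ directly, invoking discriminant--additivity, Bogomolov on the Harder--Narasimhan factors of $F$ and $E/F$, and a variant of Lemma~\ref{lemma23} without the hypothesis $A.H=0$; you yourself flag this as ``the main obstacle'' and do not carry it out. The paper bypasses all of this with a wall-crossing argument that reduces \emph{(b)} to \emph{(a)}: if $E$ is $(H+N_1f)$-stable and $(H+N_2f)$-unstable for two values $N_1,N_2\ge N_0$, then by continuity of slopes in the parameter $N'$ there is an intermediate $N'$ at which $E$ is strictly semistable, contradicting \emph{(a)}. This already gives $(1)\Leftrightarrow(3)$. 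For $(1)\Leftrightarrow(5)$ the same trick works with $\tilde N\gg N$: if the generic fibre restriction is unstable, the relative HN subsheaf destabilises $E$ for $\tilde N$ large, so $E$ must be strictly semistable at some $N'\in(N,\tilde N)$; conversely, if the generic fibre restriction is stable, every saturated subsheaf has $A.f\ge 1$, hence fails to destabilise for $\tilde N$ large, and again a strictly semistable wall would have to be crossed. No estimate on $|A.H|$ is ever needed.

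Your direct route in \emph{(b)} is not obviously wrong, but it is genuinely harder and, as written, incomplete: in the ``$\Rightarrow$'' direction neither $F$ nor $E/F$ is known to be semistable for any polarisation, so Lemma~\ref{lemma22} does not apply as stated; refining to the full HN filtration introduces several cross-terms $A_{ij}$ whose contributions must be summed and shown to stay below the specific threshold $N_0=-r^2\Delta(E)h/8$. The wall-crossing reduction makes this unnecessary and is the idea you are missing.
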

\begin{proof}
The main ingredient of the proof is the following Lemma.
\begin{lemma}\label{$*$}
For any $N' \geq N_0$, the sheaf $E$ cannot be strictly semistable
with respect to $H+N' \cdot f$.
\end{lemma}
\begin{proof}
Suppose $E$ is strictly semistable, that is there
exists a semistable subsheaf $E' \subset E$ of the same slope. Let $r'$
be the rank of $E'$, and set $A=r'c_1(E)-rc_1(E')$. We have by
Lemma \ref{lemma22} the inequality $\frac{r^2\Delta(E)}{4} \leq A^2$.
Applying Lemma
\ref{lemma23} to the polarization $H+N'\cdot f$ we obtain $A^2 \leq -
\frac{(H+N'\cdot f)^2}{h^2}$. Here we used that $r$ and $f.c_1(E)$ are
coprime which implies $A.f \ne 0$. Our choice of $N' \geq N_0$
yields $(H+N'\cdot f)^2 > 2 N_0 H.f$. We conclude that
$\frac{r^2\Delta(E)}{4} > -\frac{(H+N\cdot f)^2}{h^2}$.
This means that the above two inequalities for $A^2$ cannot be satisfied
simultaneously and proves the Lemma.
\end{proof}
\noindent Now
(1)$\implies$(2) and (3)$\implies$(4) are obvious, while
(2)$\implies$(1) and (4)$\implies$(3) follow immediately from Lemma \ref{$*$}.

(3)$\implies$(2). Suppose that $E$ is stable with respect to $H+N_0 \cdot
f$ and unstable with respect to $H+N\cdot f$. Then there exists a $N'
\in (N_0,N)$ such that $E$ is strictly semistable with respect to
$H+N'\cdot f$. This contradicts Lemma \ref{$*$}. We prove
(1)$\implies$(4) following the same pattern.

(1)$\implies$(5). We denote the fiber over $c \in C$ by $X_c$. If the
restriction $E_c:=E|_{X_c}$ is unstable for all $c \in C$,
there is a short exact sequence
$0 \to E' \to E \to E'' \to 0$ such that for almost all $c \in C$ 
we have  a destabilizing sub-bundle ${E'}_c \subset E_c$.
Setting $A:=\rk(E')c_1(E)-rc_1(E')$ we have therefore $f.A <0$.
Then for $\tilde N \gg N$ the subsheaf $E'$ is destabilizing with respect
to $H+\tilde N \cdot f$.
As before we deduce the existence of an $N' \in (N, \tilde N)$ such
that $E$ is strictly semistable with respect to $H+N'\cdot f$.
This contradicts Lemma \ref{$*$}. For the same reasons, (5)$\implies$(1).
\end{proof}

\begin{definition}\label{Def:fiberlike-polarization}
Fix Chern classes $c_t=(r, c_1, c_2)$ such that $c_1.f$ and $r$ are coprime.
A polarization $H$ on $X$ is \it almost $c_t$-fiberlike \rm if
for all torsion free coherent sheaves $E$ with Chern classes $c_t$ we have
\[ E \mbox{ is stable }\iff E \mbox{ is semistable }
\iff E|_{X_c} \mbox{ is stable for a  point } c \in C \, .\]
\end{definition}
The existence of an almost $c_t$-fiberlike polarization is shown in Proposition \ref{prop24}.

\subsection{The orthogonality condition}\label{subsect23}
We denote the genus of $f$ by $g$.
For any closed point $c \in C$, write $X_c$ for the fiber of
$\pi$ over $c$, and $\iota_c:X_c \to X$ for the embedding of $X_c$ into
$X$. Fix Chern classes $c_t$ and
an almost $c_t$-fiberlike polarization $H$ on $X$. We construct a family of vector bundles
such that any torsion free coherent sheaf of Chern classes $c_t$ is orthogonal
to some bundle of the family if and only if it is stable with respect to the
fixed polarization.

Recall first that the stability of coherent sheaves on smooth projective curves is
equivalent to the existence of orthogonal sheaves.
\begin{theorem}\label{thm25}(cf. \cite[Theorem 5.3]{Popa})
For a smooth projective curve $Y$ of genus $g$ and a coherent sheaf $E$
on $Y$
of rank $r$ and degree $d$ we have equivalent conditions
\begin{enumerate}
\item $E$ is a semistable vector bundle,
\item there exist a sheaf $F \ne 0$ such that $\Ext^i(F,E)=0$ for all
$i \in \ZZ$, and
\item for any line bundle $L$ of degree $r^2(\frac{d}{r}-(g-1))$ there exists a
vector bundle $F$ of rank $r^2$ and $\det(F) \cong L$ such that
$\Ext^i(F,E)=0$ for all $i \in \ZZ$.
\end{enumerate}
\end{theorem}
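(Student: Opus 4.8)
The plan is to prove the cycle of implications $(3)\Rightarrow(2)\Rightarrow(1)\Rightarrow(3)$. The implication $(3)\Rightarrow(2)$ is immediate — pick any line bundle $L$ of the prescribed degree and take the bundle $F$ it provides — so all the work is in the other two, and the real content lies in $(1)\Rightarrow(3)$, which is the curve case of Faltings' cohomological characterisation of semistability, in the sharp form due to \cite{Popa}.

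For $(2)\Rightarrow(1)$ I would argue as follows. On a smooth projective curve $\Ext^i(F,E)$ vanishes automatically for $i\neq0,1$, so condition $(2)$ says exactly $\Hom(F,E)=\Ext^1(F,E)=0$ for some $F\neq0$, hence $\chi(F,E)=0$. A short Euler-characteristic argument then forces $F$ to have positive rank (a torsion $F$ would give $\chi(F,E)=-r\cdot\operatorname{length}(F)<0$), then $E$ to be torsion free (a torsion subsheaf $T\subseteq E$ would give $\Hom(F,T)\hookrightarrow\Hom(F,E)=0$, whence $\chi(F,T)\leq0$, contradicting $\chi(F,T)=\rk(F)\cdot\operatorname{length}(T)>0$), and finally $F$ torsion free as well. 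So $E$ is a vector bundle, and Riemann--Roch on $Y$ gives
\[
\chi(F,E)=\rk(F)\,\rk(E)\,\bigl(\mu(E)-\mu(F)-(g-1)\bigr),
\]
so $\chi(F,E)=0$ forces $\mu(F)=\mu(E)-(g-1)$. If $E$ were not semistable, let $E_1\subsetneq E$ be its maximal destabilising subsheaf, so $\mu(E_1)>\mu(E)$; then $\chi(F,E_1)=\rk(F)\,\rk(E_1)\,\bigl(\mu(E_1)-\mu(E)\bigr)>0$, so $\Hom(F,E_1)\neq0$, and composing with $E_1\hookrightarrow E$ contradicts $\Hom(F,E)=0$. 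Hence $E$ is a semistable vector bundle.

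To prove $(1)\Rightarrow(3)$ I would set things up on the moduli side. The degree prescribed for $L$ is exactly the one for which $\mu(F)=\mu(E)-(g-1)$, hence $\chi(F,E)=0$ for \emph{every} vector bundle $F$ of rank $r^2$ with $\det F\iso L$. Fix such an $L$, let $\km=\km_Y(r^2,L)$ be the (irreducible, projective) moduli space of semistable bundles of rank $r^2$ and determinant $L$, and consider
\[
\Theta_E:=\{\,F\in\km\st\Hom(F,E)\neq0\,\}=\{\,F\in\km\st\Ext^1(F,E)\neq0\,\}\subseteq\km.
\]
This locus is closed by upper semicontinuity, and since $\chi(\,\cdot\,,E)$ vanishes identically on $\km$ it is cut out by a determinant of cohomology, hence is either all of $\km$ or an effective divisor. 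So it suffices to show $\Theta_E\neq\km$: then the general member of $\km$ is orthogonal to $E$ and witnesses $(3)$. (Consistently, for non-semistable $E$ one has $\Theta_E=\km$.)

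The hard part is exactly establishing $\Theta_E\subsetneq\km$, and this is where the semistability of $E$ must be used. Following \cite{Popa} (and Faltings) I would estimate the dimension of the parameter space of non-zero maps $\varphi\colon F\to E$ with $F\in\km$: for $F$ and $E$ both semistable the image $\varphi(F)$ is trapped between the slopes $\mu(E)-(g-1)$ and $\mu(E)$, so it ranges over a bounded family, and for each possible image the bundles $F$ of rank $r^2$ and determinant $L$ surjecting onto a subsheaf of it form a family of controlled dimension; summing up, the $F\in\km$ admitting such a map sweep out a subvariety of dimension strictly less than $\dim\km$. Hence $\Theta_E$ is a proper subvariety. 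The choice of rank $r^2$ is precisely what leaves enough room to prescribe an arbitrary determinant. This dimension estimate — equivalently, the non-triviality of the generalised theta divisor of a semistable bundle — is the genuine obstacle; everything else (Riemann--Roch, Harder--Narasimhan, semicontinuity, and the determinant-of-cohomology formalism) is formal.
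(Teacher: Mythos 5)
The paper offers no proof of this statement — it is imported verbatim from \cite{Popa} (Theorem 5.3) — so there is no argument of the authors' own to compare yours against. Your outline is the standard Faltings--Popa proof and is correct: $(3)\Rightarrow(2)$ and $(2)\Rightarrow(1)$ are complete as written, and the one step you leave as a sketch — the properness of $\Theta_E$ in $\km$, i.e.\ the nontriviality of the generalised theta divisor of a semistable $E$ at the critical slope $\mu(F)=\mu(E)-(g-1)$ — is precisely the content of the cited theorem, so deferring it to \cite{Popa} is exactly what the paper itself does.
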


\begin{remark}\label{rem26}
If in the above Theorem the genus of $Y$ is one, then the sheaf $F$ in
(2) can be chosen of rank $r$ and degree $d$. This is a consequence of
Atiyah's classification \cite{Ati} of vector bundles on elliptic curves.
\end{remark}

\begin{definition}\label{Def:orthog-condition}
We define the \it orthogonality
condition \rm $(r,d)^\bot$ for objects $E \in \Db(X)$
of the bounded derived category of $X$ as follows:
\[ E \in \Db(X) \mbox{ is } (r,d)^\bot \stackrel{\mbox{\tiny Def}}{\iff}
\left\{
\begin{array}{l} \mbox{there exists a point } c \in C
\mbox{ and a coherent sheaf }\\
F \mbox{ on } X_c \mbox{ with } \rk(F)=r, \, \deg(F)=d
\mbox{ such that}\\
\Hom(\iota_{c*} F,E[j])=0 \mbox{ for all } j \in \ZZ .
\end{array} \right\}
\]
\end{definition}
For a coherent torsion free sheaf $E$ the orthogonality condition
$(r,d)^\bot$ is by Theorem \ref{thm25} equivalent to the stability of
$E$ restricted to a general fiber of $\pi$. by Proposition
\ref{prop24}, this is equivalent to the stability of $E$.
Resuming, we obtain the following Proposition.

\begin{proposition}\label{ortho}
For a torsion free sheaf $E$ with Chern classes $c_t$ and an almost $c_t$-fiberlike
polarization $H$, we have an equivalence
\begin{enumerate}
\item $E$ is $(r^2,r^2(\frac{d}{r}-(g-1)))^\bot$,
\item the restriction of $E$ to the general fiber of $\pi$ is stable,
\item $E$ is slope stable with respect to $H$, and
\item $E$ is slope semistable with respect to $H$.
\end{enumerate}
\end{proposition}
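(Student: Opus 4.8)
The plan is to chain together the results already established in this section. The equivalences $(2)\iff(3)\iff(4)$ are exactly the content of Proposition \ref{prop24}, where parts $(1)$ and $(2)$ there correspond to stability and semistability with respect to $H+N_0\cdot f$; since $H$ is assumed to be almost $c_t$-fiberlike, we may as well take $N_0=0$ in that statement, so that $(3)$ and $(4)$ of the present proposition coincide with parts $(1)$, $(2)$ of Proposition \ref{prop24}, and part $(5)$ there gives the equivalence with $(2)$. Thus the only new implication to check is $(1)\iff(2)$, i.e.\ that the orthogonality condition $(r^2,r^2(\tfrac{d}{r}-(g-1)))^\bot$ for $E$ is equivalent to the stability of the restriction $E|_{X_c}$ to a general fiber.

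For this I would argue as follows. Suppose first that $E|_{X_c}$ is stable for general $c\in C$. Fix such a $c$; then $E|_{X_c}$ is in particular a semistable vector bundle on the curve $X_c$ of genus $g$, of rank $r$ and degree $d$ (here $d = c_1.f$). By Theorem \ref{thm25}, applied to $Y=X_c$ and the line bundle $L$ of degree $r^2(\tfrac{d}{r}-(g-1))$, there is a vector bundle $F$ on $X_c$ of rank $r^2$ with $\Ext^i_{X_c}(F,E|_{X_c})=0$ for all $i$. Pushing forward along the closed immersion $\iota_c\colon X_c\hookrightarrow X$ and using adjunction — $\R\Hom_X(\iota_{c*}F,E)\cong \R\Hom_{X_c}(F,\iota_c^!E)$, or more directly the projection-formula computation of $\R\Hom_X(\iota_{c*}F,E)$ in terms of $\R\Hom_{X_c}(F, E|_{X_c})$ together with the conormal bundle of the divisor $X_c$, which is trivial since $f^2=0$ so that $X_c$ moves in a base-point-free pencil — one gets $\Hom_X(\iota_{c*}F, E[j])=0$ for all $j\in\ZZ$. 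Hence $E$ is $(r^2,\dots)^\bot$. Conversely, if $E$ is $(r^2,\dots)^\bot$, witnessed by a sheaf $F$ on some fiber $X_c$, then the same adjunction computation shows $\Ext^i_{X_c}(F, E|_{X_c})=0$ for all $i$ (one must also note $E|_{X_c}$ is a vector bundle for general $c$, since $E$ is torsion free), so by the implication $(2)\implies(1)$ of Theorem \ref{thm25} the restriction $E|_{X_c}$ is a semistable vector bundle; as $r$ and $d=c_1.f$ are coprime, semistable equals stable on the fiber, and a sheaf that is stable on one fiber is stable on the general fiber. Combining $(1)\iff(2)$ with Proposition \ref{prop24} closes the loop.

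The main obstacle — really the only delicate point — is the passage between $\Ext$-orthogonality on the fiber $X_c$ and $\Hom(\iota_{c*}F, E[\bullet])$-orthogonality on $X$, i.e.\ controlling the higher direct images / the effect of the (trivial) normal bundle of $X_c$ in $X$. The clean way is to use that $\iota_{c*}$ admits both adjoints and that $\iota_c^!(-)\cong \iota_c^*(-)[-1]$ for the divisor $X_c$, together with $\iota_c^*\ko_X(X_c)\cong\ko_{X_c}$ because $f^2=0$; then $\R\Hom_X(\iota_{c*}F,E)\cong\R\Hom_{X_c}(F,E|_{X_c})[-1]$, so the two orthogonality conditions are literally equivalent up to a shift, which is irrelevant since we ask vanishing for \emph{all} $j$. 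One should also keep track of the index bookkeeping in Theorem \ref{thm25}, where $d$ denotes the fiber degree $c_1.f$ and $g$ the genus of $f$, to see that the numerical data $r^2(\tfrac{d}{r}-(g-1))$ match. Everything else is a direct citation of Proposition \ref{prop24} and Theorem \ref{thm25}.
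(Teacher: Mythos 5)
Your proposal is correct and follows essentially the same route as the paper, which simply assembles Proposition \ref{prop24} (via the definition of an almost $c_t$-fiberlike polarization) with Theorem \ref{thm25}; the paper states the result with no further argument ("Resuming, we obtain..."). The only thing you add is the explicit adjunction identifying $\R\Hom_X(\iota_{c*}F,E)$ with $\R\Hom_{X_c}(F,E|_{X_c})[-1]$ using the triviality of the normal bundle of a fiber, which the paper leaves implicit and which you handle correctly.
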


\subsection{A cohomological description of purity}
\begin{proposition}\label{purity}
Let $E$ be a coherent sheaf on $X$ with Chern classes $c_t$. Suppose that $E$ is $(r^2,r^2(\frac{d}{r}-(g-1)))^\bot$.
Then there exists a line bundle $L$ on $X$ such that
we have an equivalence
\[ \Hom(L,E) = 0 \iff E \mbox{ is torsion free.} \]
\end{proposition}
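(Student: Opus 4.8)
The plan is to take $L=\ko_X(mf-nH)$, where $f$ is a fibre class, $H$ the fixed (almost $c_t$-fibrelike) polarisation, and $n\gg 0$, $m\gg 0$ are integers to be fixed during the argument, $n$ first and $m$ afterwards, both depending only on $c_t$, $X$ and $H$ and not on $E$. After replacing $E$ by a sufficiently high twist by an ample line bundle we may assume the fibre degree $d=c_1.f$ is positive: this twist turns torsion-free sheaves into torsion-free sheaves, turns $\Hom(L,E)$ into a $\Hom$ of the same shape, and (using Theorem~\ref{thm25} to adjust the orthogonal bundle) turns $(r^2,r^2(\tfrac dr-(g-1)))^\bot$ into the analogous condition for the new Chern classes, so it is harmless.

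First I would pin down the torsion subsheaf $T\subseteq E$. As $E$ is $(r^2,\cdot)^\bot$ there are a point $c_0\in C$ and, by Theorem~\ref{thm25}(3), a rank-$r^2$ vector bundle $F$ on $X_{c_0}$ with $\Hom(\iota_{c_0*}F,E[j])=0$ for all $j$. The fibre $X_{c_0}$ is a Cartier divisor with trivial normal bundle, so $\iota_{c_0}^{!}E=\eL\iota_{c_0}^*E[-1]$ and adjunction gives $\RHom_{X_{c_0}}(F,\eL\iota_{c_0}^*E)=0$; reading off the degree-$(-1)$ part forces $\Hom_{X_{c_0}}(F,\operatorname{Tor}_1^{\ko_X}(E,\ko_{X_{c_0}}))=0$, and since $F$ is locally free of positive rank while this Tor sheaf is $0$-dimensional, $\operatorname{Tor}_1^{\ko_X}(E,\ko_{X_{c_0}})=0$. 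Then $\eL\iota_{c_0}^*E=E|_{X_{c_0}}$ and $\RHom_{X_{c_0}}(F,E|_{X_{c_0}})=0$, so by Theorem~\ref{thm25} the restriction $E|_{X_{c_0}}$ is a semistable, hence (as $r$ and $d$ are coprime) stable, vector bundle on the elliptic curve $X_{c_0}$. Moreover $\operatorname{Tor}_1^{\ko_X}(E,\ko_{X_{c_0}})=0$ says $E$ is flat over $C$ near $c_0$, and since local freeness is open, $E|_{X_c}$ is a vector bundle for all $c$ in a neighbourhood of $c_0$. Consequently $T$ can have no component dominating $C$: such a component would meet the general fibre $X_c$, and at a general point of it $E$ would be flat over $C$ yet not locally free, contradicting the previous sentence. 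Hence $\supp T$ lies in finitely many fibres --- none of them $X_{c_0}$, by the Tor-vanishing --- and $T$ splits as a direct sum $\bigoplus_i T^{(i)}$ indexed by these fibres.

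For the implication ``$E$ torsion free $\Rightarrow\Hom(L,E)=0$'': by Proposition~\ref{ortho} a torsion-free $(r^2,\cdot)^\bot$ sheaf with Chern classes $c_t$ is slope-semistable with respect to $H$, hence lies in a bounded family, so the sheaves $\pi_*(E(nH))$ form a bounded family of torsion-free sheaves on $C$; thus there is an $m_0$, independent of $E$, with $H^0\bigl(C,\pi_*(E(nH))\otimes\ko_C(-m_0 p)\bigr)=0$, where $\ko_X(f)=\pi^*\ko_C(p)$. Taking $m\ge m_0$ we get $\Hom(L,E)=H^0(X,E(nH-mf))=H^0(C,\pi_*(E(nH))(-mp))=0$; this holds for every $n$, so it only constrains $m$ in terms of $n$.

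For the converse, suppose $T\neq 0$; I claim $\Hom(L,E)\neq 0$ once $n$ is chosen suitably. The quotient $E/T$ is torsion free of rank $r$ with $c_1(E/T).f=d$, and (by the second paragraph and $\operatorname{Tor}_1(E,\ko_{X_{c_0}})=0$) $(E/T)|_{X_{c_0}}=E|_{X_{c_0}}$ is stable; so by Proposition~\ref{prop24} $E/T$ is slope-stable with respect to a polarisation of the form $H+Nf$, and the Bogomolov inequality gives $\Delta(E/T)\le 0$. Substituting this into $0\to T\to E\to E/T\to 0$, and using that $c_1(T)$ is an effective vertical class (so $c_1(T).f=0$ and $c_1(T)^2\le 0$ with controlled defect), one bounds $c_1(T).H$ and then $\ch_2(T)=\ch_2(E)-\ch_2(E/T)$ from below by a constant depending only on $c_t$, $X$ and $H$. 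Since $H.c_1(T)\ge 1$ when $T$ is $1$-dimensional, whereas $T$ is a nonzero skyscraper with $\ch_2(T)>0$ when it is $0$-dimensional, Riemann--Roch on $X$ gives $\chi(X,T(nH))=\ch_2(T)+n\,(H.c_1(T))+O(1)>0$ once $n$ is large enough, uniformly in $E$; fix this $n$ (and then $m$ as above). Because $\ko_X(-mf)$ is pulled back from $C$ it is trivial on every infinitesimal thickening of a fibre, so $T(nH-mf)\iso T(nH)$; as $T(nH)$ is supported in dimension $\le 1$, $h^0(X,T(nH))\ge\chi(X,T(nH))>0$, and therefore $0\neq H^0(X,T(nH-mf))\subseteq H^0(X,E(nH-mf))=\Hom(L,E)$, as wanted. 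The main difficulty is the torsion analysis of the second paragraph --- the Tor-computation and the bookkeeping over non-reduced fibres --- together with keeping every estimate in the last two paragraphs genuinely uniform in $E$, since the family of sheaves satisfying the hypotheses is itself unbounded.
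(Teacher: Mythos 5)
Your overall architecture coincides with the paper's: essentially the same line bundle $L$ (a negative multiple of $H$ plus a large multiple of $f$, written with opposite sign conventions), the reduction to showing that a nonzero torsion subsheaf $T\subseteq E$ is a fibre sheaf, and then Bogomolov applied to $E/T$ combined with a Riemann--Roch positivity estimate for $\chi(T\otimes L\dual)$, uniform in $E$ because $\Delta(E)$ depends only on $c_t$. Your forward implication via boundedness of the semistable family and pushforward to $C$ is a harmless variant of the paper's one-line argument ($E$ is stable by Proposition~\ref{ortho} and $\mu(L)>\mu(E)$).

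There is, however, a genuine gap in your second paragraph. The assertion that $\operatorname{Tor}_1^{\ko_X}(E,\ko_{X_{c_0}})$ is $0$-dimensional is not justified and is false in general: this sheaf is the kernel of multiplication by a local equation $t$ of $X_{c_0}$ on $E$, so it contains every subsheaf of $E$ of the form $\iota_{c_0*}W$ with $W$ a sheaf on $X_{c_0}$, i.e.\ it is $1$-dimensional whenever the torsion of $E$ is partly supported on the distinguished fibre $X_{c_0}$ itself --- a configuration compatible with the orthogonality hypothesis, since one only needs $\RHom_{X_{c_0}}(F,W)=0$. What the vanishing of $\RHom_{X_{c_0}}(F,\mathbf{L}\iota_{c_0}^*E)$ actually yields (the relevant spectral sequence degenerates on a curve) is $\RHom_{X_{c_0}}(F,\operatorname{Tor}_1)=0$ and $\RHom_{X_{c_0}}(F,E|_{X_{c_0}})=0$; by Theorem~\ref{thm25} the first only forces $\operatorname{Tor}_1$ to be zero \emph{or} a semistable bundle of the orthogonal slope on $X_{c_0}$, while the second does rule out $0$-dimensional torsion in $E|_{X_{c_0}}$ and hence horizontal components of $\supp T$ --- so that part of your conclusion survives. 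But the stronger chain you build on the false claim ($E$ locally free near $X_{c_0}$, hence $X_{c_0}\not\subseteq\supp T$, hence $(E/T)|_{X_{c_0}}=E|_{X_{c_0}}$ is stable) can fail, and its last link is precisely what you use in the final paragraph to invoke Proposition~\ref{prop24} and the Bogomolov inequality $\Delta(E/T)\le 0$, which is the key quantitative input. The paper instead deduces the stability of $\bar E=E/T$ from the fact that $\bar E$ still satisfies the orthogonality condition, making no claim that $T$ avoids the distinguished fibre; to repair your argument you must obtain the semistability of $E/T$ without assuming $\supp T\cap X_{c_0}=\emptyset$.
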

\begin{proof}
We take an integer $n$ such that
$n(H.f)<1-g+\frac{d}{r}+\frac{\Delta(E)}{2r}$ holds.
After fixing such an integer $n$ we choose $m \in \ZZ$ such that 
$\mu(\ko_X(nH+mf))>\mu(E)$ is satisfied. Set now $L=\ko_X(nH+mf)$.\\
``$\Longleftarrow$''
The line bundle $L$ is a line bundle of slope $\mu(L) > \mu(E)$.
Hence if $E$ is torsion free, then $E$ is stable by Proposition
\ref{ortho}. Whence we deduce $\Hom(L,E)=0$.\\
``$\Longrightarrow$'' From $\Hom(L,E) =0$ we can conclude that the
torsion sheaf $T(E)$ of $E$ is of pure dimension one. Since $E$
satisfies the orthogonality condition
$(r^2,r^2(\frac{d}{r}-(g-1)))^\bot$, we see that $T(E)$ is supported on
fibers of $\pi$. Thus we have $\ch(T(E)) = a[f]+b[{\rm pt}]$ in
$H^*(X,\ZZ)$ with $a \geq 0$. If $a=0$ we are done. So we suppose $a
\geq 1$.
Considering the short exact sequence
\[ 0 \to T(E) \to E \to \bar E \to 0 \]
we see that $\bar E$ also satisfies the orthogonality condition
$(r^2,r^2(\frac{d}{r}-(g-1)))^\bot$, is torsion free, and of discriminant
\[ \Delta(\bar E)  = \Delta(E) + 2(ad-br) \, .\]
Since $\bar E$ fulfills the orthogonality condition, it is stable for
the polarization $H+N \cdot F$ for $N \gg 0$ by Proposition
\ref{prop24}. So Bogomolov's inequality gives $\Delta(\bar E) \leq 0$.
We conclude $b \geq \frac{\Delta(E)+2ad}{2r}$. So
\[\mu(T(E)):= \frac{b}{a} \geq \frac{\Delta(E)}{2ar} + \frac{d}{r}  \geq
\frac{\Delta(E)}{2r}+  \frac{d}{r} \, .\]
The one dimensional sheaf $T(E) \otimes L\dual$ has Euler characteristic
$\chi(T(E) \otimes L\dual) = a(-L.F + \mu(T(E)) + 1-g)$.
The above choice of $L$ and the inequality for $\mu(T(E))$ imply that
$\chi(T(E) \otimes L\dual) > 0$ for $a>0$. Thus, we have $a=0$ iff
$\Hom(L,T(E)) =H^0(T(E) \otimes L\dual)=0$. Since $\Hom(L,E)=0$ implies
$\Hom(L,T(E)) =0$, we are done.
\end{proof}

\subsection{A $P$-datum for fibered surfaces}
If $E$ is a stable sheaf of given Hilbert polynomial, then there exists
an integer $N_0$ such that $H^i(E(N))=0$ for all $i >0$ and $N \geq
N_0$. The number $N_0$ depends only on the numerical invariants of $E$.
In \cite[Proposition 5]{HP}  it is shown that for any Hilbert polynomial
$p$ there exists a finite number of vector bundles $\{ L_i
\}_{i=0,..,M}$ and natural numbers $N_i$ such that we have an
implication for all objects $E \in \Db(X)$

\[ \left( \hom(L_i,E[j]) = \left\{ \begin{array}{ll}
N_i & \mbox{ for } j=0\\0& \mbox{ otherwise} \end{array}\right. \right)
\implies E  \mbox{ is a coherent sheaf of Hilbert polynomial } p \, .
\]
Using this and Propositions \ref{ortho} and \ref{purity}, we can sum up
the results of this section as follows:
\begin{theorem}\label{theorem:stable-is-Pstable}
Let $\pi:X \to C$ be a fibration of a smooth projective surface over a
curve, $f$ a general fiber of $\pi$,
$g$ the genus of $f$,
$c_t=(r,c_1,c_2)$ Chern classes, with $r$ coprime to $d:=f.c_1$.
Then there exists a polarization $H$ on $X$, objects $L,L_i \in
\Db(X)$, and integers $N_i$ such that:
an object $E$ satisfies the conditions 
\begin{enumerate}
\item[(h)] $\hom(L_i,E[j]) = \left\{ \begin{array}{ll}
N_i & \mbox{ for } j=0\\0& \mbox{ otherwise,} \end{array}\right.$
\item[(tf)] $\Hom(L,E)=0$, and 
\item[(o)] $E$ is $(r^2,r^2(\frac{d}{r}-(g-1)))^\bot$ 
\end{enumerate}
if and only if $E$ is a semistable sheaf of Chern classes $c_t$.
\end{theorem}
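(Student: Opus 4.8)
The plan is to take for $L$, the $L_i$ and the $N_i$ exactly the data furnished by the three Propositions of this section together with \cite[Proposition 5]{HP}, and to deduce the equivalence by composing the implications they provide. First I would fix the polarization: let $H$ be an almost $c_t$-fiberlike polarization, which exists by Proposition \ref{prop24} (cf.\ Definition \ref{Def:fiberlike-polarization}). Let $p=p(m)$ be the Hilbert polynomial with respect to $\ko_X(H)$ of a coherent sheaf with Chern classes $c_t$, computed by Riemann--Roch. Apply \cite[Proposition 5]{HP} to $p$ to obtain bundles $L_i$ and integers $N_i$ with the property that the prescription (h) forces $E$ to be a coherent sheaf of Hilbert polynomial $p$; I would then enlarge this finite collection by the further twists $\ko_X(D-nH)$, with $D$ running over a basis of $\NS(X)$ and $n\gg0$, the corresponding $N_i$ being the Euler characteristics computed from $c_t$ by Riemann--Roch, so that (h) in fact fixes the class of $E$ in the numerical Grothendieck group, hence $c_t=(r,c_1,c_2)$. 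Finally, let $L$ be the line bundle associated to $c_t$ and $H$ by Proposition \ref{purity}.

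For the implication ``(h), (tf), (o) $\Rightarrow$ semistable sheaf of Chern classes $c_t$'': by (h) and the above choice of the $L_i$ the object $E$ is a coherent sheaf with Chern classes $c_t$. Since $E$ also satisfies (o), Proposition \ref{purity} applies and gives that $\Hom(L,E)=0$ if and only if $E$ is torsion free; together with (tf) this shows that $E$ is torsion free. Now $E$ is a torsion free sheaf with Chern classes $c_t$ satisfying (o), and $H$ is almost $c_t$-fiberlike, so by Proposition \ref{ortho} the sheaf $E$ is slope stable with respect to $H$; in particular it is Gieseker semistable, i.e.\ $E$ is a semistable sheaf of Chern classes $c_t$.

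For the converse, let $E$ be a semistable sheaf of Chern classes $c_t$. Being semistable of positive rank it is pure of dimension two, i.e.\ torsion free, and Gieseker semistability implies slope semistability with respect to $H$; so by Proposition \ref{ortho} the sheaf $E$ satisfies (o). Since $E$ has Chern classes $c_t$ and satisfies (o), Proposition \ref{purity} applies, and as $E$ is torsion free we obtain $\Hom(L,E)=0$, i.e.\ (tf). For (h), I would arrange that the $L_i$ include the line bundles $\ko_X(-n_iH)$ with $n_i$ so large that the (bounded) family of semistable sheaves with Chern classes $c_t$ is uniformly $n_i$-regular: then $h^0(E(n_i))=p(n_i)=N_i$ and $h^j(E(n_i))=0$ for $j>0$ for every such $E$, and the same $n_i$-regularity kills the higher cohomology of the extra twists used to record $c_1$.

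The delicate point is not any individual step but the simultaneous calibration of the finite collection $\{L_i\},\{N_i\}$: on the one hand (h) must be rigid enough to force $E$ to be an honest coherent sheaf with the prescribed numerical invariants — this is the substance of \cite[Proposition 5]{HP}, to which I only add the twists needed to strengthen ``Hilbert polynomial $p$'' into ``Chern classes $c_t$'' — and on the other hand (h) must be satisfied by \emph{every} semistable sheaf of Chern classes $c_t$, which is where boundedness of that family and the resulting uniform Castelnuovo--Mumford regularity enter. Granting that calibration, the equivalence is the formal concatenation of Propositions \ref{prop24}, \ref{ortho} and \ref{purity}.
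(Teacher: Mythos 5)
Your proof is correct and follows essentially the same route as the paper, which obtains the theorem by concatenating Proposition \ref{prop24} (existence of an almost $c_t$-fiberlike polarization), Proposition \ref{ortho}, Proposition \ref{purity}, and \cite[Proposition 5]{HP} for condition (h). You are in fact slightly more careful than the paper on two points it glosses over --- that the Hilbert polynomial alone does not pin down $c_1$ (hence your extra twists), and that the converse direction of (h) requires boundedness and uniform regularity of the family of semistable sheaves with Chern classes $c_t$ --- both of which are correct and welcome refinements rather than a different argument.
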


\begin{remark}
For an elliptic fibration $\pi:X \to C$  we can replace by Remark \ref{rem26} the
orthogonality condition (o) in the above theorem by $(r,d)^\bot$.
\end{remark}

\begin{remark}
The choice of a fiber and of a line bundle in condition (o) depends a priori on $E$. In order to
describe a $P$-datum, we need a finite number of objects in $\Db(X)$. Using the same arguments as in the
proof of \cite[Theorem 4]{HePold}, we can obtain them. It will be nevertheless more comfortable to
use in all the paper the condition (o), because of its explicit meaning. 
\end{remark}

\subsection{Exceptional fibers}
In this final subsection, we analyze, given a semistable sheaf $E$ of fixed
Chern classes, how unstable its restriction to a fiber can be. Here we
assume that the canonical class $K_X$ is trivial.

\begin{proposition}\label{ppp-fib}
Let $K_X$ be trivial, and $E$ be a vector bundle on $X$ with Chern classes
$c_t=(r,c_1,c_2)$. We assume that the rank $r$ and the fiber degree $c_1.f$
are coprime, and $E$ is stable for an almost $c_t$-fiberlike
polarization $H$. We set $t:=\frac{1}{2} h^1(\ke nd(E))= 1-\frac{1}{2}
\chi(\ke nd(E))$.
\begin{enumerate}
\item
Take a fiber $X_0$, denote $E_0:=E|_{X_0}$, and
consider a surjection $E_0 \stackrel{\alpha}{\to} Q$ with $Q$ a sheaf on
$X_0$ of positive rank $r_Q$. We write $E'_0$ for $\ker(\alpha)$ which
is a vector bundle on $X_0$ and $r'$ for its rank (remark that $r'=r-r_q$).
Then we have the inequality
\[ r r' ( \mu(E'_0)-\mu(E_0)) =  r r_Q\left( \mu(E_0)-\mu(Q) \right) \leq t \, .\]
\item
If the restriction $E_i$ of $E$ to the fibers $X_i$ is not
semistable, then we have short exact sequences $0 \to E'_i \to E_i \to
E''_i \to 0$ of $X_i$-vector bundles. Denote by $r'_i$ and
$r''_i$ the ranks of $E'_i$ and $E''_i$ respectively. We have an inequality
\[ \sum_{i=1}^l r r_i' (\mu(E_i')-\mu(E_i)) = \sum_{i=1}^l r
r_i'' (\mu(E_i)-\mu(E_i'')) \leq t \, . \]
\item The number of fibers $X_i$ where the restriction $E_i$ of $E$ to
$X_i$ is not stable cannot exceed $t$.
\end{enumerate}
\end{proposition}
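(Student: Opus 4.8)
The plan is to deduce all three statements from a single construction: an elementary modification $E'$ of $E$ along the offending fibres, whose discriminant records exactly the instability defects, together with the observation that $E'$ is again simple and hence — $K_X$ being trivial — obeys $\chi(E',E')\le 2$. I would first record two numerical inputs. By Riemann--Roch on $X$, for every coherent sheaf $F$ of rank $\rho$ one has $\chi(F,F)=\Delta(F)+\rho^{2}\chi(\ko_X)$, so $\Delta(F)-\Delta(G)=\chi(F,F)-\chi(G,G)$ whenever $\rk F=\rk G$. And since $K_X\cong\ko_X$, Serre duality gives $\Ext^{2}(F,F)\iso\Hom(F,F)\dual$, whence any simple torsion free sheaf satisfies $\chi(F,F)=2-\dim\Ext^{1}(F,F)\le 2$; applied to $E$ (simple, because stable) this gives $\chi(E,E)=\chi(\ke nd(E))=2-h^{1}(\ke nd(E))=2-2t$.

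Next I would build the modification. In case (1) set $Q_1:=Q$ on $X_0$; in case (2) set $Q_i:=E''_i$ on $X_i$; write $\rho_i:=\rk(Q_i)$. Composing $E\twoheadrightarrow\iota_{i*}(E|_{X_i})\twoheadrightarrow\iota_{i*}Q_i$ and using that the $X_i$ lie over distinct points of $C$ (so the $\iota_{i*}Q_i$ have disjoint supports) yields a surjection $E\twoheadrightarrow\bigoplus_i\iota_{i*}Q_i$; let $E'$ be its kernel, a torsion free sheaf of rank $r$ which agrees with $E$ away from $\bigcup_i X_i$. Consequently $E'$ restricts to $E|_{X_c}$ on a general fibre $X_c$, and that restriction is stable because $E$ is stable for an almost $c_t$-fiberlike polarization; since $c_1(E').f=d$ is still coprime to $r$, Proposition~\ref{prop24} applies to $E'$ and shows $E'$ is slope stable with respect to $H+Nf$ for $N\gg 0$, in particular simple.

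Then comes the bookkeeping. As $X_i^{2}=0$ and $X_i$ is numerically $f$, the normal bundle of $X_i$ in $X$ has degree $0$, so $\ch(\iota_{i*}Q_i)=\rho_i[f]+\deg(Q_i)[\mathrm{pt}]$; hence $c_1(E')=c_1(E)-(\textstyle\sum_i\rho_i)f$ and $\ch_2(E')=\ch_2(E)-\sum_i\deg(Q_i)$, and a short computation with $f^{2}=0$, $c_1(E).f=d$ gives $\Delta(E')=\Delta(E)+2\sum_i(\rho_i d-r\deg Q_i)$. The elementary identity $\rho_i d-r\deg(Q_i)=r\,\rk(E'_i)\bigl(\mu(E'_i)-\mu(E_i)\bigr)=r\rho_i\bigl(\mu(E_i)-\mu(Q_i)\bigr)$ (from $\deg E_i=d$, $\rk E_i=r$) puts the right-hand side into the shape appearing in (1) and (2). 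Since $\rk E'=\rk E=r$, the numerical inputs give
\[
\Delta(E')-\Delta(E)=\chi(E',E')-\chi(E,E)\le 2-(2-2t)=2t,
\]
so $\sum_i r\,\rk(E'_i)\bigl(\mu(E'_i)-\mu(E_i)\bigr)\le t$, which is (1) with one fibre and (2) in general. For (3): if $E|_{X_i}$ is not stable then it is not semistable, because coprimality of $r$ and $d$ rules out a subsheaf of rank in $\{1,\dots,r-1\}$ and slope $d/r$; picking a destabilizing subsheaf $E'_i\subset E|_{X_i}$, the integer $r\,\rk(E'_i)(\mu(E'_i)-\mu(E_i))=r\deg(E'_i)-\rk(E'_i)\,d$ is positive, hence $\ge 1$, and summing over the (finitely many, by openness of fibrewise stability) such fibres and applying (2) bounds their number by $t$.

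The one genuinely non-formal step is the simplicity of $E'$: that is what lets the trivial-canonical-class inequality $\chi(E',E')\le 2$ bite, and it rests on $E'$ sharing the general-fibre restriction of $E$ together with the applicability of Proposition~\ref{prop24} to $E'$ (which needs $\rk E'$ coprime to the fibre degree — preserved, since the modification changes $E$ only along fibres). Everything else is routine: the Chern-character identity for $\iota_{i*}Q_i$ holds for singular fibres as well, since only $\deg(E|_{X_i})=d$ and $X_i\equiv f$ are used, and the reduction of (3) to (2) is immediate.
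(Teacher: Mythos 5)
Your proof is correct and follows essentially the same route as the paper's: form the elementary modification $E'=\ker\bigl(E\to\bigoplus_i\iota_{i*}Q_i\bigr)$, observe that it is again stable (hence simple) because it agrees with $E$ on the general fibre, and bound $\chi(E',E')-\chi(E,E)$ by $2t$ via Riemann--Roch. The only differences are cosmetic — you phrase the bookkeeping through $\Delta$ and $\chi(F,F)$ rather than $\chi(\ke nd(-))$, and you spell out the coprimality argument behind part (3), which the paper leaves implicit.
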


\begin{proof}
(1) We consider the composition of surjections $E \to E_0 \to Q$ and
denote it kernel by $E'$. On a general fiber, the restriction of $E'$
coincides with the restriction of $E$. So we know that $E'$ is stable for
some almost-fiberlike polarization. This implies $h^0(\ke nd(E')) =
h^2(\ke nd(E'))=1$.
Thus, we have
\[\chi(\ke nd(E'))-\chi(\ke nd(E)) = 2t -h^1(\ke nd(E')) \leq 2t \,.\]
Using the Riemann-Roch theorem to compute $\chi(\ke nd(E'))-\chi(\ke
nd(E))$ we obtain
\[\chi(\ke nd(E'))-\chi(\ke nd(E))=(\ch(\ke nd(E'))-\ch(\ke nd(E)))
\td(X) = 2r_Q \deg(E_0) -2 r \deg(Q) \, . \]
Together with the above inequality this yields the inequality in the
statement (1). To check the equality $r r' ( \mu(E'_0)-\mu(E_0)) =  r
r_Q\left( \mu(E_0)-\mu(Q) \right)$ is straightforward.\\
(2) Is shown analogously to (1) by considering
$E'= \ker\left( E \to \bigoplus _{i=1} ^l E''_i \right)$.\\
(3) Follows from (2) since for destabilizing $E_i' \subset E_i$,
each individual summand in the above sum is a positive integer.
\end{proof}

\section{Isomorphisms between moduli spaces}\label{section:elliptic-fibration-and-fm}

From now on, we will focus our attention on elliptic surfaces.
In this section, we consider a K3 elliptic surface $X$, its relative Jacobian $\widehat{X}$ and the
Fourier-Mukai equivalence $\Psi$ between $\Db(X)$ and $\Db(\widehat{X})$ whose kernel is the relative
Poincar\'e bundle. Given a Mukai vector $v$ such that the moduli space $M_X(v)$ is smooth
projective irreducible and non-empty, we describe under which conditions $\Psi$ induces an isomorphism
between $M_X(v)$ and $M_{\widehat{X}}(\hat{v})$, where $\hat{v}$ depends only on $v$. In order to do this,
we consider the $P$-stability datum on $\Db(X)$ corresponding to the semistability of torsion
free sheaves with Mukai vector $v$. The choice of a very special Fourier-Mukai here is not restrictive.
We define the Euclid-Fourier-Mukai algorithm and
show that the construction described by Bridgeland in \cite{bridgellip} can be
recovered by ours in a finite number of step, see Lemma \ref{us=bridg}.

Let $\pi: X \to \PP^1$ be a relatively minimal elliptic surface with at most finitely many singular fibers.
Suppose moreover that singular fibers are at most nodal.
We restrict our attention to the K3 case in order to make all numerical calculation more clear.
However, the arguments can be extended to a general elliptic fibration with at most finitely many nodal
fibers over a smooth projective curve, provided that the numerical data are properly adapted.  
Let $f$ denote the class of a general fiber of $\pi$.
If $E$ is a torsion free sheaf on $X$, we call the integer $c_1(E).f$ its {\it fiber degree}.
If $E$ is an object in $\Db(X)$, then we define the fiber degree (resp. the rank)
of $E$ by the alternate sum of the fiber degrees (resp. the ranks) of its cohomology sheaves.

\subsection{Isomorphisms between moduli spaces} 
Let us fix a Mukai vector $v = (r,\Lambda,c)$, such that $\Lambda.f$ and $r$ are coprime.
Up to twisting with a line bundle, we can uniquely fix $\Lambda.f = -d$ with $0<d<r$.
Let us fix an almost $v$-fiberlike polarization $H$. Then the moduli space of semistable torsion free sheaves
with Mukai vector $v$ on $X$ is an irreducible smooth projective variety of dimension $2t$, where
$2-2t = \langle v,v \rangle$. We will denote this moduli space by $M_X(v)$.

Let $\widehat{X}$ be the relative Jacobian of $X$, that is the elliptic surface over $\PP^1$, whose fiber over
a point $c$ is the Jacobian of the elliptic curve $X_c$. The surface $\widehat{X}$ can be described as a
relative moduli space over $\PP^1$, for definition and properties we refer to \cite{huylehn}, in
particular Theorem 4.3.7. 
Let $\kp$ be the universal sheaf over $X \times \widehat{X}$. We consider the 
Fourier-Mukai transform $\Psi: \Db(X) \to \Db(\widehat{X})$ with kernel $\kp$, which is defined as
$$\Psi(-) = R\hat{p}_* (p^* (-) \otimes \kp),$$
where $p$ and $\hat{p}$ denote the projection from $X \times \widehat{X}$ to $X$ and $\widehat{X}$ respectively.
It is well known that $\Psi$ is an equivalence between $\Db(X)$ and $\Db(\widehat{X})$. We will denote by
$\Phi$ the relative quasi-inverse of $\Psi$, that is $\Phi \circ \Psi = [1]$. Denote by $\kq$ the kernel of $\Phi$.

Let $\hat{v} = (d,\hat{\Lambda},\hat{c})$ be the Mukai vector on $\widehat{X}$ with:
\begin{enumerate}
\item $\hat{\Lambda}.\hat{f} = r$,
\item $\langle\hat{v},\hat{v}\rangle = \langle v,v \rangle$.
\end{enumerate}
The moduli space $M_{\widehat{X}}(\hat{v})$ is irreducible
smooth projective of the same dimension $2t$ as $M_X(v)$.

In this Section we give the proof of the following Theorem.

\begin{theorem}\label{thm:iso-on-modspaces}
In the previous notations, if $t<r$, the moduli spaces $M_X(v)$ and $M_{\widehat{X}} (\hat{v})$ are isomorphic
and the isomorphism is induced by the Fourier-Mukai transform $\Psi$. 
\end{theorem}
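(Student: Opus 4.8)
The plan is to use the $P$-stability machinery to transport the moduli problem across the Fourier--Mukai equivalence $\Psi$. By Theorem \ref{theorem:stable-is-Pstable}, there is a $P$-datum $(L, L_i, N_i)$ on $\Db(X)$ whose $P$-stable objects are exactly the semistable torsion free sheaves with Mukai vector $v$; since the moduli space $M_X(v)$ is fine (coprimality of $r$ and $d$), Proposition \ref{prop:phi-is-iso-modspace} gives a fine moduli space $N_{\widehat X}$ of $\Psi(P)$-stable objects in $\Db(\widehat X)$ which is isomorphic to $M_X(v)$ via $\Psi$. The whole content of the theorem is therefore the identification $N_{\widehat X} = M_{\widehat X}(\hat v)$ under the hypothesis $t < r$: I must show that, when $t<r$, every $\Psi(P)$-stable object in $\Db(\widehat X)$ is a semistable torsion free sheaf with Mukai vector $\hat v$, and conversely.

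First I would check the numerics. The Mukai vector is transported by $\Psi$ to a well-defined class $\Psi(v)$ on $\widehat X$; a Chern-character/Grothendieck--Riemann--Roch computation for the Poincar\'e kernel (this is the standard relative Fourier--Mukai computation on elliptic surfaces, cf.\ Bridgeland \cite{bridgellip}) shows that $\Psi$ exchanges rank and fiber degree up to sign, so $\Psi(v)$ has rank $d$, fiber degree $r$, and the same self-intersection $\langle v,v\rangle = \langle\hat v,\hat v\rangle$ because $\Psi$ is an isometry of the Mukai pairing — i.e.\ $\Psi(v) = \pm\hat v$ after the normalization $0<d<r$. Next, and this is the heart of the matter, I must show that a $\Psi(P)$-stable object $G$ is concentrated in a single degree (hence a sheaf) and torsion free. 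Write $G = \Psi(E)$ with $E$ $P$-stable on $X$, so $E$ is a semistable torsion free sheaf with $v(E)=v$; I want to understand the $\Psi$-WIT properties of such $E$. Since $H$ is almost $v$-fiberlike, $E|_{X_c}$ is stable for the generic fiber, and on those fibers $\Psi$ restricts to the classical elliptic-curve Fourier--Mukai, which takes a stable bundle of fiber degree $-d \ne 0$ to a single sheaf (the WIT$_1$ or WIT$_0$ case depending on sign, here a single cohomology sheaf). So $\Psi(E)$ is a sheaf generically on $\widehat X$; the obstruction to it being globally a sheaf is supported on the finitely many bad fibers, and this is exactly where the bound from Proposition \ref{ppp-fib} enters: the number of fibers where $E$ restricts non-semistably is at most $t$, and more precisely the "defect" is controlled by $t = 1 - \tfrac12\chi(\ke nd(E))$. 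I would combine this with the spectral-sequence relating $\mathcal{H}^0(\Psi E)$ and $\mathcal{H}^1(\Psi E)$ on each bad fiber to show that when $t<r$ the contribution from each such fiber cannot accumulate enough to produce a nonzero $\mathcal{H}^{-1}$ or a torsion subsheaf in $\mathcal{H}^0$; the inequality in Proposition \ref{ppp-fib}(1), which bounds $r r'(\mu(E'_0)-\mu(E_0))$ by $t$, forces $r' \geq $ something incompatible with a genuine jump once $t<r$. Having shown $G=\Psi(E)$ is a torsion free sheaf of class $\hat v$, its semistability follows from Proposition \ref{ortho}: $G$ restricted to a general fiber of $\widehat X \to \PP^1$ is stable (again by the fiberwise Fourier--Mukai on elliptic curves applied to the stable $E|_{X_c}$), and an almost $\hat v$-fiberlike polarization on $\widehat X$ then upgrades fiberwise stability to global semistability, which is the orthogonality condition $(r,d)^\bot$ defining $\Psi(P)$-stability.

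For the converse inclusion I would run the same argument with the quasi-inverse $\Phi$: starting from a semistable torsion free sheaf $G$ with $v(G)=\hat v$, the sheaf $\Phi(G)$ is generically (on $X$) a single sheaf of class $v$, and since now the roles of $r$ and $d$ are swapped with $d<r$, the bound "number of bad fibers $\leq t$" together with $t<r$ again forbids a nonzero $\mathcal{H}^{-1}$, so $\Phi(G)$ is a torsion free sheaf, i.e.\ $\Phi(G)$ is $P$-stable and $G$ is $\Psi(P)$-stable. This shows the two sets of stable objects coincide, hence $N_{\widehat X} = M_{\widehat X}(\hat v)$ as moduli functors, and with the isomorphism $N_{\widehat X} \cong M_X(v)$ of Proposition \ref{prop:phi-is-iso-modspace} we get the stated isomorphism $\varepsilon: M_X(v) \isomor M_{\widehat X}(\hat v)$ induced by $\Psi$.

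The main obstacle I expect is the second step: controlling the cohomology sheaves of $\Psi(E)$ over the bad (nodal or unstable-restriction) fibers and proving that the precise numerical threshold is $t<r$ rather than something weaker. This requires carefully relating the fiberwise WIT indices to the global ones via the base-change/relative-duality spectral sequence, tracking how a destabilizing quotient on a special fiber $X_0$ contributes a torsion summand to $\mathcal{H}^0(\Psi E)$ or a piece of $\mathcal{H}^{-1}(\Psi E)$, and then showing that the aggregate of all such contributions — bounded above by $t$ through Proposition \ref{ppp-fib} — is strictly dominated when $t<r$, so no genuine complex or torsion can appear. The rank $r$ enters because each single bad fiber, to produce such a defect, forces a jump of size at least governed by $1/r$ in the slope comparison, so $t \geq r$ is exactly the point where the estimate first fails — which is why the hypothesis $t<r$ is sharp and dovetails with the Mukai-flop analysis of the next section.
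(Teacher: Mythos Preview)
Your overall architecture matches the paper's: transport the $P$-datum by $\Psi$, show that for $t<r$ the $\Psi(P)$-stable objects on $\widehat X$ are exactly the semistable torsion free sheaves of Mukai vector $\hat v$, and run the same check backwards with $\Phi$. The orthogonality argument for semistability (your last step in the forward direction) is exactly Corollary \ref{image-of-stable-is-orth} and Proposition \ref{ortho}.

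There are, however, two points where your sketch is muddled and misses the actual mechanism the paper uses.

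First, you spend effort arguing that $\Psi(E)$ is ``generically a sheaf'' and then try to control both $\mathcal H^{-1}$ and torsion in $\mathcal H^0$ via the bad-fiber count. In fact $E$ is \emph{always} $\Psi$-WIT$_1$, with no hypothesis on $t$: since $0<d<r$ the slope of $E$ is negative, and Bridgeland's Lemma (quoted as Lemma \ref{lemma-recall-bridg}) gives WIT$_1$ directly. So $\widehat E = \Psi(E)[1]$ is automatically a sheaf, and the only question is whether it is torsion free. Your spectral-sequence discussion is aiming at a non-issue.

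Second, and more seriously, you do not identify \emph{how} $t<r$ enters. The paper proves a clean criterion (Proposition \ref{E-hat-is-tf}): $\widehat E$ is torsion free iff $E$ is locally free \emph{and} no restriction $E|_{X_0}$ admits a sub-bundle of positive slope. These are two separate obstructions. The second is handled by Proposition \ref{ppp-fib}(1) exactly as you suggest, and in fact only needs $t<r+d$ (Lemma \ref{r^2-condition}). The first obstruction --- local freeness of $E$ --- is where $t<r$ is really used, and the argument is not a bad-fiber estimate at all: if $E$ were not locally free, the double dual $E^{\vee\vee}$ would be a stable sheaf with Mukai vector $(r,\Lambda,c+1)$, whose moduli space has dimension $2(t-r)<0$ and is therefore empty (Lemma \ref{r-condition}). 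Your proposal never mentions the double dual, so this step is a genuine gap.

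For the converse direction your instinct is closer to correct: here $G\in M_{\widehat X}(\hat v)$ has positive slope and is \emph{not} automatically WIT$_0$, so one really does need to kill a potential WIT$_1$ quotient. The paper does this via Bridgeland's canonical short exact sequence $0\to A\to G\to B\to 0$ with $A$ $\Phi$-WIT$_0$ and $B$ $\Phi$-WIT$_1$, shows $B$ must be a fiber sheaf of nonpositive slope, and then Proposition \ref{ppp-fib}(1) applied to the rank-$d$, fiber-degree-$r$ sheaf $G$ gives $r_0 r + d_0 d \le t$ for the fiber rank $r_0\ge 1$ and degree $-d_0\le 0$ of $B$; since $t<r$ this is impossible (Lemma \ref{phi-wit-little-t}). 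Once $G$ is $\Phi$-WIT$_0$, a separate argument (Lemma \ref{lem-wit-is-it}) upgrades this to $\Phi$-IT$_0$, i.e.\ $\Phi(G)$ is a vector bundle.
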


Consider the $P$-stability corresponding to semistability with respect to $H$ described in
Theorem \ref{theorem:stable-is-Pstable}. Recall that it is given by three different conditions,
namely (h) and (tf), which ensures that a $P$-stable object is a torsion free sheaf
with given Hilbert polynomial, and the orthogonality condition (o).
We will show first that the condition (o) is well behaved with respect to the Fourier-Mukai transform. Then the image
under $\Psi$ of any stable sheaf satisfies an orthogonality condition. Then if a sheaf in $M_X(v)$ is mapped by $\Psi$
to a torsion free sheaf, this sheaf will be stable.

Let us start by recalling the general definition of ${\mathrm{WIT}}_i$ and ${\mathrm{IT}}_i$ sheaves.

\begin{definition}\label{def:wit}
Let $\Pi : \Db(Y) \to \Db(Z)$ be a functor between the derived categories of two smooth projective varieties.
A sheaf $E$ on $Y$ is $\Pi\WIT_i$ if $\Pi(E)[i]$ is a sheaf on $Z$, that is an object in $\Db(Z)$ whose only nontrivial
cohomology sheaf lives in degree zero. We denote in this case $\Pi(E)[i]$ by $\widehat{E}$. A $\Pi\WIT_i$ sheaf $E$ is 
$\Pi\IT_i$ if $\widehat{E}$ is a vector bundle.
\end{definition}

Remark that an object $E$ in $\Db(X)$ is $\Psi\WIT_i$ if and only if $\Psi(E)$ is $\Phi\WIT_{1-i}$, just
because $\Phi \circ \Psi = [1]$.

\subsection{From $M_X(v)$ to $M_{\widehat{X}}(\hat{v})$}

\begin{lemma}\label{lemma-recall-bridg}
Let $E$ be an object in $\Db(X)$ of rank $a$ and fiber degree $b$, then $\Psi(E)$ has rank $-b$ and fiber
degree $a$.

Let $E$ be a semistable torsion free sheaf on $X$ with negative slope. Then $E$ is $\Psi\WIT_1$.
\end{lemma}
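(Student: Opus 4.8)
The plan is to prove both assertions by reducing everything to the behaviour of the Fourier--Mukai transform fibrewise, i.e.\ along the elliptic fibres, where $\Psi$ restricts (up to a twist and shift) to the classical elliptic-curve Fourier--Mukai transform of Mukai. For the first statement, the rank and fiber degree of an object are, by the definitions fixed just above the lemma, the alternating sums of ranks and fiber degrees of its cohomology sheaves, and these are Chern-character data that are additive in triangles; so it suffices to compute $\ch(\Psi(E))$ in terms of $\ch(E)$ and read off the two coefficients that see the generic fibre. This is a Grothendieck--Riemann--Roch computation for the diagram $X \leftarrow X\times\widehat X \to \widehat X$ with kernel the Poincar\'e bundle $\kp$: the Todd classes of the fibres are trivial (the fibres are elliptic curves, $K=0$), and the relevant part of $\ch(\kp)$ is the class of the diagonal section, so one obtains exactly the transposition $(a,b)\mapsto(-b,a)$ on (rank, fiber degree). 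Alternatively, and more cheaply, one restricts to a general fibre $X_c\cong \widehat X_c$, where $\Psi$ becomes the elliptic Fourier--Mukai functor that swaps $(\mathrm{rk},\deg)\mapsto(\mathrm{deg},-\mathrm{rk})$ on the fibre (Atiyah/Mukai), and matches signs; this is the computation already recorded by Bridgeland, hence the lemma label.

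For the second statement I would argue as follows. Let $E$ be a semistable torsion free sheaf on $X$ with $\mu_H(E)<0$; since $H$ is almost $v$-fiberlike (and $\Psi$ only cares about the fibre direction), the relevant fact is that the restriction $E_c=E|_{X_c}$ to a general elliptic fibre is semistable of negative degree. First reduce to $\Psi\WIT_i$ for some $i\in\{0,1\}$: because $\pi$ has relative dimension one and $\kp$ is a sheaf flat over both factors, $\Psi$ has cohomological amplitude $[0,1]$, so for any sheaf $E$ the complex $\Psi(E)$ has cohomology only in degrees $0$ and $1$; a base-change argument shows $\kh^0(\Psi E)$ is the sheaf which is fibrewise $H^0(X_c,E_c\otimes\kp_c)$ and $\kh^1(\Psi E)$ is fibrewise $H^1$. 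Now use that on the elliptic fibre a semistable bundle of strictly negative degree has no sections: $H^0(X_c,E_c\otimes\kp_c)=0$ for general $c$, because $E_c$ semistable of negative slope twisted by a degree-zero line bundle $\kp_c$ is again semistable of negative slope, hence has no global sections. Therefore $\kh^0(\Psi E)$ is a torsion sheaf supported on finitely many fibres — and here I would invoke Proposition~\ref{prop24}/Theorem~\ref{theorem:stable-is-Pstable}, or rather the torsion-freeness mechanism of Proposition~\ref{purity}, to kill it: the key point is that $E$ being stable (equivalently satisfying the orthogonality condition) forces $\Psi E$ to satisfy the transformed orthogonality condition, which in particular prevents $\kh^0(\Psi E)$ from having any fibre-supported torsion; so $\kh^0(\Psi E)=0$ and $E$ is $\Psi\WIT_1$.

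The step I expect to be the genuine obstacle is the very last one: ruling out torsion in $\kh^0(\Psi E)$ on the finitely many bad fibres, i.e.\ the special fibres where $E_c$ fails to be semistable or where the fibre is singular (nodal). On a general fibre the vanishing $H^0=0$ is immediate from semistability and negativity, but the definition of $\mu$-(semi)stability on $X$ with a fiberlike polarization only controls the restriction to a \emph{general} fibre, so one must either (i) use Proposition~\ref{ppp-fib} to bound how badly $E_c$ can fail to be semistable on the exceptional fibres and check that the destabilizing quotients still have nonpositive degree, hence no cohomology in degree~$0$, or (ii) argue directly that any nonzero torsion subsheaf $T\subset\kh^0(\Psi E)$ would, after applying the quasi-inverse $\Phi$, produce a subobject of $E$ violating stability — this is the cleanest route and mirrors Bridgeland's original argument. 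The remaining bookkeeping (flatness of $\kp$, cohomology and base change, additivity of Chern data in triangles) is routine and I would not belabour it.
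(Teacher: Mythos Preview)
The paper does not give an independent argument here: its proof is a two-line citation of Bridgeland \cite{bridgellip}, Theorem~5.3 and Lemma~6.4, after recording that the matrix attached to $\Psi$ on $(\mathrm{rk},\text{fiber degree})$ is $\left(\begin{smallmatrix}0&1\\-1&0\end{smallmatrix}\right)$. Your plan is essentially a reconstruction of Bridgeland's proofs of those two statements, and for Part~1 your fibrewise/GRR sketch is exactly the content of Bridgeland's Theorem~5.3.

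For Part~2 your outline lands on the right argument (your route (ii)), but two intermediate suggestions are off. First, invoking the orthogonality condition or Proposition~\ref{purity} to ``kill'' fibre-supported torsion in $\kh^0(\Psi E)$ does not work as phrased: those statements control a \emph{sheaf} on $X$ satisfying $(r,d)^\perp$, whereas here $\Psi E$ is a two-term complex on $\widehat X$, and orthogonality of the complex says nothing direct about the torsion of its $\kh^0$. Second, route (i) via Proposition~\ref{ppp-fib} does not rule out positive-slope subsheaves on exceptional fibres in general; that proposition only bounds the defect, and in fact such subsheaves do occur once $t\ge r+d$ (this is the whole point of Section~4), so this cannot be the mechanism behind a statement valid for all semistable $E$ of negative slope.

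The clean version of route (ii), which is Bridgeland's actual argument, avoids your step~1 entirely: rather than first proving $\kh^0(\Psi E)$ is torsion and then choosing a subsheaf $T$, apply the canonical decomposition (Bridgeland, Lemma~6.1; used repeatedly in this paper as sequence~\eqref{seq-wit0-1}) to $E$ itself, obtaining $0\to A\to E\to B\to 0$ with $A$ $\Psi\WIT_0$ and $B$ $\Psi\WIT_1$. As $E$ is torsion free, so is $A$; a torsion free $\Psi\WIT_0$ sheaf restricts to a $\WIT_0$ sheaf on the general fibre, hence has strictly positive fibre degree. For a fiberlike polarization this contradicts the semistability of $E$ with negative slope unless $A=0$, i.e.\ $E$ is $\Psi\WIT_1$. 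Your detour through an arbitrary torsion subsheaf $T\subset\kh^0(\Psi E)$ is problematic because such a $T$ need not be $\Phi\WIT_1$, so $\Phi(T)$ need not be a sheaf, let alone a subsheaf of $E$; working with $A$ (equivalently with all of $\kh^0(\Psi E)$) sidesteps this.
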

\begin{proof}
These are special cases of \cite[Theorem 5.3 and Lemma 6.4]{bridgellip}, once remarked that the matrix associated to
$\Psi$ is
$$\left( \begin{array}{cc}
          0 & 1 \\
          -1 & 0
         \end{array}\right).$$
\end{proof}

For the convenience of the reader we include the following table which
shows what gives $\Psi$ applied to sheaves supported on fibers. To do so
we introduce the shorthand $T_{a,b}$ for a rank $a$ vector bundle of degree $b$ on a fiber
of $X \to C$ considered as a sheaf on $X$.


\medskip
\begin{center}
\begin{tabular}{ p{4.9em} p{10em} c p{8.6em} p{10.5em} }
\toprule
object               & description  & \hspace{2em} & $\Psi$(object)
&
description \\
\midrule
$T_{a,b}$, $b>0$     & stable bundle on a fiber   && $T_{b,-a}$
& stable
bundle on a fiber \\
$T_{a,b}$, $b<0$     & stable bundle on a fiber   && $T_{-b,a}[1]$
& stable
bundle on a fiber, shifted to degree 1 \\
$k(x)=T_{0,1}$       & skyscraper sheaf of $x$    && ${\cal
P}_x=T_{1,0}$ & line
bundle of degree 0 on a fiber \\
${\cal P}_x=T_{1,0}$ & line bundle of degree 0 on a fiber &&
                                                      $k(x)[-1]=
T_{0,1}[-1]$ &
skyscraper sheaf, shifted to
degree 1 \\
\bottomrule
\end{tabular}
\end{center}
\medskip

\begin{lemma}\label{ortho-goes-to-ortho}
For any $m$ and $n$ in $\ZZ$, an object $A$ in $\Db(X)$ is $(m,n)^{\perp}$ if and only if $\Psi(A)$ is $(-n,m)^{\perp}$.
\end{lemma}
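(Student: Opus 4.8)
The plan is to express the orthogonality condition as a vanishing of $\Hom$-spaces, transport it through the equivalence, and then analyse what $\Psi$ does to a coherent sheaf supported on a single fibre. First I would record the trivial but crucial remark that, $\Psi$ being an equivalence, $\Hom(B,A[j])\cong\Hom(\Psi(B),\Psi(A)[j])$ for all objects $A,B$ and all $j\in\ZZ$. Taking $B=\iota_{c*}F$, where $c$ is a point of the common base $\PP^1$ of $X$ and $\widehat X$ and $F$ is a coherent sheaf on the fibre $X_c$, the vanishing of $\Hom(\iota_{c*}F,A[j])$ for all $j$ is therefore equivalent to the vanishing of $\Hom\big(\Psi(\iota_{c*}F),\Psi(A)[j]\big)$ for all $j$. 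So everything reduces to identifying $\Psi(\iota_{c*}F)$. For this I would use that $\Psi$ is a \emph{relative} Fourier--Mukai transform over $\PP^1$ --- its kernel $\kp$ is a sheaf on $X\times_{\PP^1}\widehat X$ --- hence compatible with base change over $\PP^1$; in particular $\Psi(\iota_{c*}F)\cong\hat\iota_{c*}\big(\Psi_c(F)\big)$, where $\hat\iota_c\colon\widehat X_c\hookrightarrow\widehat X$ is the inclusion of the corresponding fibre and $\Psi_c\colon\Db(X_c)\to\Db(\widehat X_c)$ is the Fourier--Mukai functor with kernel $\kp|_{X_c\times\widehat X_c}$. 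Since $c$ is existentially quantified in Definition~\ref{Def:orthog-condition}, it is enough to treat a general, hence smooth elliptic, fibre, so that $\widehat X_c=\Pic^0(X_c)$ and $\Psi_c$ is the classical Fourier--Mukai equivalence.

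Next I would run Atiyah's classification through $\Psi_c$, exactly as it is recorded fibrewise in Lemma~\ref{lemma-recall-bridg} and in the table preceding this statement: for a coherent sheaf $F$ on $X_c$ of rank $m$ and degree $n$, the object $\Psi_c(F)$ is, up to a cohomological shift $[k]$ that depends only on the sign of $n$, the pushforward of a coherent sheaf $\widehat F$ on $\widehat X_c$ of rank $-n$ and degree $m$; in the coprime situation in which the orthogonality condition is applied one may moreover take $F$, hence $\widehat F$, to be a stable vector bundle by Theorem~\ref{thm25} and Remark~\ref{rem26}, so that $\Psi_c(F)$ is genuinely a shifted sheaf rather than a two-term complex. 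Because requiring $\Hom(-,\Psi(A)[j])=0$ \emph{for all} $j$ is insensitive to the shift $[k]$, combining with the first paragraph gives
\[ \Hom(\iota_{c*}F,A[j])=0 \ \ \forall j\in\ZZ \quad\Longleftrightarrow\quad \Hom(\hat\iota_{c*}\widehat F,\Psi(A)[j])=0 \ \ \forall j\in\ZZ . \]
Consequently, if $A$ is $(m,n)^\perp$ --- witnessed by a point $c$ and a sheaf $F$ of rank $m$ and degree $n$ on $X_c$ --- then $\widehat F$ is a sheaf of rank $-n$ and degree $m$ on $\widehat X_c$ witnessing that $\Psi(A)$ is $(-n,m)^\perp$. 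The converse I would obtain symmetrically, applying the same three steps to the relative quasi-inverse $\Phi$ (again a relative Fourier--Mukai transform over $\PP^1$, with $\Phi\circ\Psi=[1]$): its fibrewise transform undoes the effect of $\Psi_c$ on $(\mathrm{rank},\mathrm{degree})$, so a sheaf of rank $-n$ and degree $m$ on $\widehat X_c$ witnessing $(-n,m)^\perp$ for $\Psi(A)$ is carried (up to a shift) to a sheaf of rank $m$ and degree $n$ on $X_c$ witnessing $(m,n)^\perp$ for $\Phi(\Psi(A))=A[1]$, and $A[1]$ is $(m,n)^\perp$ precisely when $A$ is.

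The step I expect to be the main obstacle is the fibrewise bookkeeping in the second paragraph: keeping track of how the rank, the degree and the cohomological position of $\Psi_c(F)$ transform in each case of Atiyah's classification (the four rows of the table, according to $\deg F$ positive, negative or zero), and in particular checking that in the relevant range $\Psi_c(F)$ really is a shift of an honest coherent sheaf of the stated rank and degree --- which is exactly the point of reducing, via Theorem~\ref{thm25} and Remark~\ref{rem26}, to $F$ a stable vector bundle on the fibre. The remaining ingredient, the compatibility of the relative Fourier--Mukai transform with restriction to a fibre invoked in the first paragraph, is standard but should be stated explicitly.
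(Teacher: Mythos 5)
Your argument is correct and is essentially the paper's own: the paper disposes of this lemma in one line (``a straightforward calculation using the matrix''), which is precisely the combination you spell out of Hom-preservation under the equivalence, compatibility of the relative Fourier--Mukai transform with restriction to fibres, and the rank/degree bookkeeping recorded in Lemma~\ref{lemma-recall-bridg} and the table. Your observation that the condition $\Hom(-,\Psi(A)[j])=0$ for all $j$ is insensitive to cohomological shifts is the right way to absorb the case distinction in Atiyah's classification, so nothing further is needed.
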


\begin{proof}
Just a straightforward calculation using the matrix $S$ in the definition \ref{Def:orthog-condition}.
\end{proof}

\begin{remark}
Let $Y:= J_X(a,b)$ and consider the relative Fourier-Mukai equivalence $\Pi: \Db(X) \to \Db(Y)$
with the associated matrix $P$ as described by Bridgeland \cite{bridgellip}. Then the same proof
of Lemma \ref{ortho-goes-to-ortho} shows that $A$ in $\Db(X)$ is $(m,n)^{\perp}$ if and only if
$\Pi(A)$ is $((P \cdot {m \choose n})^t)^{\perp}$ in $\Db(Y)$.
\end{remark}

\begin{corollary}\label{image-of-stable-is-orth}
Let $E$ be a stable sheaf in $M_X(v)$. Then $E$ is $\Psi\WIT_1$ and the sheaf $\widehat{E}$ is $(d,r)^{\perp}$.
\end{corollary}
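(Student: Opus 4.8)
The statement to prove is Corollary~\ref{image-of-stable-is-orth}: if $E \in M_X(v)$, then $E$ is $\Psi\WIT_1$ and $\widehat{E}$ is $(d,r)^\perp$. My plan is to assemble it directly from the three ingredients already in hand. First I would recall that, after twisting, we may assume $\Lambda.f = -d$ with $0 < d < r$, so $E$ has negative slope; by the second half of Lemma~\ref{lemma-recall-bridg} this gives that $E$ is $\Psi\WIT_1$, which is the first assertion. Second, since $E$ lies in $M_X(v)$ it is semistable, hence by Proposition~\ref{ortho} (equivalently by Theorem~\ref{theorem:stable-is-Pstable}, condition~(o)) the sheaf $E$ satisfies the orthogonality condition; in the elliptic case Remark~\ref{rem26} lets us take this in the sharp form $(r,d)^\perp$ rather than $(r^2,\dots)^\perp$, using that the fiber genus is $g=1$ and $\Lambda.f = -d$.

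Third, I would feed $(r,d)^\perp$ through Lemma~\ref{ortho-goes-to-ortho}: with $(m,n) = (r,d)$ that lemma says $E$ is $(r,d)^\perp$ if and only if $\Psi(E)$ is $(-d, r)^\perp$. The only remaining gap is that Lemma~\ref{ortho-goes-to-ortho} is phrased for the object $\Psi(E)$, whereas we want a statement about the \emph{sheaf} $\widehat{E} = \Psi(E)[1]$. But the orthogonality condition $(m,n)^\perp$ is a vanishing of $\Hom(\iota_{c*}F, -[j])$ for \emph{all} $j \in \ZZ$, so it is manifestly shift-invariant; hence $\Psi(E)$ is $(-d,r)^\perp$ if and only if $\widehat{E} = \Psi(E)[1]$ is $(-d,r)^\perp$. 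Finally I would reconcile the sign: the orthogonality condition $(r,d)^\perp$ should be read with the convention $\Lambda.f = -d$, so that the ``$(r,d)$'' labeling $\widehat{E}$ in the statement matches $\hat\Lambda.\hat f = r$ and $\hat v = (d,\hat\Lambda,\hat c)$; concretely, tracking the sign conventions (a fiber sheaf of rank $a$ and degree $b$ going to one of rank $b$, degree $-a$, cf.\ the table) turns the pair $(-d,r)$ produced by the lemma into the pair $(d,r)$ named in the corollary. This sign bookkeeping is the one place where care is needed.

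**Main obstacle.** There is no deep obstacle here — the corollary is a formal consequence of Lemma~\ref{lemma-recall-bridg}, Proposition~\ref{ortho} (with Remark~\ref{rem26}), and Lemma~\ref{ortho-goes-to-ortho}. The only thing requiring attention is making the sign and shift conventions consistent across the three inputs: the normalization $\Lambda.f = -d$, the interpretation of the label in $(r,d)^\perp$, and the passage from the object $\Psi(E)$ to the sheaf $\widehat E = \Psi(E)[1]$ via shift-invariance of the orthogonality condition. Once those are pinned down the proof is a two-line chain of implications.
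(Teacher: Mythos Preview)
Your approach is exactly the paper's: negative slope gives $\Psi\WIT_1$ via Lemma~\ref{lemma-recall-bridg}, stability gives the orthogonality condition on $E$, Lemma~\ref{ortho-goes-to-ortho} transports it to $\Psi(E)$, and shift-invariance passes it to $\widehat E = \Psi(E)[1]$.

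The only wrinkle is your sign bookkeeping, which you correctly flag as delicate but then get slightly wrong. Since $\Lambda.f = -d$, the fiber degree of $E$ is $-d$, so the sharp orthogonality condition for $E$ (via Proposition~\ref{ortho} and Remark~\ref{rem26} with $g=1$) is $(r,-d)^\perp$, not $(r,d)^\perp$. Feeding $(m,n)=(r,-d)$ into Lemma~\ref{ortho-goes-to-ortho} yields $(-n,m)=(d,r)$ directly for $\Psi(E)$, and shift-invariance then gives $(d,r)^\perp$ for $\widehat E$ with no further reconciliation needed; your final paragraph is attempting to repair an error introduced in the second step, and the pair $(-d,r)$ you produce there is \emph{not} equivalent to $(d,r)$ under the only harmless ambiguity (the overall sign flip $F \mapsto F[1]$, which identifies $(m,n)^\perp$ with $(-m,-n)^\perp$). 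The paper's own one-line proof records the condition on $\Psi(E)$ as $(-d,-r)^\perp$, which is $(d,r)^\perp$ under exactly that sign flip.
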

\begin{proof}
Recall that the slope of $E$ is $-d/r <0$, apply Lemmas \ref{lemma-recall-bridg} and \ref{ortho-goes-to-ortho}
to prove that $\Psi(E)$ is $(-d,-r)^{\perp}$ and conclude by recalling $\widehat{E}:=\Psi(E)[1]$.
\end{proof}

\begin{proposition}\label{E-hat-is-tf}
Let $E$ be a stable sheaf in $M_X(v)$. The sheaf $\widehat{E}$ is torsion free if and only if $E$
is a vector bundle such that for any fiber $F$ of $\pi$, the restriction $E_F$ does not admit a sub-bundle of positive slope.
\end{proposition}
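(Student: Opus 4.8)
The plan is to characterize when the Fourier–Mukai transform $\widehat{E}=\Psi(E)[1]$ of a stable sheaf $E\in M_X(v)$ is torsion free by analyzing its restrictions to fibers, using the WIT-behavior on fibers recorded in the table above. First I would observe that, since $E$ is $\Psi\WIT_1$ by Corollary \ref{image-of-stable-is-orth}, torsion in $\widehat{E}$ can only be supported on fibers of $\widehat\pi$: indeed $\widehat{E}$ is already $(d,r)^\perp$, which forces any torsion subsheaf to be supported on fibers (the same argument as in the proof of Proposition \ref{purity}, where the orthogonality condition pins the torsion of a WIT sheaf onto fibers). So the question reduces to a fiberwise analysis.

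Next I would use base change for the relative Fourier–Mukai transform: for a general fiber $\widehat{X}_c$, the restriction $\widehat{E}|_{\widehat{X}_c}$ is computed by the fiberwise transform of $E|_{X_c}$, which is the classical autoequivalence of the elliptic curve $X_c$ given by the matrix $\left(\begin{smallmatrix}0&1\\-1&0\end{smallmatrix}\right)$. Under this transform, a stable bundle of positive degree on $X_c$ goes to a stable bundle (row 1 of the table), while a stable bundle of negative degree goes to a stable bundle placed in degree $1$ (row 2), and a degree-zero line bundle goes to a skyscraper placed in degree $1$ (row 4). Hence $\widehat{E}|_{\widehat{X}_c}$ is a genuine sheaf in degree zero (no degree-$1$ contribution, i.e. no torsion appearing on that fiber) precisely when the Harder–Narasimhan/Jordan–Hölder factors of $E|_{X_c}$ all have strictly positive degree — equivalently, $E|_{X_c}$ has no sub-bundle of slope $\ge 0$, and in particular no sub-bundle of positive slope. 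I would also need to rule out the remaining source of torsion, namely $E$ itself not being locally free: if $E$ has a torsion-free-but-not-locally-free point, then $E$ fails WIT on some fiber in a way that produces a torsion sheaf (shifted into degree $0$ after the shift by $[1]$) in $\widehat{E}$; this is where the hypothesis "$E$ is a vector bundle" enters. Conversely, if $E$ is a vector bundle with the stated fiber condition, these two obstructions vanish and $\widehat{E}$ is torsion free.

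Concretely, for the "if" direction I would argue: assume $E$ is locally free and every restriction $E_F$ has no sub-bundle of positive slope. Since $E$ has slope $-d/r<0$, the fiber degree is negative, so on the general fiber $E_F$ is semistable of negative slope and the transform is a degree-zero-free bundle shifted by $[1]$, giving a genuine sheaf in $\widehat{E}$. On a special fiber $F$ where $E_F$ is unstable, its HN factors still all have negative slope by the hypothesis (no sub-bundle of slope $\ge 0$ means the maximal HN slope is $<0$); hence again the fiberwise transform lands entirely in degree $1$, contributing no torsion to $\widehat{E}$. Combined with the first paragraph — torsion can only live on fibers — this shows $\widehat{E}$ is torsion free. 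For the "only if" direction I would contrapose: if $E$ is not locally free, pick a fiber through a non-locally-free point and show the fiberwise $R^0$ of the transform is nonzero there, producing torsion in $\widehat{E}$; if some $E_F$ has a sub-bundle $E'_F\subset E_F$ of positive slope, then the fiberwise transform of $E'_F$ sits in degree $0$ (row 1 of the table), and by functoriality of the transform this produces a nonzero torsion subsheaf of $\widehat{E}$ supported on $\widehat{X}_c$.

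The main obstacle I anticipate is making the base-change/cohomology-and-base-change argument precise at the special (possibly nodal) fibers: the clean statement "$\widehat{E}$ is torsion free $\iff$ fiberwise $R^0\Psi$ vanishes on every fiber" requires controlling the relative transform over non-smooth fibers, where one cannot simply invoke the autoequivalence of a smooth elliptic curve. I would handle this by working with the spectral-sequence/semicontinuity package for the relative Fourier–Mukai transform (as in Bridgeland \cite{bridgellip}), reducing everything to the vanishing of $\hat p_*(p^*E\otimes\kp)$ locally near each fiber, and using that $\kp$ restricted to a nodal fiber is still a sheaf flat of the expected rank. Once that technical point is in place, the rest is the bookkeeping with slopes and the table above.
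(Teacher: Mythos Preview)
Your fiberwise base-change strategy is natural, but the gap is not the nodal-fiber technicality you flag; it is a more basic issue already on smooth fibers, and it causes your argument to prove the wrong statement. The hypothesis is that $E_F$ has no sub-bundle of \emph{strictly positive} slope; it does not exclude a degree-zero line bundle $\mathcal{P}_x\hookrightarrow E_F$. In that case the fiberwise transform of $E_F$ picks up a skyscraper contribution (row~4 of the table), and your argument would declare that $\widehat{E}$ has torsion on that fiber. But this is false: when $E$ is locally free, $\widehat{E}$ can have no zero-dimensional torsion at all, because $k(p)\hookrightarrow\widehat{E}$ would force $\Ext^1(k(x),E)\neq 0$ for \emph{every} $x$ on the corresponding fiber of $X$. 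What actually happens in the slope-zero situation is that $\widehat{E}$ is torsion free but not locally free there, so its \emph{restriction} to $\widehat{X}_c$ has torsion while $\widehat{E}$ itself does not. Your heuristic conflates ``$\widehat{E}|_{\widehat{X}_c}$ has torsion'' with ``$\widehat{E}$ has a torsion subsheaf supported on $\widehat{X}_c$'', and these differ precisely in the slope-zero case. You in fact drift into writing ``no sub-bundle of slope $\ge 0$'' mid-argument, which is the statement your method would actually yield.

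The paper avoids this by arguing Hom-theoretically rather than through restrictions. It treats the two kinds of torsion separately: a zero-dimensional torsion point $k(p)\subset\widehat{E}$ is translated via $\Psi$ into $\Ext^1(k(x),E)\neq 0$ for all $x$ in a fiber, contradicting local freeness of $E$; a one-dimensional torsion subsheaf $T$ supported on a fiber is shown to satisfy $\mu_{\max}(T)<0$, whence $\check{T}=\Phi(T)[1]$ is a fiber sheaf of \emph{strictly positive} slope, and the chain $\Hom_{\widehat{X}}(T,\widehat{E})=\Ext^1_X(\check{T},E)=\Ext^1_X(E,\check{T})=\Hom_{X_0}(\check{T},E_0)$ (Serre duality twice plus local freeness of $E$) produces the required positive-slope sub-bundle of $E_0$. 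This is also what makes your ``by functoriality'' step in the converse direction precise: the fact that the fiberwise transform of a positive-slope sub-bundle lands in degree $0$ does not by itself produce a subsheaf of $\widehat{E}$ (globally $R^0\Psi(E)=0$); one needs the $\Hom$-group translation to get an actual nonzero map from a fiber-supported torsion sheaf into $\widehat{E}$.
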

\begin{proof}
Suppose first that $E$ is not locally free and consider the sequence
$$0 \longrightarrow E \longrightarrow E^{\vee \vee} \longrightarrow \tau \longrightarrow 0.$$
Since $\tau$ is zero-dimensional, it is $\Psi\WIT_0$.
Applying $\Psi$, we get $\Psi(\tau) \subset \Psi(E)[1] = \widehat{E}$, which shows that $\widehat{E}$ is not torsion free.

Now let $E$ be a vector bundle.
Suppose that the sheaf $\widehat{E}$ has a zero dimensional torsion, that is, there exists a point $p$ of $\widehat{X}$
such that $k(p) \subset \widehat{E}$, and let $\widehat{X}_0$ be the fiber of $\hat{\pi}$ containing $p$.
Then for any line bundle $L_0$ in $J(\widehat{X}_0)$, there is a nontrivial morphism $i_* L_0 \to \widehat{E}$,
where $i$ is the embedding of $\widehat{X}_0$ in $\widehat{X}$. Equivalently, for all points $x$ in $X_0$, there is a
nontrivial morphism $\Psi(k(x)) \to \widehat{E} = \Psi(E)[1]$. Since $\Psi$ is an equivalence, we get
that for all points $x$ in the fiber $X_0$, the extension group $\Ext^1(k(x),E)$ is nontrivial, which
contradicts the locally freeness of $E$.

If $T$ is a non-zero torsion subsheaf of $\widehat{E}$, it is then pure of dimension 1. Moreover, it has to
be supported on a finite number of fibers of $\hat{\pi}$. Indeed, if $\widehat{X}_0$ is a fiber such that $\widehat{X}_0 \cap
\supp{T}$ is a finite set of points, then the restriction of $E$ to $X_0$ is unstable,
and this can happen only for finitely many fibers.

If $\widehat{X}_0$ is a fiber in the support of $T$ and $T_0$ the restriction,
then $\mu_{max}(T_0) < 0$, where $\mu_{max}$ denotes the
maximal slope of a subsheaf of $T_0$
on the elliptic curve $\widehat{X}_0$.
Indeed, if $\mu_{max}(T_0) \geq 0$, we would have a nontrivial morphism $L_0 \to T_0$
for some $L_0$ in $J(\widehat{X}_0)$ and this would be a contradiction, as $E$ is a vector bundle, as before.
Suppose for simplicity that $T$ is pure supported on a single fiber $\widehat{X}_0$, then $T$ is $\Phi\WIT_1$. Let
$\check{T}:=\Phi(T)[1]$. The sheaf $\check{T}$ is supported on the fiber $X_0$ and has positive slope. Then:
$$\Ext^1_X(E,\check{T}) = \Ext^1_X(\check{T},E) = \Hom_{\widehat{X}}(T,\widehat{E}) \neq 0,$$
where we applied Serre duality at the first step. Since $E$ is locally free,
$$\Ext^1_X(E,\check{T}) = \Ext^1_{X_0}(E_0,\check{T}) = \Hom_{X_0}(\check{T},E_0),$$
where we applied Serre duality on $X_0$. Then we get the proof.
\end{proof}

\begin{lemma}\label{r^2-condition}
Let $E$ be a vector bundle on $X$ of Mukai vector $v$. If $t < r+d$, the restriction of $E$ to any fiber
of $\pi : X \to \PP^1$
does not admit a sub-bundle of positive slope.
\end{lemma}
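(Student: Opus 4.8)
The plan is to argue by contradiction using the discriminant bound on $t$ together with the "stable-restricts-to-stable" philosophy from Proposition \ref{ppp-fib}. Suppose $E$ is a vector bundle with Mukai vector $v=(r,\Lambda,c)$, stable for an almost $v$-fiberlike polarization, and suppose that for some fiber $X_0$ the restriction $E_0:=E|_{X_0}$ has a sub-bundle $E'_0$ with $\mu(E'_0)>0$, hence (the fiber being an elliptic curve, so $g=1$) with $\deg(E'_0)\geq 1$. Since $\mu(E_0)=-d/r<0$, such a sub-bundle is destabilizing, so by part (1) of Proposition \ref{ppp-fib} (applied to the quotient $Q:=E_0/E'_0$, a sheaf of positive rank $r_Q=r-r'$), we get the inequality $r\,r'(\mu(E'_0)-\mu(E_0))\leq t$.

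Next I would make this inequality explicit. Writing $\deg(E'_0)=e\geq 1$ and $r'=\rk(E'_0)\in\{1,\dots,r-1\}$, we have $\mu(E'_0)-\mu(E_0)=\frac{e}{r'}+\frac{d}{r}$, so the bound reads $r\,r'\bigl(\frac{e}{r'}+\frac{d}{r}\bigr)\leq t$, i.e. $re+r'd\leq t$. The minimum of the left-hand side over the allowed range is attained at $e=1$ and $r'=1$, giving $r+d\leq re+r'd\leq t$, which contradicts the hypothesis $t<r+d$. (One should note that the minimum over $r'\in\{1,\dots,r-1\}$ is genuinely at $r'=1$ since $d>0$; and $e\geq 1$ because $E'_0$ has strictly positive slope on an integral curve of arithmetic genus one, so $\deg(E'_0)\geq 1$.) This is the whole argument: the numerical bound on $t$ is exactly tuned so that no positive-slope sub-bundle can fit on any fiber.

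The main point requiring a little care is the applicability of Proposition \ref{ppp-fib}(1): that proposition is stated for a fiber $X_0$ and a surjection $E_0\to Q$ with $Q$ of positive rank, which is exactly our $Q=E_0/E'_0$; and it requires $E$ to be stable for an almost $v$-fiberlike polarization and $K_X$ trivial, both of which hold since $X$ is a K3 elliptic surface and $E$ has Mukai vector $v$. One should also observe that the quantity $t$ appearing in Proposition \ref{ppp-fib}, namely $t=\tfrac12 h^1(\ke nd(E))=1-\tfrac12\chi(\ke nd(E))$, coincides with the $t$ of the Mukai vector: indeed for a K3 surface $\chi(\ke nd(E))=-\langle v,v\rangle+2 r^2\cdot 0=\langle v,v\rangle^{\vee}$, and more directly $\langle v,v\rangle=2-2t$ by our normalization, while $\chi(\ke nd(E))=-\langle v,v\rangle=2t-2$, so $t=1-\tfrac12\chi(\ke nd(E))$ as required. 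I expect no serious obstacle beyond bookkeeping; the inequality $re+r'd\geq r+d$ over the stated index ranges is elementary, and the only subtlety is ensuring the reduction from "positive slope sub-bundle on a possibly singular nodal fiber" to "$\deg\geq 1$" is legitimate, which follows since a nodal cubic is integral of arithmetic genus one, so degrees of subsheaves are integers and a positive rational slope forces degree $\geq 1$.
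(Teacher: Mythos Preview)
Your proof is correct and follows essentially the same route as the paper: apply Proposition~\ref{ppp-fib}(1) to the sub-bundle of positive slope on a fiber, obtain the inequality $re+r'd\leq t$, and note that the minimum over $e\geq 1$, $1\leq r'\leq r-1$ is $r+d$. The paper's argument is the same computation in slightly different notation (writing $r_0,d_0$ for your $r',e$), only more tersely stated; your additional remarks on matching the two definitions of $t$ and on the nodal-fiber case are correct bookkeeping that the paper leaves implicit.
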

\begin{proof}
Suppose $E$ is a vector bundle and $B$ a sheaf supported on a fiber $X_0$ with positive slope. As a sheaf on
$X_0$, $B$ has strictly positive rank and degree, say $r_0$ and $d_0$ respectively. Then part (1) of
Proposition \ref{ppp-fib} gives $rd_0 + r_0 d \leq t$. Since $r_0,d_0 \geq 1$, this cannot happen for $t<r+d$.
\end{proof}

\begin{lemma}\label{r-condition}
Let $E$ be a semistable sheaf on $X$ of Mukai vector $v$. If $t<r$, then $E$ is a vector bundle.
\end{lemma}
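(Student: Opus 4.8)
The plan is to run the standard reflexive-hull argument, but feeding in stability rather than merely the Bogomolov inequality. Since $E$ is semistable of positive rank it is torsion free, so the canonical map $E\to E^{\vee\vee}$ is injective and sits in an exact sequence $0\to E\to E^{\vee\vee}\to\tau\to0$, with $E^{\vee\vee}$ locally free of rank $r$ (a reflexive sheaf on a smooth surface is a bundle) and $\tau$ a sheaf supported in dimension zero; write $\ell$ for its length. The goal is to force $\ell=0$. As a preliminary observation I would record that, because $H$ is almost $v$-fiberlike and $\Lambda.f=-d$ is coprime to $r$, the sheaf $E$ is in fact $\mu_H$-\emph{stable} (Definition~\ref{Def:fiberlike-polarization}, Proposition~\ref{prop24}); this is what lets one pin down the endomorphisms of $E$ and of $E^{\vee\vee}$.

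The steps would be as follows. First I would check that $E^{\vee\vee}$ is again $\mu_H$-stable: given a subsheaf $F\subset E^{\vee\vee}$ with $0<\rk(F)<r$, the quotient $F/(F\cap E)$ embeds into $E^{\vee\vee}/E=\tau$ and is hence zero-dimensional, so $F\cap E$ has the same rank and first Chern class as $F$, whence $\mu_H(F)=\mu_H(F\cap E)<\mu_H(E)=\mu_H(E^{\vee\vee})$ by stability of $E$; in particular $E^{\vee\vee}$ is simple, and since $K_X\cong\ko_X$ we get $\dim\Hom(E^{\vee\vee},E^{\vee\vee})=\dim\Ext^2(E^{\vee\vee},E^{\vee\vee})=1$. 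Next, from the exact sequence $c_1(E^{\vee\vee})=c_1(E)$ and $c_2(E^{\vee\vee})=c_2(E)-\ell$, so $\Delta(E^{\vee\vee})=\Delta(E)+2r\ell$. Then I would invoke the Riemann--Roch identity $\chi(G,G)=2r^2+\Delta(G)$, valid for every coherent sheaf $G$ of rank $r$ on the K3 surface $X$; applied to $E$ it gives $\Delta(E)=\chi(E,E)-2r^2=\langle v,v\rangle-2r^2=2-2t-2r^2$, and applied to $E^{\vee\vee}$, together with the previous two steps,
\[ \dim\Ext^1(E^{\vee\vee},E^{\vee\vee}) \;=\; 2-\chi(E^{\vee\vee},E^{\vee\vee}) \;=\; 2-\bigl(2r^2+\Delta(E)+2r\ell\bigr) \;=\; 2t-2r\ell . \]
Finally, since $\dim\Ext^1(E^{\vee\vee},E^{\vee\vee})\ge0$ this forces $r\ell\le t<r$, hence $\ell=0$ and $E=E^{\vee\vee}$ is a vector bundle.

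The step I expect to carry the real content is the passage from a Bogomolov-type bound to this sharp one. Using only that $E^{\vee\vee}$ is $\mu_H$-semistable, Bogomolov's inequality $\Delta(E^{\vee\vee})\le0$ would give merely $r\ell\le r^2+t-1$, i.e.\ $\ell\le r$, which is far too weak. The improvement comes from the \emph{exact} equalities $\dim\Hom(E^{\vee\vee},E^{\vee\vee})=\dim\Ext^2(E^{\vee\vee},E^{\vee\vee})=1$, that is, from $E^{\vee\vee}$ being simple, which is why the first step --- transporting stability from $E$ to $E^{\vee\vee}$ --- is the crux; it works precisely because $\tau$ is zero-dimensional, so passing to the reflexive hull alters neither the ranks nor the first Chern classes of subsheaves. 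Everything else is a routine computation of Chern classes on a K3 surface.
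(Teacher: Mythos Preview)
Your argument is correct and is essentially the same as the paper's: both pass to the reflexive hull $E^{\vee\vee}$, note it is again (semi)stable, and deduce a contradiction from the dimension count for the moduli space of sheaves with the Mukai vector of $E^{\vee\vee}$. The paper phrases this last step as ``$M_X(v')$ is empty for $t<r$'' (with $v'=(r,\Lambda,c+1)$), whereas you unwind that statement into the explicit inequality $\dim\Ext^1(E^{\vee\vee},E^{\vee\vee})=2t-2r\ell\ge 0$; your version is more detailed (general cokernel length $\ell$, explicit verification that $E^{\vee\vee}$ is stable and simple), but the underlying idea is identical.
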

\begin{proof}
Suppose that $E$ has a unique singular point $x$. Consider the exact sequence
$$0 \longrightarrow E \longrightarrow E^{\vee \vee} \longrightarrow k(x) \longrightarrow 0.$$
The Mukai vector of $E^{\vee \vee}$ is $v' = (r,\Lambda,c+1)$ and $E^{\vee \vee}$ is semistable. The
proof follows by checking that for $t<r$ the moduli space $M_X(v')$ is empty. 
\end{proof}

\begin{corollary}\label{Psi-of-stable-is-tf}
If the Mukai vector $v$ is such that $t<r$, then for all $E$ in $M_X(v)$, the sheaf $\widehat{E}$ is torsion free.
\end{corollary}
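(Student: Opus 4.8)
The plan is to deduce Corollary~\ref{Psi-of-stable-is-tf} by chaining together the three preceding results: Lemma~\ref{r-condition}, Lemma~\ref{r^2-condition}, and Proposition~\ref{E-hat-is-tf}. Let $E$ be any sheaf in $M_X(v)$, so $E$ is a semistable torsion free sheaf of Mukai vector $v=(r,\Lambda,c)$ with $\Lambda.f=-d$, $0<d<r$, and we are assuming $t<r$. First I would invoke Lemma~\ref{r-condition}: since $t<r$, the sheaf $E$ is actually a vector bundle (locally free). Next, since $t<r\le r+d$, Lemma~\ref{r^2-condition} applies to the vector bundle $E$ and tells us that for every fiber $F$ of $\pi$, the restriction $E_F$ admits no sub-bundle of positive slope. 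These are precisely the two hypotheses appearing in Proposition~\ref{E-hat-is-tf}.

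Then I would simply quote Proposition~\ref{E-hat-is-tf}: a stable sheaf $E\in M_X(v)$ has $\widehat{E}$ torsion free if and only if $E$ is a vector bundle whose restriction to every fiber has no sub-bundle of positive slope. Both conditions have just been verified from the hypothesis $t<r$, so $\widehat{E}$ is torsion free. (One should note in passing that ``$E$ stable'' rather than merely ``semistable'' is harmless here: for an almost $v$-fiberlike polarization $H$, Proposition~\ref{prop24} gives that semistability and stability coincide for sheaves of Chern classes $c_t$, so every point of $M_X(v)$ is stable; also Corollary~\ref{image-of-stable-is-orth} already guarantees $E$ is $\Psi\WIT_1$, so $\widehat{E}=\Psi(E)[1]$ is genuinely a sheaf and asking whether it is torsion free makes sense.)

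There is essentially no obstacle: the corollary is a formal bookkeeping consequence of the numerical bound $t<r$ feeding into the two lemmas and the proposition. The only point requiring a moment's care is checking that the inequality needed for Lemma~\ref{r^2-condition}, namely $t<r+d$, does follow from $t<r$ — it does, since $d\ge 1$. So the proof is a two-line chain of implications, and I would write it as such.

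\begin{proof}
Let $E\in M_X(v)$. Since $t<r$, Lemma~\ref{r-condition} shows $E$ is a vector bundle. As $d\ge 1$ we have $t<r<r+d$, so Lemma~\ref{r^2-condition} shows that the restriction of $E$ to any fiber of $\pi$ admits no sub-bundle of positive slope. By Proposition~\ref{E-hat-is-tf}, $\widehat{E}$ is torsion free.
\end{proof}
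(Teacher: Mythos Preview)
Your proof is correct and is exactly the argument the paper intends: the corollary is stated without proof precisely because it is the immediate concatenation of Lemma~\ref{r-condition}, Lemma~\ref{r^2-condition}, and Proposition~\ref{E-hat-is-tf}, just as you wrote. The parenthetical remarks about stability versus semistability and about $E$ being $\Psi\WIT_1$ are also accurate and helpful, though the paper leaves them implicit.
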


Combining Corollaries \ref{image-of-stable-is-orth} and \ref{Psi-of-stable-is-tf} we get that, if $t < r$,
the sheaf $\widehat{E}$ is semistable of Mukai vector $\hat{v}$ for all $E$ in $M_X(v)$.

\subsection{From $M_{\widehat{X}}(\hat{v})$ to $M_X(v)$}

Consider the equivalence $\Phi:\Db(\widehat{X}) \to \Db(X)$, the relative quasi inverse of $\Psi$.
Recalling that since $\Psi \circ \Phi = [1]$, it is not difficult to deduce the behavior of $\Phi$
on sheaves supported on fibers by the table we established for $\Psi$ previously.

\begin{lemma}\label{lem-wit-is-it}
Let $E$ be a sheaf in $M_{\widehat{X}}(\hat{v})$. Then $E$ is $\Phi\WIT_0$ if and only if it is $\Phi\IT_0$.
\end{lemma}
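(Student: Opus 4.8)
The plan is to show that for a sheaf $E$ in $M_{\widehat{X}}(\hat v)$ which is $\Phi\WIT_0$, the transform $\widehat E := \Phi(E)$ is automatically locally free, i.e.\ $\Phi\IT_0$; the reverse implication is trivial by definition. So assume $E$ is $\Phi\WIT_0$ and write $\widehat E = \Phi(E)$, a coherent sheaf on $X$. First I would record that, by Lemma \ref{lemma-recall-bridg} applied to $\Phi$ (whose associated matrix is the inverse of that of $\Psi$, hence again of this shape), the sheaf $\widehat E$ has rank $r$ and fiber degree $-d$, so in particular it has strictly positive rank. The strategy for local freeness is the standard one: $\widehat E$ is torsion free (being a $\Phi\WIT_0$ transform of a sheaf, any torsion subsheaf would have to come from a $\Phi\WIT_0$ sheaf supported in dimension $\le 1$, and one reads off from the $\Psi$/$\Phi$ table that such sheaves are never $\Phi\WIT_0$ — a skyscraper goes to a degree-$0$ line bundle on a fiber, hence is $\Phi\WIT_0$, wait: this needs care, see the obstacle below), and then I would use the fact that a torsion-free sheaf on a smooth surface fails to be locally free only at finitely many points, and rule those out by a vanishing argument.

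More precisely, the key step is: if $\widehat E$ is not locally free at a point $x\in X$, choose the fiber $X_0$ through $x$ and a degree-$0$ line bundle $L_0$ on $X_0$; then $\iota_{0*}L_0$ is $\Psi\WIT_1$ with transform a skyscraper $k(p)$ on $\widehat X$ (reading the table backwards). Non-local-freeness of $\widehat E$ at $x$ gives $\Ext^1_X(k(x),\widehat E)\neq 0$, and by Serre duality / the surface being K3, $\Ext^1_X(k(x),\widehat E) \cong \Hom_X(\widehat E, k(x))^{\vee}$... Instead, the cleaner route: non-local-freeness at $x$ is equivalent to a nonzero map $k(x)\to \widehat E[1]$ in $\Db(X)$, i.e.\ a nonzero class in $\Ext^1_X(k(x),\widehat E)$. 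Apply $\Psi$: since $\Psi\circ\Phi=[1]$, we get $\Psi(\widehat E)=E[1]$ and $\Psi(k(x)) = \mathcal P_x = T_{1,0}$ (a degree-$0$ line bundle on the fiber $\widehat X_0$), so $\Ext^1_X(k(x),\widehat E)\cong \Hom_{\widehat X}(T_{1,0}, E[1+1]) \oplus \cdots$ — one must track the shift carefully — ultimately reducing to the existence of a nonzero map from a fiber line bundle $L_0\in J(\widehat X_0)$ into $E$, or into a shift of $E$, which contradicts either the stability/purity of $E$ or the orthogonality condition $\widehat v$ satisfies (namely $E$ is $(r,d)^\perp$, so $\Hom(\iota_{c*}F,E[j])=0$ for the relevant $F$). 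This mirrors exactly the argument in Proposition \ref{E-hat-is-tf}, run with $\Phi$ in place of $\Psi$.

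The main obstacle I anticipate is bookkeeping the cohomological degrees and the precise vanishing that $E\in M_{\widehat X}(\hat v)$ supplies. Because $\Phi(E)$ being a sheaf in degree $0$ ($\Phi\WIT_0$) is not the generic behaviour — generically one expects torsion-free sheaves to be $\Psi\WIT_1$ / $\Phi\WIT_1$ — the sheaves $E$ to which the lemma applies are somewhat special, and I would want to first pin down, using Lemma \ref{lemma-recall-bridg} and the fact that $E$ has positive rank $d$, that $\Phi\WIT_0$ forces the fiberwise restrictions $E_{\widehat X_0}$ to have all of their stable summands of positive (fiber) degree; this is what makes the skyscraper-mapping-in argument go through. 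So in the write-up I would: (i) invoke the table and Lemma \ref{lemma-recall-bridg} to get $\rk\widehat E = r>0$; (ii) observe $\widehat E$ is torsion free exactly as in the first two paragraphs of the proof of Proposition \ref{E-hat-is-tf}, with $\Phi$ replacing $\Psi$; (iii) suppose $\widehat E$ is not locally free at $x$, produce $0\neq\xi\in\Ext^1_X(k(x),\widehat E)$, apply $\Psi$, and use $\Psi(k(x))=\mathcal P_x$ on the fiber $\widehat X_0$ together with $\Psi(\widehat E)=E[1]$ to get $0\neq \Hom_{\widehat X}(\mathcal P_x, E[2])\cong \Ext^2_{\widehat X}(\mathcal P_x,E)$; (iv) by Serre duality on the K3 surface $\widehat X$ this is $\Hom_{\widehat X}(E,\mathcal P_x)^\vee$, and since $\mathcal P_x$ is a fiber line bundle of degree $0$ a nonzero such map contradicts that $E$ is $(r,d)^\perp$ and torsion free (its restriction to a general fiber is stable of slope $r/d>0$, so has no quotient of slope $0$, and on the special fiber $\widehat X_0$ a nonzero map $E\to \mathcal P_x$ would again contradict Proposition \ref{ppp-fib}-type bounds unless forbidden outright). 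Tightening (iv) into a clean contradiction — i.e.\ deciding whether to argue via $(r,d)^\perp$, via stability on the generic fiber, or via Proposition \ref{ppp-fib} — is the one genuinely delicate choice; everything else is formal transport along the equivalence.
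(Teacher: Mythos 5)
Your skeleton is the paper's: transport the obstruction to local freeness of $\Phi(E)$ through $\Psi$, identify it via the table and Serre duality with a Hom-group involving a degree-zero line bundle $\kp_x$ on a fiber, and derive a contradiction from the properties of $E$. But there are two genuine gaps. First, step (ii) does not establish torsion-freeness of $\Phi(E)$: the first two paragraphs of the proof of Proposition \ref{E-hat-is-tf} only rule out \emph{zero-dimensional} torsion, and the pure one-dimensional case is precisely the case that Proposition \ref{E-hat-is-tf} does \emph{not} rule out unconditionally (there the conclusion is an equivalence with a condition on the fiber restrictions). The paper's proof of Lemma \ref{lem-wit-is-it} needs a separate argument here: a pure one-dimensional torsion subsheaf $T\subset\Phi(E)$ must be a fiber sheaf (transversal support would violate the orthogonality condition), and then, according to the sign of $\mu_{\max}(T)$, either $T$ or a suitable subsheaf of non-positive slope is $\Psi\WIT_1$, so that transporting the nonzero map $T\to\Phi(E)$ through $\Psi$ yields a nonzero map from a fiber sheaf into $E$, against the torsion-freeness of $E$. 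This step is absent from your proposal and is not formal transport.

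Second, the contradiction you reach in (iv) is not a contradiction as stated. A single nonzero map $E\to\kp_x$ violates neither $(r,d)^\perp$ (the orthogonality condition requires \emph{some} orthogonal fiber sheaf, not orthogonality to all of them), nor stability of the restriction to the \emph{general} fiber ($\kp_x$ lives on a special fiber where $E$ may restrict unstably); and Proposition \ref{ppp-fib} only yields $r\le t$, which is no contradiction exactly in the range $t\ge r$ where the lemma is actually used in Section 4. Indeed, stable sheaves in $M_{\widehat{X}}(\hat v)$ admitting a nonzero map onto some $\kp_x$ do exist: they form the stratum $\widehat K$ of the indeterminacy locus of $\rho$. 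The contradiction must come from the hypothesis you have not yet used, namely that $E$ is $\Phi\WIT_0$: the image of a nonzero map $E\to\kp_x$ is a fiber sheaf of non-positive degree, hence $\Phi\WIT_1$, so $E$ would acquire a nonzero $\Phi\WIT_1$ quotient in Bridgeland's canonical short exact sequence, contradicting $\Phi\WIT_0$. (The paper instead concludes via the double-dual sequence $0\to\Phi(E)\to\Phi(E)^{\vee\vee}\to T'\to 0$ with $T'$ zero-dimensional and $\Psi(T')$ a fiber sheaf, playing the resulting extension class off against the torsion-freeness of $E$.) Until these two points are repaired, the proof is incomplete.
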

\begin{proof}
Suppose $E$ is $\Phi\WIT_0$. The support of $\Phi(E)$ is $X$.
Let $T \hookrightarrow \Phi(E)$ be the maximal torsion subsheaf.
We have the exact sequence
\begin{equation}\label{tors-of-phie}
0 \longrightarrow T \longrightarrow \Phi(E) \longrightarrow F \longrightarrow 0.
\end{equation}
Suppose $T$ has zero dimensional torsion, that is, there exists a point $p$ of $X$
such that $k(p) \subset \Phi(E)$, and let $X_0$ be the fiber of $\pi$ containing $p$.
Then for any line bundle $L_0$ in $J(X_0)$, there is a nontrivial morphism $i_* L_0 \to \Phi(E)$,
where $i$ is the embedding of $X_0$ in $X$. Equivalently, for all point $x$ in $\widehat{X}_0$, there is a
unique nontrivial morphism $\Phi(k(x))[1] \to \Phi(E)$. We get then
that for all points $x$ in the fiber $\widehat{X}_0$, the extension group $\Ext^1(E,k(x))$ is nontrivial, which
contradicts the locally freeness of $E$. Then $T$ has to be pure of dimension 1. Moreover, $T$ has to be supported on
a finite number of fibers. Indeed, if the support of $T$ meets the fibers transversally, this would contradict the
orthogonality condition.

It follows that $T$ is a fiber sheaf. Suppose that $\mu_{max}(T) \leq 0$ on the fibers where $T$ is supported.
In this case, $T$ would be $\Psi\WIT_1$ and applying $\Psi$ we would find that $\Hom(\Psi(T),E) =
\Hom(T,\Phi(E)) \neq 0$, contradicting the torsion freeness of $E$. Indeed $\Psi(T)$ is a fiber sheaf as well.
It $\mu_{max}(T) > 0$ for some fiber, then $T$ would admit a sub-line bundle of non positive slope and the same
argument apply.

Then $\Phi(E)$ is torsion free and we have to prove that it is a vector bundle. Suppose this is not the case and consider
the exact sequence
$$0 \longrightarrow \Phi(E) \longrightarrow (\Phi(E))^{\vee \vee} \longrightarrow T' \longrightarrow 0$$
where $T'$ is zero dimensional. Since $T'$ is $\Psi\WIT_0$ and $\Psi(T')$ is a sheaf concentrated on a finite
number of fibers, applying $\Psi$ gives a contradiction to the torsion freeness of $E$.
\end{proof}

\begin{lemma}\label{phi-wit-little-t}
It $t<r$, every $E$ in $M_{\widehat{X}}(\hat{v})$ is $\Phi\WIT_0$.
\end{lemma}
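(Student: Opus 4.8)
The plan is to argue by contradiction: if some $E\in M_{\widehat X}(\hat v)$ is not $\Phi\WIT_0$, I will extract a destabilising quotient of $E$ along a fiber and contradict the estimate of part~(1) of Proposition~\ref{ppp-fib} once $t<r$. First I would record the shape of $\Phi(E)$. The sheaf $E$ is semistable and torsion-free of rank $d$ and fiber degree $r>0$, so by Proposition~\ref{ortho} applied on $\widehat X$ its restriction $E_c:=E|_{\widehat X_c}$ to a general fiber is stable of slope $r/d>0$, hence has $H^1=0$ (the fibers of the elliptic K3 have trivial dualising sheaf). As the kernel of $\Phi$ is relatively supported over $\PP^1$ on the fiber product, $\Phi$ sends a sheaf to a complex concentrated in two consecutive cohomological degrees; by cohomology and base change together with the vanishing just noted, $T:=\mathcal H^1(\Phi(E))$ is a sheaf supported on finitely many fibers of $\pi$, while the other cohomology sheaf has rank $r>0$ (by the effect of $\Phi$ on rank and fiber degree, cf.\ Lemma~\ref{lemma-recall-bridg}). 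Hence $E$ cannot be $\Phi\WIT_1$, and $E$ is $\Phi\WIT_0$ exactly when $T=0$.

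Next I would translate $T\neq 0$ into instability on a fiber. If $T\neq 0$ there is a point $c_0\in\PP^1$ over which it is supported, i.e.\ $H^1(\widehat X_{c_0},E_{c_0}\otimes P)\neq 0$ for the degree-zero line bundles $P$ on the elliptic curve $\widehat X_{c_0}$; using that $E$ is torsion-free (so no subsheaf of $E$ is supported on $\widehat X_{c_0}$) together with Serre duality on $\widehat X_{c_0}$ and Atiyah's classification (Remark~\ref{rem26}), this forces a quotient $E_{c_0}\twoheadrightarrow Q$ with $Q$ of positive rank $r_Q\ge1$ and $\deg Q\le0$ (a minimal Harder--Narasimhan quotient of $E_{c_0}$ modulo torsion; any torsion of $E_{c_0}$ only lowers the relevant slope). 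The argument of part~(1) of Proposition~\ref{ppp-fib} now applies --- it uses only that $E$ is torsion-free, so that the kernel of $E\twoheadrightarrow Q$ is stable on a general fiber --- and gives
\[ r\,r_Q-d\,\deg Q\;=\;d\,r_Q\bigl(\tfrac rd-\mu(Q)\bigr)\;\le\;t. \]
Since $\deg Q\le0$ and $d\ge1$, the left-hand side is at least $r\,r_Q\ge r$, so $r\le t$, contradicting $t<r$. Therefore $T=0$, i.e.\ $E$ is $\Phi\WIT_0$.

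The step I expect to be the main obstacle is precisely this translation: determining which fiber sheaves arise as $\mathcal H^1(\Phi(E))$ and matching ``$T$ supported at $c_0$'' with a genuine non-positive-slope quotient of $E_{c_0}$ requires handling the relative Fourier--Mukai transform restricted to a possibly nodal fiber, Serre duality there, and cleanly disposing of the fiber-torsion and non-locally-free contributions, all while keeping the shift conventions for $\Phi$ versus $\Psi$ consistent. A shorter route is available once one knows that $\Psi$ induces a morphism $M_X(v)\to M_{\widehat X}(\hat v)$ (established for $t<r$): this morphism is injective on points because $\Psi$ is an equivalence, $M_X(v)$ is projective, and $M_{\widehat X}(\hat v)$ is irreducible of the same dimension $2t$, so it is surjective; then every $E\in M_{\widehat X}(\hat v)$ equals $\widehat F=\Psi(F)[1]$ with $F\in M_X(v)$, and $\Phi(E)$ is, up to the shift built into $\Phi$, the single sheaf $F$ --- whence $E$ is $\Phi\WIT_0$.
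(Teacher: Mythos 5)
Your main argument is correct, and its numerical endgame coincides with the paper's: both proofs reduce to exhibiting a positive-rank, non-positive-degree quotient of $E$ restricted to some fiber and then invoking part (1) of Proposition \ref{ppp-fib} to obtain $r_0r+dd_0\leq t$, which is impossible for $t<r$. The difference lies in how that quotient is produced. The paper sidesteps what you call the ``main obstacle'' by quoting Bridgeland's canonical decomposition $0\to A\to E\to B\to 0$ with $A$ being $\Phi\WIT_0$ and $B$ being $\Phi\WIT_1$ (\cite[Lemmas 6.1--6.3]{bridgellip}): stability of $E$ forces $B$ to be torsion (positive rank would force positive slope, incompatible with $\Phi\WIT_1$), hence a fiber sheaf of non-positive maximal slope, and the surjection $E\epi B$ is exactly the quotient needed. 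This packages in one citation the base-change, Serre-duality-on-(possibly nodal)-fibers and shift bookkeeping that your direct analysis of $\mathcal{H}^1(\Phi(E))$ has to carry out by hand; your route is sound but does more work to reach the same sheaf $B$. Your second, dimension-counting route is genuinely different and also valid: an injective morphism from the projective, $2t$-dimensional $M_X(v)$ to the irreducible, $2t$-dimensional $M_{\widehat{X}}(\hat{v})$ has closed image of full dimension, hence is surjective, so every $E$ lies in the image of $\Psi[1]$ and is $\Phi\WIT_0$ for free; this in fact yields Theorem \ref{thm:iso-on-modspaces} directly. What the paper's longer proof buys is the explicit description of the $\Phi\WIT_1$ part of an arbitrary sheaf in $M_{\widehat{X}}(\hat{v})$, which is reused verbatim when $t\geq r$ (Proposition \ref{indet-of-phi} and Section \ref{sect-pic-iso}), where the dimension count no longer applies.
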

\begin{proof}
Recall that by \cite[Lemma 6.1]{bridgellip}, for all sheaves $E$ on $\widehat{X}$, there exists a unique short exact sequence
\begin{equation}\label{seq-wit0-1}
0 \longrightarrow A \longrightarrow E \longrightarrow B \longrightarrow 0,
\end{equation}
where $A$ is $\Phi\WIT_0$ and $B$ is $\Phi\WIT_1$. 

Suppose that the rank of $B$ is positive. Then by the stability of $E$, we get that the slope of $B$ has to
be positive as well, but in this case $B$ would not be $\Phi\WIT_1$, by \cite[Lemma 6.2]{bridgellip}.
Then $B$ is a torsion $\Phi\WIT_1$ sheaf,
and then it is a fiber sheaf by \cite[Lemma 6.3]{bridgellip}. Suppose $B$ is supported in a single fiber $\widehat{X}_0$.
Then restricting the Fourier-Mukai transform to $\widehat{X}_0$, we have that $\mu_{max} (B) \leq 0$,
since $B$ is $\Phi\WIT_1$. Let $r_0$ and $-d_0$ be the rank and the degree respectively of $B$ seen as a sheaf
on $\widehat{X}_0$. Recall that $E$ has rank $d$ and fiber degree $r$ and apply point (1) of Proposition \ref{ppp-fib}
to get $r_0 r + d d_0 \leq t$. But since $r_0>0$ and $t<r$, this would be a contradiction.
\end{proof}

Putting together Lemmas \ref{lem-wit-is-it} and \ref{phi-wit-little-t},
we obtain that the image via $\Phi$ of a stable torsion free sheaf of Mukai vector $\hat{v}$ is a stable torsion
free sheaf (indeed, a vector bundle) of Mukai vector $v$.

Since $\Phi$ and $\Psi$ are each other quasi-inverse, up to the shift $[1]$ in $\Db(\widehat{X})$, we get the required
isomorphism.

\begin{remark}
Remark that, if we stick to the case $r=2$ described by Friedman \cite{friedellip}, Theorem \ref{thm:iso-on-modspaces}
gives an isomorphism between the moduli space and the Hilbert scheme only for $t=1$. The case $t=r=2$, in which Friedman
showed the existence of an isomorphism, will be described in the next Section.
\end{remark}

\subsection{The Euclid-Fourier-Mukai algorithm}
In the previous subsection we have shown how, under a condition on the dimension of the moduli space, the Fourier-Mukai
transform $\Psi$ induces an isomorphism between $M_X(v)$ and $M_{\widehat{X}}(\hat{v})$. The very special choice of $\widehat{X}$
could look more restrictive than the construction performed by Bridgeland in \cite{bridgellip}. We
show that it is indeed enough to consider, given an elliptic surface $X$, the Jacobian $\widehat{X}$ and perform a finite
number of steps to get a more general description.

Given coprime integers $r$ and $d$, with $0<d<r$,
we can consider the Euclidean algorithm applied to the pair $(r,-d)$: its
elementary step is given by sending $(r,-d)$ to $(d,-s)$, with $-s \equiv r$ modulo $d$ and $0<s<d$.
Since $r$ and $d$ are coprime, the algorithm will
end with the pair $(1,0)$.

The Mukai vector $v = (r, \Lambda,c)$ on $X$ is uniquely identified with the integers $r$, $-d$ and $t$,
where $-d$ is the fiber degree of $\Lambda$ and $2t$ the expected dimension of the moduli space.
Recall that $r$ and $d$ are coprime, $r \geq 2$ and $0<d<r$.
Consider the Jacobian $\widehat{X}$ of $X$ and the Fourier-Mukai equivalence $\Psi : \Db(X) \to \Db(\widehat{X})$. The functor
$\Psi[1]$ sends sheaves with a Mukai vector
$v$ on $X$ to sheaves with a Mukai vector $\hat{v}$ on $\widehat{X}$. The integer $t$ is invariant under this operation,
while the pair $(r,-d)$ is sent to the pair $(d,r)$. We can fix a line bundle $L$ on $\widehat{X}$ such that, for all
$E$ in $M_{\widehat{X}}(\hat{v})$, the sheaf $E \otimes L$ has fiber degree $-s$,  where $-s \equiv r$ modulo $d$ and $0<s<d$.
In particular,
$s$ and $d$ are coprime. This last operation gives rise to a natural isomorphism between $M_{\widehat{X}}(\hat{v})$ and
$M_{\widehat{X}}(\hat{w})$, where $\hat{w}$ is the Mukai vector of $E \otimes L$.

We can now define the Euclid-Fourier-Mukai algorithm as follows: given a Mukai vector $v=(r,\Lambda,c)$ as before,
consider the integers $t$ (half dimension of the moduli space), $r$ (rank) and $-d$ (fiber degree), with
$r$ and $-d$ coprime and $0<d<r$. These three integers uniquely identify the vector $v$. Consider the
Fourier-Mukai transform $\Psi$: it takes (up to a shift and a tensor by a line bundle) the pair $(r,-d)$ to
the pair $(d,-s)$, leaving $t$ invariant. If $d=1$ and $s=0$, we stop. If $0<s<d$, we can restart. The
Euclid-Fourier-Mukai algorithm is then obtained by applying this procedure to the moduli space $M_{\widehat{X}}(\hat{v})$.
Remark that the algorithm always ends with the Mukai vector $(1,0,t)$.

As a consequence, starting from the moduli space $M_X(v)$,
the Euclid-Fourier-Mukai algorithm ends
up with the Hilbert scheme $\Hilb^t(X)$
in a finite number of steps.
Indeed remember that, since $\pi:X \to\PP^1$ admits a section, $X$ is
isomorphic to its Jacobian $\widehat X$.

Since the rank is strictly decreasing and $t$ is constant, the condition given by
Theorem \ref{thm:iso-on-modspaces} gets more strict at every step. Consider the Mukai vector $v$, with $t$ and $(r,-d)$ as before
and the Euclid-Fourier-Mukai algorithm. The last step has the form $(l,-1) \to (1,0)$ for $l \geq 2$ integer.

\begin{corollary}\label{cor:iso-M-H}
If $t < l$, the moduli space $M_X(v)$ is isomorphic to $\Hilb^t(X)$.
\end{corollary}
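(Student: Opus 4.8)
The plan is to iterate Theorem~\ref{thm:iso-on-modspaces} along the chain of Mukai vectors produced by the Euclid--Fourier--Mukai algorithm, and to observe that the single hypothesis $t<l$ already forces the numerical condition of that theorem at \emph{every} step.

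First I would run the algorithm starting from $v=(r,\Lambda,c)$, which is encoded by the triple $(t,r,-d)$ with $0<d<r$ and $\gcd(r,d)=1$. This produces a finite sequence of Mukai vectors $v=v_0,v_1,\dots,v_k$ on $X$, each step consisting of applying $\Psi[1]$, twisting by a line bundle to renormalise the fiber degree, and using the section-induced isomorphism $X\cong\widehat X$ to land back on $X$. Writing $r_i$ for the rank of $v_i$, the algorithm guarantees: the half-dimension $t$ is common to all the $v_i$; $r_0=r$ and $r_k=1$; the first coordinates form a strictly decreasing sequence $r_0>r_1>\dots>r_k$ (this is the Euclidean step $(r,-d)\mapsto(d,-s)$ with $0<s<d$); and the last nontrivial transition has the shape $(l,-1)\to(1,0)$, i.e.\ $r_{k-1}=l\ge2$.

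Next I would check that for each $0\le i\le k-1$ the transition $M_X(v_i)\dashrightarrow M_X(v_{i+1})$ is an isomorphism as soon as $t<r_i$. The line-bundle twist and the identification $X\cong\widehat X$ are isomorphisms of moduli spaces for trivial reasons, so the only content is the passage $M_X(v_i)\xrightarrow{\Psi[1]}M_{\widehat X}(\hat v_i)$, which is an isomorphism by Theorem~\ref{thm:iso-on-modspaces} precisely because $t<r_i$ (the hypothesis of that theorem is a condition on the rank of the source $v_i$). Since $r_0>r_1>\dots>r_{k-1}=l$, the inequality $t<l$ yields $t<r_i$ for all $i=0,\dots,k-1$, so every step is an isomorphism and composing them gives
\[ M_X(v)=M_X(v_0)\;\cong\;M_X(v_k)=M_X(1,0,t)=\Hilb^t(X). \]

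The hard part --- really the only point with genuine content --- will be identifying $l$ as the \emph{smallest} rank occurring among the sources $v_0,\dots,v_{k-1}$, so that the single bound $t<l$ dominates all the intermediate conditions $t<r_i$; this follows from the strict monotonicity of the first coordinate in the Euclidean algorithm together with the fact that the penultimate first coordinate $l$ is at least $2$ (it cannot equal $1$, or the algorithm would already have stopped). The remaining work is the bookkeeping of composing finitely many isomorphisms, using throughout that the moduli spaces involved stay smooth, projective, irreducible of dimension $2t$ (and in particular non-empty) along the chain.
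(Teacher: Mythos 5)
Your proposal is correct and follows essentially the same route as the paper: the paper's (largely implicit) argument is precisely that the source ranks strictly decrease along the Euclid--Fourier--Mukai algorithm, so the penultimate rank $l$ is the minimum of all source ranks, and $t<l$ therefore triggers Theorem~\ref{thm:iso-on-modspaces} at every step; composing the resulting isomorphisms (together with the harmless line-bundle twists and the identification $X\cong\widehat X$ coming from the section) gives $M_X(v)\cong\Hilb^t(X)$. No substantive difference to report.
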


Given the pair $(r,-d)$, there exist integers $a$ and $b$ such that $br+ad=1$, and we can fix $a$ uniquely such that
$0<a<r$. We have the following result by Bridgeland.

\begin{theorem}\label{iso-bridge}
If $t < r/a$, the moduli space $M_X(v)$ is isomorphic to $\Hilb^t(X)$.
\end{theorem}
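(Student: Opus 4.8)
The plan is to obtain Theorem~\ref{iso-bridge} as a formal consequence of Corollary~\ref{cor:iso-M-H}. That corollary asserts that $M_X(v)$ is isomorphic to $\Hilb^t(X)$ whenever $t<l$, where $l$ is the penultimate rank occurring in the Euclid--Fourier--Mukai algorithm run on $(r,-d)$. Indeed, each step of that algorithm is either a normalising twist by a line bundle (always an isomorphism of moduli spaces) or an application of $\Psi[1]$, which induces an isomorphism of moduli by Theorem~\ref{thm:iso-on-modspaces} precisely when $t$ is strictly smaller than the rank at that stage; since the ranks form a strictly decreasing sequence $r=r_0>r_1>\cdots>r_k=l>1$, all these conditions hold as soon as $t<l$. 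So it suffices to prove
\[
l=\left\lceil\frac{r}{a}\right\rceil ,
\]
for then $t<r/a$ forces the integer $t$ to satisfy $t\le\lceil r/a\rceil-1=l-1<l$, and $M_X(v)\iso\Hilb^t(X)$ by Corollary~\ref{cor:iso-M-H}. In particular this also shows that Bridgeland's bound and ours cut out the same set of admissible $t$.

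First I would identify the algorithm with a continued fraction expansion. By Lemma~\ref{lemma-recall-bridg}, $\Psi[1]$ sends a Mukai vector of rank $r_i$ and fiber degree $-d_i$ to one of rank $d_i$ and fiber degree $r_i$, and the normalising twist then subtracts a suitable multiple $m_i$ of the rank, producing fiber degree $-d_{i+1}$ with $r_i=m_id_i-d_{i+1}$, $m_i\ge 2$ and $0\le d_{i+1}<d_i$. Hence $r_{i+1}=d_i$, and the procedure is exactly the negative (Hirzebruch--Jung) continued fraction algorithm applied to $r/d$: writing $r/d=[[m_0,m_1,\ldots,m_k]]$ (that is, $r/d=m_0-1/(m_1-1/(\cdots-1/m_k))$, all $m_i\ge 2$), one reads off the rank sequence, and since the algorithm ends with the step $(m_k,-1)\to(1,0)$ the penultimate rank is $l=r_k=m_k$, the last partial quotient.

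Next I would bring in the convergents $p_i/q_i$ of this expansion, with the standard conventions $p_{-1}=1$, $q_{-1}=0$, $p_0=m_0$, $q_0=1$ and recursions $p_i=m_ip_{i-1}-p_{i-2}$, $q_i=m_iq_{i-1}-q_{i-2}$. They satisfy $p_iq_{i-1}-p_{i-1}q_i=-1$ for all $i\ge 0$, are strictly increasing, and reach $p_k/q_k=r/d$ in lowest terms, so $p_k=r$ and $q_k=d$. The identity at $i=k$ reads $rq_{k-1}-p_{k-1}d=-1$, hence $p_{k-1}d\equiv 1\pmod r$ with $0<p_{k-1}<r$, which forces $p_{k-1}=a$ by the uniqueness of $a$ in $(0,r)$. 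Then $r=p_k=m_kp_{k-1}-p_{k-2}=m_ka-p_{k-2}$, and since $0\le p_{k-2}<p_{k-1}=a$ we get $(l-1)a<r\le la$, i.e. $l=\lceil r/a\rceil$ (with equality $l=r/a$ only in the degenerate case $d=1$, where $a=1$ and $l=r$). Alternatively, one may invoke the classical reversal symmetry of Hirzebruch--Jung expansions: those of $r/d$ and of $r/a$ (with $ad\equiv 1\pmod r$, $0<a<r$) are mirror images of each other, so the first partial quotient $\lceil r/a\rceil$ of $r/a$ equals the last partial quotient $m_k=l$ of $r/d$.

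The main obstacle I anticipate is purely organisational: verifying carefully that the composite of $\Psi[1]$ with the normalising twist realises precisely one Hirzebruch--Jung step (the correct signs in the fiber-degree transformation and the correct stopping condition) and that the partial quotients are indexed in step with the ranks $r_i$. This is routine but error-prone, and it is exactly the point at which an off-by-one in the length of the expansion, or a sign slip in the twist, would invalidate the crucial identification $p_{k-1}=a$. Everything downstream of it—the equality $l=\lceil r/a\rceil$ and the final appeal to Corollary~\ref{cor:iso-M-H}—is then immediate.
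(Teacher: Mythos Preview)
Your argument is correct, but it is not how the paper proves Theorem~\ref{iso-bridge}: the paper simply cites Bridgeland's original result \cite[Lemma~7.4]{bridgellip} and does not re-derive it. What you have written is instead an independent proof via Corollary~\ref{cor:iso-M-H}, together with a proof of the arithmetic identity $l=\lceil r/a\rceil$; that identity is exactly the content of Lemma~\ref{us=bridg}, which the paper postpones to the Appendix (Lemma~\ref{elem-nt1}). So you have effectively merged the proofs of Theorem~\ref{iso-bridge} and Lemma~\ref{us=bridg} into one self-contained argument, whereas the paper keeps Theorem~\ref{iso-bridge} as a quotation of Bridgeland and then separately shows the two bounds coincide.

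For the arithmetic core, your method also differs from the paper's. You work with the convergents $p_i/q_i$ of the Hirzebruch--Jung expansion, use the constant determinant relation $p_iq_{i-1}-p_{i-1}q_i=-1$ to identify $p_{k-1}$ with the inverse $a$ of $d$ modulo $r$, and read off $l=m_k=\lceil r/a\rceil$ from the recursion $r=m_ka-p_{k-2}$. The paper's Appendix instead writes the expansion as a product $T^{a_0}S\cdots T^{a_n}S$ in $\mathrm{SL}_2(\ZZ)$ and uses the conjugation identity $R\cdot S^t\cdot (T^{a_i})^t\cdot R=T^{a_i}S$ to show that reversing the partial quotients swaps $d$ for its inverse $b$ modulo $r$; the equality $a_n=\lceil r/b\rceil$ then follows. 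Your convergent argument is slightly more direct and avoids the matrix transpose trick; the paper's version makes the reversal symmetry of the expansion explicit. Both are standard and equally rigorous.
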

\begin{proof}
\cite[Lemma 7.4]{bridgellip}
\end{proof}

Corollary \ref{cor:iso-M-H} and Theorem \ref{iso-bridge} give indeed the same result.

\begin{lemma}\label{us=bridg}
With the previous notations, $l = \lceil r/a \rceil$.
\end{lemma}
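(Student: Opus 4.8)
The plan is to reduce the Euclidean part of the algorithm to a linear recursion and to extract $l$ by comparison with an auxiliary ``denominator'' sequence. First I would record what one elementary step does: applied to a pair $(x,-y)$ with $0<y<x$ and $\gcd(x,y)=1$, it returns $(y,-z)$ with $z\equiv -x\pmod y$, $0\le z<y$, i.e. $x+z=qy$ with $q=\lceil x/y\rceil$, and $q\ge 2$ because $x>y$. Running the algorithm on $(r,-d)$ thus produces positive integers $a_0=r$, $a_1=d$, $a_2,\dots,a_{m+1}=1$ and $a_{m+2}=0$ with
\[
a_{k-1}+a_{k+1}=q_k a_k,\qquad q_k=\lceil a_{k-1}/a_k\rceil\ge 2\qquad(1\le k\le m+1),
\]
the last pair before $(1,0)$ being $(a_m,-a_{m+1})=(l,-1)$, so that $a_m=l$ and $q_{m+1}=l$. (This is just the Hirzebruch--Jung continued fraction of $r/d$, all partial quotients $\ge 2$; the lemma asserts its last partial quotient is $\lceil r/a\rceil$.) The degenerate one-step case $d=1$, where $m=0$, $a=1$, $l=r=\lceil r/1\rceil$, I would treat separately at the outset.

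Next I would introduce the companion sequence $(\delta_k)_{0\le k\le m+2}$ solving the same recursion $\delta_{k+1}=q_k\delta_k-\delta_{k-1}$ with $\delta_0=0$, $\delta_1=1$; since every $q_k\ge 2$, an immediate induction gives $0=\delta_0<\delta_1<\cdots<\delta_{m+2}$. The crux is then a Wronskian-type invariant: because $(a_k)$ and $(\delta_k)$ obey the same second-order recursion, $W_k:=\delta_ka_{k-1}-\delta_{k-1}a_k$ is independent of $k$, hence equals $W_1=\delta_1a_0-\delta_0a_1=r$. Specialising to $k=m+2$ (using $a_{m+1}=1$, $a_{m+2}=0$) gives $\delta_{m+2}=r$; specialising to $k=m+1$ (using $a_m=l$, $a_{m+1}=1$) gives $l\,\delta_{m+1}-\delta_m=r$.

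It then remains to identify $\delta_{m+1}$ with $a$. Let $(\epsilon_k)$ be the solution of the recursion with $\epsilon_0=1$, $\epsilon_1=0$; since a solution is determined by its first two terms, $a_k=r\epsilon_k+d\,\delta_k$ for all $k$. At $k=m+1$ this reads $r\epsilon_{m+1}+d\,\delta_{m+1}=1$, so $d\,\delta_{m+1}\equiv 1\pmod r$; as $0<\delta_{m+1}<\delta_{m+2}=r$ and $a$ is by definition the unique residue in $(0,r)$ with $ad\equiv 1\pmod r$, we conclude $\delta_{m+1}=a$. Feeding this into $l\,\delta_{m+1}-\delta_m=r$ gives $\delta_m=la-r$, and $0\le\delta_m<\delta_{m+1}=a$ forces $r\le la<r+a$, i.e. $l=\lceil r/a\rceil$.

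The argument is essentially formal; the one place requiring genuine care is the first step, namely checking that the algorithm is honestly governed by the displayed recursion and in particular that each partial quotient $q_k$ is at least $2$. That inequality is exactly what makes $(\delta_k)$ strictly increasing, and this strict monotonicity is what traps $\delta_{m+1}$ strictly between $0$ and $r$, which in turn is what upgrades the congruence $d\,\delta_{m+1}\equiv 1\pmod r$ to the needed equality $\delta_{m+1}=a$.
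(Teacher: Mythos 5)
Your proof is correct. It takes a genuinely different route from the paper's. The paper encodes the negative (Hirzebruch--Jung) continued fraction $r/d=\lceil a_0,\dots,a_n\rceil$ as a product of ${\rm SL}_2(\ZZ)$-matrices $A=T^{a_0}S\cdots T^{a_n}S$ and uses the conjugation identity $R\cdot A^t\cdot R=T^{a_n}S\cdots T^{a_0}S$ with $R={\rm diag}(-1,1)$ to prove the stronger, palindromic statement that reversing the continued fraction of $r/d$ produces the continued fraction of $r/a$; the lemma then falls out because the leading partial quotient of $r/a$ is $\lceil r/a\rceil$. You bypass the reversal entirely: your $(\delta_k)$ and $(\epsilon_k)$ are in effect the second-column entries of the partial matrix products, and your Wronskian invariance $W_{k+1}=W_k$ is the statement that these products have determinant one, but by staying at the level of the three-term recursion you extract only the last partial quotient, which is all the lemma needs. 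The paper's version buys the full reversal identity (part (3) of its Lemma A.1), which is of independent interest; yours is shorter and more elementary. Both arguments ultimately rest on the same two points, which you correctly isolate: every partial quotient is at least $2$, so $(\delta_k)$ is strictly increasing and $\delta_{m+1}$ is trapped in $(0,r)$; and the relation $r\epsilon_{m+1}+d\,\delta_{m+1}=1$ upgrades the congruence $d\,\delta_{m+1}\equiv 1\pmod r$ to the identification $\delta_{m+1}=a$, after which $l\,a-r=\delta_m\in[0,a)$ forces $l=\lceil r/a\rceil$.
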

We will give the proof of this rather technical Lemma in the Appendix, see Lemma \ref{elem-nt1}.

\section{Birational transformations between the moduli spaces}

In the previous Section, we have shown that if $t<r$, we can
establish an isomorphism between $M_X(v)$ and $M_{\widehat{X}}(\hat{v})$. In this Section, we deal
with $t \geq r$ and we describe the birational correspondence between the moduli spaces, obtaining
a Mukai flop for $r \leq t < r+d$, which is an isomorphism if and only if $r=t=2$. It is also clear that,
for any $t \geq r > 2$, the map cannot be an isomorphism.
For $r=2$, we argue how in the case $t=4$, the conjecture formulated by Friedman should be false.
Let us state the main Theorem of this Section.
\begin{theorem}\label{mukai-flop}
Let $r \leq t < r+d$. Then the Fourier-Mukai equivalence $\Psi$ induces a Mukai flop between
$M_X(v)$ and $M_{\widehat{X}}(\hat{v})$, unless $t=r=2$, in which case it induces an isomorphism.
\end{theorem}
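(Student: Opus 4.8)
The plan is to realize $\ee$ as the Mukai flop of $M_X(v)$ along a precisely identified projective bundle, and then to see that this flop degenerates to an isomorphism exactly when $t=r=2$.

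First I would locate where $\ee$ fails to be an isomorphism. By Lemma \ref{r^2-condition} and Proposition \ref{E-hat-is-tf} (available since $t<r+d$), for a vector bundle $E$ of Mukai vector $v$ the sheaf $\widehat E=\Psi(E)[1]$ is torsion free and so lies in $M_{\widehat X}(\hat v)$; hence $\ee$ restricts to an isomorphism from the open locus $U\subset M_X(v)$ of vector bundles onto the open locus $\widehat U\subset M_{\widehat X}(\hat v)$ of $\Phi\WIT_0$ sheaves. Because $t<r+d<2r$, a non--locally free $E\in M_X(v)$ must satisfy $E^{\vee\vee}/E=k(x)$ for a single reduced point $x$: a longer colength $\ell$ would force the moduli space of $E^{\vee\vee}$, with Mukai vector $(r,\Lambda,c+\ell)$, to have negative expected dimension $2(t-r\ell)$, exactly as in the proof of Lemma \ref{r-condition}. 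Writing $v_1=(r,\Lambda,c+1)$, with $\dim M_X(v_1)=2(t-r)\geq0$, and using that $\gcd(r,d)=1$ gives a universal bundle $\kf$ on $M_X(v_1)\times X$, the assignment $E\mapsto(E^{\vee\vee},x,\ker(E^{\vee\vee}_x\to k(x)))$ identifies
\[ Z:=M_X(v)\setminus U\ \cong\ \PP(\kf)\ \longrightarrow\ B:=M_X(v_1)\times X, \]
a $\PP^{r-1}$--bundle of codimension $r-1$ in $M_X(v)$.

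Symmetrically, running the $\Phi$--analysis of Section 3 in the range $t<r+d$ (the $\Phi\WIT_1$--part of a stable $G$ of Mukai vector $\hat v$ is then forced by Proposition \ref{ppp-fib}(1) to be a single degree zero fibre line bundle ${\cal P}_y$, i.e.\ $0\to\widehat F\to G\to i_*{\cal P}_y\to0$ with $\widehat F$ stable of Mukai vector $\hat v_1$), the complement $\widehat Z:=M_{\widehat X}(\hat v)\setminus\widehat U$ is identified with a projective bundle $\PP(\ke)$ over $M_{\widehat X}(\hat v_1)\times\widehat X$, where $\ke$ has fibre $\Ext^1(i_*{\cal P}_x,\widehat F)$. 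Now $v_1$ has dimension parameter $t-r<r$ (as $t<2r$), so Theorem \ref{thm:iso-on-modspaces} gives $M_X(v_1)\cong M_{\widehat X}(\hat v_1)$ through $\Psi$, and the relative Jacobian of $\widehat X$ is $X$; hence $\widehat Z$ is a projective bundle over the \emph{same} base $B$. Finally, since $i_*{\cal P}_x=\Psi(k(x))$ and $\widehat F=\Psi(F)[1]$, the equivalence $\Psi$ together with Serre duality on the K3 surface $X$ gives $\Ext^1(i_*{\cal P}_x,\widehat F)=\Ext^2_X(k(x),F)\cong\Hom_X(F,k(x))^{\vee}\cong F_x$, so $\widehat Z\cong\PP(\kf^{\vee})$: the \emph{dual} of the projective bundle $Z=\PP(\kf)$.

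Next I would compute the normal bundle of $Z$. For $E=\ker(F\to F_x\to k(x))\in Z$ one has $E\cong(H\otimes\ko_X)\oplus\mathfrak m_x$ locally near $x$, with $H\subset F_x$ the hyperplane; feeding $0\to E\to F\to k(x)\to0$ into $\RHom(-,E)$ and using $\Hom(F,E)=\Hom(E,F)=\CC$ and $\Ext^i(k(x),E)\cong\Ext^{2-i}_X(E,k(x))^{\vee}$, one finds $\Ext^2(k(x),E)\cong(H\otimes\wedge^2T_xX)\oplus T_xX$ and that the tangent space $T_{Z,E}$ is the preimage of the $T_xX$--summand. Since $K_X$ is trivial, $\wedge^2T_xX$ is canonically $\CC$, and therefore
\[ N_{Z/M_X(v),E}\ \cong\ H\ \cong\ \Omega_{Z/B,E}, \]
so $N_{Z/M_X(v)}\cong\Omega_{Z/B}$ (and symmetrically $N_{\widehat Z/M_{\widehat X}(\hat v)}\cong\Omega_{\widehat Z/B}$); the triviality of $K_X$ is precisely what is used here. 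Blowing up $Z$ then has exceptional divisor $D=\PP(\Omega_{Z/B}^{\vee})$, which over $B$ is the incidence variety $\{\,\ell\subset H\subset\kf_b\,\}$ and so carries a second projective bundle structure $D\to\PP(\kf^{\vee})$; contracting it performs the Mukai flop of $M_X(v)$ along $Z$, producing a holomorphic symplectic $M'$ containing $\PP(\kf^{\vee})$. As $M'\setminus\PP(\kf^{\vee})=M_X(v)\setminus Z=U\cong\widehat U=M_{\widehat X}(\hat v)\setminus\widehat Z$ via $\ee$, and the formal neighbourhoods of $\widehat Z$ in $M'$ and in $M_{\widehat X}(\hat v)$ agree --- each being determined by the symplectic form and by $N_{\widehat Z/\,\cdot}\cong\Omega_{\widehat Z/B}$ --- one obtains $M'\cong M_{\widehat X}(\hat v)$ compatibly with $\ee$, so $\ee$ is exactly this Mukai flop. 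For $r\geq3$ the two projective bundle structures on $D$ are distinct, so the flop genuinely modifies the variety; for $r=2$, $\kf^{\vee}\cong\kf\otimes\det\kf^{-1}$ gives $\PP(\kf)\cong\PP(\kf^{\vee})$ and moreover $Z$ is a divisor, so the blow--up is trivial and $\ee$ is an isomorphism --- and within the range $r\leq t<r+d$ with $0<d<r$ the equality $r=2$ forces $d=1$ and $t=2$. This yields the theorem.

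The two places I expect real work are: the normal bundle identification above (the local $\Ext$ computation, where $K_X\cong\ko_X$ is essential), and the last structural step, namely upgrading ``$\ee$ agrees with the Mukai flop on a dense open subset'' to ``$\ee$ is the Mukai flop'' --- i.e.\ matching the formal neighbourhood of $\widehat Z$ produced abstractly by the flop with the one it actually carries inside $M_{\widehat X}(\hat v)$.
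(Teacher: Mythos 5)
Your proposal follows essentially the same route as the paper: Lemma \ref{E-hat-not-tf} and Propositions \ref{indet-of-psi} and \ref{indet-of-phi} identify the indeterminacy loci of $\varepsilon$ and $\rho$ as the colength-one non-locally-free locus and the $\Phi\WIT_1$-quotient locus, realize them as $\PP^{r-1}$-bundles of codimension $r-1$ over $X\times M_{\widehat X}(\hat{u})$, and use Serre duality to see they are dual bundles --- exactly your $Z=\PP(\kf)$ and $\widehat Z=\PP(\kf^{\vee})$. The paper stops there and declares the map a Mukai flop (an isomorphism precisely when $t=r=2$, the codimension-one case), whereas you additionally verify $N_{Z/M_X(v)}\cong\Omega_{Z/B}$ and sketch the blow-up/contraction realization; this is extra detail the paper omits, and the point you honestly flag as needing real work --- matching the formal neighbourhood of $\widehat Z$ produced by the abstract flop with the one inside $M_{\widehat X}(\hat{v})$ --- is equally left implicit in the published argument.
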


\subsection{New compactifications for the moduli space of vector bundles}\label{new-moduli}
Before giving the proof of Theorem \ref{mukai-flop}, let us describe a very interesting implication.
As shown in the previous
Section, the generic stable sheaf $E$ of $M_X(v)$ is sent by $\Psi$ to a stable sheaf $\widehat{E}$. Indeed,
$\widehat{E}$ always satisfies a suitable orthogonality condition. The stability follows from torsion
freeness of $\widehat{E}$, which is assured for $E$ generic by Proposition \ref{E-hat-is-tf}. The indeterminacy
locus of the birational map is then given by those $E$ in $M_X(v)$ for which $\widehat{E}$ is not torsion free.
On the other hand, for the generic
stable sheaf $E$ in $M_{\widehat{X}}(\hat{v})$, $\Phi(E)$ is
a stable sheaf in $M_X(v)$, which is indeed a stable vector bundle. The indeterminacy locus of the
birational map induced by $\Phi$ is then given by those $E$ for which $\Phi(E)$ is not a vector bundle. We
are showing that in this case $\Phi(E)$ is a complex in $\Db(X)$ with two non-vanishing cohomologies.

Recall we are considering $M_X(v)$ as moduli space of $P$-stable objects in $\Db(X)$ and that $\Psi$
induces an isomorphism between $M_X(v)$ and the moduli space of $\Psi(P)$-stable objects in $\Db(\widehat{X})$,
which, as a set, is nothing but $\Psi (M_X(v))$. We are then giving an irreducible
projective smooth fine moduli space $N_{\widehat{X}}$ for
$\Psi(P)$-stable sheaves over $\widehat{X}$. The generic element of $N_{\widehat{X}}$ is a stable vector bundle of Mukai vector
$\hat{v}$. For $t\geq r$, $N_{\widehat{X}}$ contains also sheaves with torsion and is not isomorphic to $M_{\widehat{X}}(\hat{v})$.
This provides a new compactification of the moduli space of vector bundles via sheaves with torsion.

On the other side, we can consider $M_{\widehat{X}}(\hat{v})$ as moduli space of $P$-stable objects in $\Db(\widehat{X})$ and
$\Phi$ induces an isomorphism between $M_{\widehat{X}}(\hat{v})$ and the moduli space of $\Phi(P)$-stable objects in
$\Db(X)$, which, as a set, is nothing but $\Phi(M_{\widehat{X}}(\hat{v}))$. We are then giving an irreducible projective smooth fine
moduli space $N_X$ for $\Phi(P)$-stable objects over $X$. The generic element of $N_X$ is a stable vector bundle
of Mukai vector $v$. For $t\geq r$, $N_X$ contains also true complexes and
is not isomorphic to $M_X(v)$. This provides a new compactification of the moduli space of vector bundles via complexes. 

\subsection{Proof of Theorem \ref{mukai-flop}}
Let us denote the birational map induced by $\Psi$ as 
$$\varepsilon: M_X(v) \dashrightarrow M_{\widehat{X}}(\hat{v}).$$
Let $r \leq t < r+d$. If $E$ is a stable sheaf in $M_X(v)$, Corollary \ref{image-of-stable-is-orth} ensures that
the sheaf $\widehat{E}$ is $(d,r)^{\perp}$. Then $\widehat{E}$ is not in $M_{\widehat{X}}(\hat{v})$ if and only if it is not
torsion free.
\begin{lemma}\label{E-hat-not-tf}
Let $E$ be a stable sheaf in $M_X(v)$ and $t<r+d$. Then $E$ is $\Psi\WIT_1$. The sheaf $\widehat{E}$ is not torsion free
if and only if $E$ is not a
vector bundle. Moreover this is the case if and only if there is a point
$x$ in $X$ which appears in the short exact sequence
$$0 \longrightarrow E \longrightarrow E^{\vee \vee} \longrightarrow k(x) \longrightarrow 0.$$
\end{lemma}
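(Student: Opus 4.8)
The plan is to combine the previously established facts into a chain of equivalences. First, since $E\in M_X(v)$ is stable of slope $-d/r<0$, Corollary \ref{image-of-stable-is-orth} (equivalently Lemma \ref{lemma-recall-bridg}) already gives that $E$ is $\Psi\WIT_1$, so $\widehat E=\Psi(E)[1]$ is a genuine sheaf; nothing more is needed for that assertion. The real content is the equivalence ``$\widehat E$ not torsion free $\iff$ $E$ not locally free'', together with the identification of the failure locus with the sheaves fitting into the displayed sequence with a single point $x$.

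For the direction ``$E$ not locally free $\Rightarrow$ $\widehat E$ not torsion free'': I would reuse verbatim the first paragraph of the proof of Proposition \ref{E-hat-is-tf}. Namely, from $0\to E\to E^{\vee\vee}\to\tau\to 0$ with $\tau$ zero-dimensional, apply $\Psi$; since $\tau$ is $\Psi\WIT_0$ one gets an injection $\Psi(\tau)\hookrightarrow\Psi(E)[1]=\widehat E$, and $\Psi(\tau)$ is a non-zero torsion sheaf (supported on the fibers meeting $\supp\tau$), so $\widehat E$ has torsion.

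For the converse ``$E$ locally free $\Rightarrow$ $\widehat E$ torsion free'': this is exactly where the hypothesis $t<r+d$ enters and where the genuine work lies. By Lemma \ref{r^2-condition}, $t<r+d$ forces the restriction $E_F$ to every fiber to have no sub-bundle of positive slope; then Proposition \ref{E-hat-is-tf} says precisely that $\widehat E$ is torsion free. So the heart of the matter is already packaged: I would simply invoke Lemma \ref{r^2-condition} followed by Proposition \ref{E-hat-is-tf}. The only subtlety is to make sure the phrase ``does not admit a sub-bundle of positive slope'' in Proposition \ref{E-hat-is-tf} matches what is proved about $T_0$ there — and it does, since the argument there rules out quotients $T$ with $\mu_{\max}(T_0)\ge 0$, which by duality on the elliptic fibers is the same as the nonexistence of positive-slope sub-bundles of $E_F$. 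The main obstacle, such as it is, is bookkeeping: being careful that the numerical inequality $rd_0+r_0d\le t$ from Proposition \ref{ppp-fib}(1), combined with $r_0,d_0\ge 1$, is genuinely incompatible with $t<r+d$ — which is immediate.

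Finally, for the structural statement: if $E$ is stable and not locally free, then $E^{\vee\vee}$ is again stable (restriction to the general fiber is unchanged, so Proposition \ref{ortho} applies) with Mukai vector $v'=(r,\Lambda,c+\ell)$ where $\ell=\mathrm{length}(E^{\vee\vee}/E)$, and $\langle v',v'\rangle=\langle v,v\rangle+2r\ell$, i.e.\ the expected dimension of $M_X(v')$ is $2t-2r\ell$. Under $t<r+d\le 2r$ (as $d<r$, hence $t<2r$) we get $2t-2r\ell<0$ once $\ell\ge 2$, so $M_X(v')$ is empty; thus $\ell=1$ and $E^{\vee\vee}/E\cong k(x)$ for a single point $x$, giving the displayed sequence. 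Conversely any $E$ fitting in such a sequence is visibly not locally free. Combining the three equivalences yields the Lemma. $\Box$
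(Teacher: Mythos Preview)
Your proof is correct and follows essentially the same route as the paper: the $\Psi\WIT_1$ claim and the equivalence ``$\widehat E$ torsion free $\iff$ $E$ locally free'' are obtained exactly by combining Proposition~\ref{E-hat-is-tf} with Lemma~\ref{r^2-condition} (the paper just cites these results, while you unpack them), and the length-one statement is handled by the same dimension count as in Lemma~\ref{r-condition}, using $t<r+d<2r$ to force $M_X(v')$ empty when $\ell\ge 2$. One cosmetic slip: you write $t<r+d\le 2r$, but since $0<d<r$ the inequality is strict, $r+d<2r$; your conclusion $t<2r$ is unaffected.
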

\begin{proof}
The first equivalence follows from Proposition \ref{E-hat-is-tf}, Lemma \ref{r^2-condition}
and proof of Corollary \ref{Psi-of-stable-is-tf}. For the second one, suppose that there are exactly two points
$x_1$ and $x_2$ where $E$ fails to be locally free. Then we have the exact sequence
$$0 \longrightarrow E \longrightarrow E^{\vee \vee} \longrightarrow k(x_1) \oplus k(x_2) \longrightarrow 0,$$
and we get the proof by arguing as in the proof of Lemma \ref{r-condition}, since $t < r+d < 2r$.
\end{proof}
Let $E$ be a non locally free sheaf in $M_X(v)$ and denote by $u$ the Mukai vector of $E^{\vee \vee}$.
We have $u=(r,\Lambda,c+1)$, independently of the point $x$.
Remark that $E^{\vee \vee}$ is a stable vector bundle
and the moduli space $M_X(u)$ has dimension $2t-2r$ and is then isomorphic to $M_{\widehat{X}}(\hat{u})$.
\begin{proposition}\label{indet-of-psi}
The indeterminacy locus of $\varepsilon$ is a $\PP^{r-1}$-bundle over
$X
\times M_X(u)$ and naturally isomorphic via $\Psi$ to
the a $\PP^{r-1}$-bundle over $X \times M_{\widehat X} (\hat{u})$ associated
to the universal extension of the form $\PP\Ext^1(F,\kp_x)$, for $F$ in $M_{\widehat{X}}(\hat{u})$ and $\kp_x$ in $\widehat{\widehat{X}}
\cong X$. In particular, the indeterminacy locus has codimension $r-1$.
\end{proposition}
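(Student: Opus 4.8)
The plan is to describe the indeterminacy locus of $\varepsilon$ explicitly on both sides of the correspondence and match them via $\Psi$. By Lemma \ref{E-hat-not-tf}, the indeterminacy locus of $\varepsilon$ consists exactly of those stable $E \in M_X(v)$ which are not locally free, and each such $E$ sits in a short exact sequence $0 \to E \to E^{\vee\vee} \to k(x) \to 0$ with $E^{\vee\vee}$ a stable vector bundle of Mukai vector $u = (r,\Lambda,c+1)$ and $x \in X$ the unique point where $E$ fails to be locally free. First I would note that, conversely, any stable vector bundle $G$ with Mukai vector $u$, any point $x$, and any surjection $G \to k(x)$ produces such an $E$ as the kernel, and that $E$ is automatically stable (same generic-fiber restriction as $G$, hence stable by Proposition \ref{ortho}, and torsion free of the right Mukai vector $v$). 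Two surjections $G \to k(x)$ give isomorphic kernels iff they differ by a scalar, so the data $(G,x) \in M_X(u) \times X$ together with a line in $\Hom(G, k(x)) = \Hom(G \otimes k(x), k(x)) \cong (G_x)^{\vee}$, an $r$-dimensional space, parametrizes the indeterminacy locus. Hence it is the projectivization $\PP((\km^{\vee}))$ of (the dual of) the restriction to $M_X(u) \times X$ of the universal bundle on $M_X(u) \times X$ — a $\PP^{r-1}$-bundle over $X \times M_X(u)$.

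Next I would transport this picture through $\Psi$. Applying $\Psi$ to $0 \to E \to E^{\vee\vee} \to k(x) \to 0$ and using that $k(x)$ is $\Psi\WIT_0$ with $\widehat{k(x)} = \kp_x$ a degree-zero line bundle on a fiber, while $E$ and $E^{\vee\vee}$ are $\Psi\WIT_1$ (Lemma \ref{lemma-recall-bridg} and the proof of Corollary \ref{Psi-of-stable-is-tf}, valid since $t < r+d$), the long exact cohomology sequence of the triangle yields $0 \to \kp_x \to \widehat{E} \to \widehat{E^{\vee\vee}} \to 0$. Writing $F := \widehat{E^{\vee\vee}}$, which runs over $M_{\widehat{X}}(\hat{u})$ as $E^{\vee\vee}$ runs over $M_X(u)$ (this is exactly the isomorphism $M_X(u) \cong M_{\widehat{X}}(\hat{u})$ from Theorem \ref{thm:iso-on-modspaces}, applicable because $\dim M_X(u) = 2t-2r < 2r$ forces $t - r < r$), we see $\widehat{E}$ is the middle term of an extension of $F$ by $\kp_x$. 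Since $\Psi$ is an equivalence, $\Hom(\kp_x, \widehat{E})$ is one-dimensional for each such $E$ (it corresponds to $\Hom(k(x), E^{\vee\vee}) \oplus \Hom(k(x),k(x))$... more precisely to the line $k(x) \hookrightarrow$ the quotient, dualized), so different $\widehat{E}$'s with the same $(F,\kp_x)$ correspond to distinct classes in $\PP\Ext^1_{\widehat{X}}(F,\kp_x)$; and $\Ext^1_{\widehat{X}}(F,\kp_x) \cong \Hom_X(E^{\vee\vee}, k(x))^{\vee}$... is again $r$-dimensional, by the equivalence $\Psi$ and Serre duality, or by a direct computation $\Ext^1_{\widehat{X}}(F,\kp_x) \cong \Ext^1_{X}(k(x), E^{\vee\vee})$ together with the locally-free resolution argument of Proposition \ref{E-hat-is-tf}. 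Thus $\Psi$ identifies the indeterminacy locus with the $\PP^{r-1}$-bundle over $X \times M_{\widehat{X}}(\hat{u})$ associated to the universal family of extensions $\PP\Ext^1(F,\kp_x)$.

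Finally, for the codimension statement: $X \times M_X(u)$ has dimension $2 + (2t - 2r)$, so the $\PP^{r-1}$-bundle over it has dimension $2 + 2t - 2r + (r-1) = 2t - (r-1)$, which is $r-1$ less than $\dim M_X(v) = 2t$. Since $M_X(v)$ is smooth and irreducible, this gives the asserted codimension $r-1$. The main obstacle I anticipate is the bookkeeping in the second paragraph: one must check carefully that the $\Ext^1$ group on $\widehat{X}$ is everywhere $r$-dimensional (so that the bundle structure is genuinely a $\PP^{r-1}$-bundle and not just a stratified object), and that the resulting $\widehat{E}$ is never torsion free — equivalently that $\kp_x \hookrightarrow \widehat{E}$ exhibits a genuine torsion subsheaf — which is where one uses that $\kp_x$ is a degree-zero line bundle supported on a fiber and hence a nonzero torsion sheaf on $\widehat{X}$; the fact that $\Ext^1(F,\kp_x) \neq 0$ for \emph{every} $F \in M_{\widehat{X}}(\hat u)$ and every fiber, rather than generically, is the point that needs the locally-free resolution argument borrowed from the proof of Proposition \ref{E-hat-is-tf}.
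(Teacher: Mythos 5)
Your proposal is correct and follows essentially the same route as the paper: identify the indeterminacy locus via Lemma \ref{E-hat-not-tf} as a $\PP^{r-1}$-bundle of kernels of surjections $E^{\vee\vee}\to k(x)$ over $X\times M_X(u)$, apply $\Psi$ to get the extensions $0\to\kp_x\to\widehat E\to F\to 0$, and compute the fiber dimension from $\Ext^1(F,\kp_x)\cong\Hom(E^{\vee\vee},k(x))$ (note the identification via the equivalence is this $\Hom$ itself, not its dual as you wrote, though the dimension $r$ is unaffected). Your extra checks — stability of the kernel via the generic fiber, and the $r$-dimensionality of $\Ext^1$ for every $F$ and every fiber — are exactly the points the paper leaves implicit.
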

\begin{proof}
Let $E$ in $M_X(v)$ be such that $\Psi(E)$ is not in $M_{\widehat{X}}(\hat{v})$. Then, by Lemma \ref{E-hat-not-tf},
there is a point $x$ in $X$ and an exact sequence
\begin{equation}\label{seq-e-doub-dual}
0 \longrightarrow E \longrightarrow E^{\vee \vee} \longrightarrow k(x) \longrightarrow 0.
\end{equation}
Moreover, any element of $M_X(u)$ appears in a sequence
like (\ref{seq-e-doub-dual}) for some $E$ in $M_X(v)$ and some point $x$ in $X$. Apply $\Psi$ to get
the exact sequence
$$0 \longrightarrow \kp_x \longrightarrow \widehat{E} \longrightarrow \widehat{E^{\vee \vee}} \longrightarrow 0$$
of sheaves on $\widehat{X}$. The sheaves $\widehat{E}$ are then parameterized, as $E$ moves in the indeterminacy locus, by extensions
of an element of $M_{\widehat{X}}(\hat{u})$ by a degree zero line bundle on a fiber of $\hat{\pi}$. Conversely, it is clear that
any such extension corresponds to the image of an element of $M_X(v)$ whose image under $\Psi$ is not torsion free.

The dimension of the fiber is easily calculated by $\Ext^1(F,\kp_x) \cong \Hom(E^{\vee\vee},k(x))$.
Having computed the dimension of $M_X(u)$, the statement about the
codimension is straightforward.
\end{proof}

Consider the equivalence $\Phi$, the relative quasi inverse of $\Psi$. Then $\Phi$ induces a birational
map
$$\rho: M_{\widehat{X}}(\hat{v}) \dashrightarrow M_X(v).$$
The same arguments used to prove Lemma \ref{lem-wit-is-it} apply to give the following result.

\begin{lemma}
Let $E$ be a sheaf in $M_{\widehat{X}}(\hat{v})$. Then $E$ is $\Phi\WIT_0$ if and only if it is $\Phi\IT_0$.
\end{lemma}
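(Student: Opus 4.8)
Since this statement coincides with Lemma~\ref{lem-wit-is-it}, whose proof never used the inequality $t<r$ --- only that $E$ is a stable torsion-free sheaf of Mukai vector $\hat{v}$ carrying the orthogonality condition of Theorem~\ref{theorem:stable-is-Pstable}, both still at our disposal when $r\le t<r+d$ --- the plan is simply to rerun that argument. The implication $\Phi\IT_0\Rightarrow\Phi\WIT_0$ is immediate from Definition~\ref{def:wit}, so I would assume $E$ is $\Phi\WIT_0$ and prove that the sheaf $\Phi(E)$ is locally free.

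First I would record, via Lemma~\ref{lemma-recall-bridg}, that $\Phi(E)$ has rank $r>0$ and hence full support, and introduce its maximal torsion subsheaf $T\subseteq\Phi(E)$. Step one: $T$ has no zero-dimensional component. Indeed, a zero-dimensional subsheaf $k(p)\subset\Phi(E)$, with $X_0$ the fibre of $\pi$ through $p$, makes every degree-zero line bundle on $X_0$ map nontrivially to $\Phi(E)$; reading these maps through the equivalence (each such line bundle being, up to shift, the $\Phi$-transform of a skyscraper supported on the corresponding fibre $\widehat{X}_0$) produces $\Ext^1_{\widehat{X}}(E,k(x))\neq0$ for \emph{every} $x\in\widehat{X}_0$. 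Since a torsion-free sheaf is locally free off a finite set, only finitely many of these groups can be nonzero, a contradiction; thus $T$ is pure of dimension one. Step two: Lemma~\ref{ortho-goes-to-ortho} transfers the orthogonality condition from $E$ to $\Phi(E)$, which forces $\Phi(E)$ to be fibrewise torsion-free over the generic point of $\PP^1$, so $T$ is supported on finitely many fibres, i.e.\ it is a fibre sheaf.

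The core, step three, is to exclude a nonzero fibre sheaf $T\subseteq\Phi(E)$; reducing to $T$ supported on a single fibre $X_0$, I would split off the maximal subsheaf $T_1\subseteq T$ all of whose Harder--Narasimhan slopes on $X_0$ are positive. Such a $T_1$ is $\Psi\WIT_0$, so if $T_1\neq0$ then $\Psi(T_1)$ is a nonzero torsion fibre sheaf on $\widehat{X}$ and the adjunction $\Hom(\Psi(-),E)\cong\Hom(-,\Phi(E))$ applied to $T_1\hookrightarrow\Phi(E)$ gives a nonzero map from a torsion sheaf into $E$ --- impossible, $E$ being torsion-free. If $T_1=0$, then $\mu_{max}(T)\le0$, $T$ is $\Psi\WIT_1$, and the same transport yields again a nonzero homomorphism from a torsion fibre sheaf into $E$. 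Hence $T=0$ and $\Phi(E)$ is torsion-free; applying the same idea a last time to the double-dual sequence $0\to\Phi(E)\to(\Phi(E))^{\vee\vee}\to T'\to0$ with $T'$ zero-dimensional (hence $\Psi\WIT_0$) promotes ``torsion-free'' to ``locally free''. I expect the one delicate point --- exactly as in Lemma~\ref{lem-wit-is-it} --- to be this fibrewise slope bookkeeping together with the tracking of shifts when moving subsheaves across $\Psi$ and $\Phi$, so that applying $\Psi$ genuinely delivers a nonzero map \emph{into} $E$ and not into a shift of it; all the rest is formal.
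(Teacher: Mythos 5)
Your proposal is correct and matches the paper's treatment exactly: the paper proves this lemma by the single line ``the same arguments used to prove Lemma~\ref{lem-wit-is-it} apply,'' and you have correctly observed that that proof (zero-dimensional torsion excluded via $\Ext^1(E,k(x))$, purity and fibre-support via the orthogonality condition, fibre torsion excluded by slope analysis under $\Psi$, and the double-dual step for local freeness) nowhere uses $t<r$. The ``delicate point'' you flag about tracking shifts through $\Psi\circ\Phi=[1]$ is also present, and treated just as informally, in the paper's own proof of Lemma~\ref{lem-wit-is-it}.
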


\begin{proposition}\label{indet-of-phi}
The indeterminacy locus of $\rho$ is the projective bundle over $X \times M_{\widehat X} (\hat{u})$ associated
to the universal extension of the form $\PP\Ext^1(\kp_x,F)$, for $F$ in $M_{\widehat{X}}(\hat{u})$ and $\kp_x$ in $\widehat{\widehat{X}}
\cong X$.
\end{proposition}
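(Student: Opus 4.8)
The plan is to mirror the proof of Proposition \ref{indet-of-psi}, running the Fourier--Mukai machinery through $\Phi$ in place of $\Psi$. First I would identify the indeterminacy locus of $\rho$ with the locus of $E$ in $M_{\widehat X}(\hat v)$ for which $\Phi(E)$ fails to be a stable torsion free sheaf on $X$. By Lemma \ref{lem-wit-is-it} (together with the displayed Lemma just above this statement) and Lemma \ref{phi-wit-little-t}'s analogue for $t\geq r$, one knows that $\Phi(E)$ is $\Phi\WIT_0$ and, when torsion free, automatically stable of Mukai vector $v$; so the only obstruction is that $\Phi(E)$ — or rather its image sheaf — may fail to be locally free, i.e. may be a genuine length-$2$ complex once one passes back through $\Psi$. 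Concretely, as in Section \ref{new-moduli}, the indeterminacy points of $\rho$ are exactly those $E$ for which $\Phi(E)$ is torsion free but not a vector bundle, equivalently for which there is a point $x\in X$ and a short exact sequence $0\to\Phi(E)\to(\Phi(E))^{\vee\vee}\to k(x)\to 0$.

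Next I would apply $\Psi$ to this sequence. Using the table of how $\Psi$ acts on fiber sheaves, $\Psi(k(x))=\kp_x=T_{1,0}$ is a degree-zero line bundle on the fiber through $x$, while $\Psi$ of the double dual $(\Phi(E))^{\vee\vee}$ — a stable vector bundle with Mukai vector $u=(r,\Lambda,c+1)$ — is, after the shift $[1]$, a stable torsion free sheaf $F$ on $\widehat X$ with Mukai vector $\hat u$; and since $\Psi\circ\Phi=[1]$, applying $\Psi$ to the whole sequence recovers $E$ as the middle term of an extension. The point is to read the sequence in the right direction: the short exact sequence on $X$ becomes, after $\Psi[1]$, a short exact sequence $0\to F\to E\to \kp_x\to 0$ on $\widehat X$ (the order of $F$ and $\kp_x$ is swapped relative to Proposition \ref{indet-of-psi} precisely because we have interchanged the roles of $\Psi$ and $\Phi$, which acts by the transpose-inverse matrix). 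So the sheaves $E$ in the indeterminacy locus of $\rho$ are exactly the nontrivial extensions of a degree-zero line bundle $\kp_x$ on a fiber of $\hat\pi$ by an element $F\in M_{\widehat X}(\hat u)$, and conversely every such extension arises this way. This identifies the locus with $\PP\Ext^1(\kp_x,F)$ over $X\times M_{\widehat X}(\hat u)$, the parameter $x$ running over $X\cong\widehat{\widehat X}$ via the canonical identification.

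Finally I would check the bundle structure: one verifies that $\dim\Ext^1(\kp_x,F)$ is constant as $(x,F)$ varies — equivalently, by Serre duality on $\widehat X$ and the fact that $F$ is a vector bundle on the relevant fiber neighbourhood, that the relevant Ext group has constant rank — so that the extensions genuinely form a projective bundle rather than just a stratified family. Standard base-change then upgrades the fibrewise description to a global $\PP^{?}$-bundle over $X\times M_{\widehat X}(\hat u)$. The main obstacle I anticipate is exactly this last constancy-of-dimension point and the verification that the identification $\Phi(E)$ not locally free $\iff$ $E$ is such a nontrivial extension is a bijection onto the indeterminacy locus (rather than merely containing it); both are handled by repeating, with the roles of $\Phi$ and $\Psi$ reversed, the arguments of Lemma \ref{E-hat-not-tf} and Proposition \ref{indet-of-psi}, using that $t<r+d<2r$ forces $\Phi(E)$ to have at most one point of non-local-freeness.
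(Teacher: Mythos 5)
There is a genuine gap: you have mischaracterized the indeterminacy locus of $\rho$. You claim it consists of those $E$ for which $\Phi(E)$ is a torsion free sheaf that is not locally free, and you then dualize $\Phi(E)$ and push the sequence $0\to\Phi(E)\to(\Phi(E))^{\vee\vee}\to k(x)\to 0$ through $\Psi$. But the lemma stated immediately before this proposition says precisely that for $E\in M_{\widehat{X}}(\hat{v})$, being $\Phi\WIT_0$ is equivalent to being $\Phi\IT_0$: if $\Phi(E)$ is a sheaf at all, it is automatically locally free and (by the orthogonality condition) stable, so $E$ is \emph{not} an indeterminacy point. The locus you describe is empty. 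The actual indeterminacy locus is the set of $E$ which fail to be $\Phi\WIT_0$, i.e.\ for which $\Phi(E)$ is a genuine two-term complex; this is what Section \ref{new-moduli} asserts about $\rho$ (the ``sheaf with torsion'' description there pertains to $\varepsilon$, not to $\rho$). Since $\Phi(E)$ is not a sheaf, the double-dual sequence you want to transport does not exist.

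The paper instead works entirely on $\widehat{X}$: by \cite[Lemma 6.1]{bridgellip} every $E$ sits in a unique exact sequence $0\to A\to E\to B\to 0$ with $A$ being $\Phi\WIT_0$ and $B$ being $\Phi\WIT_1$, and $E$ is an indeterminacy point iff $B\neq 0$. One then shows, as in Lemma \ref{phi-wit-little-t}, that $B$ is a pure fiber sheaf of non-positive maximal slope and applies Proposition \ref{ppp-fib} together with the hypothesis $t<r+d$ to force $B$ to have rank $1$ and degree $0$ on a single fiber, i.e.\ $B\cong\kp_x$; this exhibits $E$ directly as an extension $0\to F\to E\to\kp_x\to 0$ with $F\in M_{\widehat{X}}(\hat{u})$, and conversely every such extension is torsion free and satisfies the orthogonality condition, hence is stable. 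Note also that your claimed ``swap of order'' is unjustified even formally: applying $\Psi$ to a sequence $0\to G\to G^{\vee\vee}\to k(x)\to 0$ of sheaves of negative slope produces $\kp_x$ as a \emph{subsheaf} of the middle term (exactly as in Proposition \ref{indet-of-psi}), not as a quotient, so your route would land on $\PP\Ext^1(F,\kp_x)$ rather than on the dual bundle $\PP\Ext^1(\kp_x,F)$ --- and if both indeterminacy loci were the same bundle rather than dual ones, $\varepsilon$ could not be a nontrivial Mukai flop.
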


\begin{proof}
Let $E$ be in $M_{\widehat{X}}(\hat{v})$.
First, $E$ is not in the
indeterminacy locus of $\rho$ if and only if $E$ is $\Phi\WIT_0$. Indeed, if $E$ is $\Phi\WIT_0$,
then it is $\Phi\IT_0$ and $\Phi(E)$ is then a locally free sheaf on $X$ of rank $r$, fiber degree $-d$ and
it is $(r,-d)^{\perp}$. It follows that $\Phi(E)$ is in $M_X(v)$.
Conversely, the fact that $\rho(E)$ is in $M_X(v)$ tells in particular that $\Phi(E)$ is a sheaf.

Recall that by \cite[Lemma 6.1]{bridgellip}, for all sheaves $E$ on $\widehat{X}$, there exists a unique short exact sequence
\begin{equation}
0 \longrightarrow A \longrightarrow E \longrightarrow B \longrightarrow 0,
\end{equation}
where $A$ is $\Phi\WIT_0$ and $B$ is $\Phi\WIT_1$. 

As in the proof of Lemma \ref{phi-wit-little-t}, we can show that $B$ is a pure fiber sheaf of non-positive
maximal slope. Suppose that $B$ is supported on a single fiber and let $r_0$ and $-d_0$ be respectively
its rank and its degree as a sheaf on $\widehat{X}_0$. Recall that $E$ has rank $d$ and fiber degree
$r$. Then applying point (1) of Proposition \ref{ppp-fib}, we get $r_0r+dd_0 \leq t$. Since $r_0 >0$,
we get that $B$ has rank 1 and degree 0. Moreover, $B$ cannot be supported on any other fiber, by point (2)
of Proposition \ref{ppp-fib}.
This identifies also the Mukai vector of $A$, up to a twist with a line bundle.

Finally, any extension of the form
$$0 \longrightarrow F \longrightarrow E \longrightarrow \kp_x \longrightarrow 0$$
is stable. Indeed, such an $E$ is torsion free and satisfies the orthogonality condition.
\end{proof}

Now let $U \subset M_X(v)$ be the subset where $\varepsilon$ is defined and $\widehat{U} \subset M_{\widehat{X}}(\hat{v})$
the subset where $\rho$ is defined. It is now clear that $\varepsilon(U) = \widehat{U}$ and $\rho(\widehat{U}) = U$.
Moreover, $\varepsilon \circ \rho = \rho \circ \varepsilon = {\mathrm{Id}}$.
Compare Propositions \ref{indet-of-psi} and \ref{indet-of-phi} by using Serre duality.
A dimension calculation shows that the indeterminacy loci have codimension $r-1$.
If $t=r=2$ the indeterminacy loci have codimension 1, which gives an isomorphism. In any other case, since $r>2$,
we are dualizing a projective bundle and then we have a nontrivial Mukai flop.

\begin{remark}
Suppose that $t \geq r+d$. Then the indeterminacy locus of $\varepsilon$ gets bigger, since it contains all
those stable vector bundles $E$ which have ``very unstable'' restriction to some fiber. Anyway, if $r>2$, it is
clear by the description of the indeterminacy loci of $\varepsilon$ and $\rho$ that we would
never have an isomorphism for $t \geq r$. For $r>2$, we obtained that the condition $t<r$ is optimal.
\end{remark}

\begin{remark}
If $r=2$, consider the case $t=4$. In this case, there are sheaves $E$ in $M_X(v)$ such that
the cokernel of the map $E \hookrightarrow E^{\vee \vee}$ has length 2. These sheaves belong to
the indeterminacy locus and their locus forms a projective bundle $\PP \to X \times M_{\widehat{X}}(\hat{w})$,
where $w=(2,\Lambda,c+2)$. If we go through the proof of Theorem \ref{mukai-flop}, we can define
a stratum in the indeterminacy locus of $\rho$ which is parameterized by $\PP^{\vee}$. This stratum is
indeed given by those semistable sheaves admitting $\kp_{x_1} \oplus \kp_{x_2}$ as a quotient. 
This should lead to a non-trivial flop and then to a contradiction to \cite[III, Conjecture 4.13]{friedellip}.
\end{remark}

\section{Isomorphisms between the Picard groups}\label{sect-pic-iso}

The birational map $\ee$ has a very nice description if the dimension of the moduli space is not too
big. It is clear, that as soon as $t \geq r+d$, there are stable sheaves $E$ in $M_X(v)$ which
admit on some fiber a destabilizing subbundle of positive slope. This makes the indeterminacy
locus of $\ee$ more complicated. Anyway, in this Section we show that,
with no condition on the dimension $2t$, the map $\ee$ induces
an isomorphism between the Picard groups $\Pic(M_X(v))$ and $\Pic(M_{\widehat{X}}(\hat{v}))$. 

\begin{theorem}\label{iso-betw-pic}
The birational map $\varepsilon: M_X(v) \dashrightarrow M_{\widehat{X}}(\hat{v})$ induces an isomorphism
$$\Pic(M_X(v)) \simeq \Pic(M_{\widehat{X}}(\hat{v})).$$
\end{theorem}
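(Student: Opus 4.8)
The plan is to prove that $\varepsilon$ is an isomorphism in codimension one and then to invoke the fact that the Picard group of a smooth projective variety depends only on its codimension-one geometry. Two ingredients are already available: $\varepsilon$ and the map $\rho$ induced by the quasi-inverse $\Phi$ are mutually inverse birational maps (shown above, $\varepsilon\circ\rho=\rho\circ\varepsilon=\mathrm{Id}$), and $M_X(v)$ and $M_{\widehat X}(\hat v)$ are smooth projective varieties of the same dimension with trivial canonical bundle — being smooth moduli spaces of stable sheaves on a K3 surface they carry a holomorphic symplectic form (Mukai), so $\omega_{M_X(v)}\cong\mathcal{O}$ and $\omega_{M_{\widehat X}(\hat v)}\cong\mathcal{O}$.

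The first step is a discrepancy computation. Choose a smooth projective variety $W$ with birational morphisms $p\colon W\to M_X(v)$ and $q\colon W\to M_{\widehat X}(\hat v)$ resolving $\varepsilon$. Since both targets are smooth we have $\omega_W\cong\mathcal{O}_W(E_p)$ and $\omega_W\cong\mathcal{O}_W(E_q)$, where $E_p\geq 0$ (respectively $E_q\geq 0$) is an effective divisor supported exactly on the $p$-exceptional (respectively $q$-exceptional) prime divisors of $W$. Hence $E_p-E_q$ is the divisor of a rational function $\phi$; pushing forward along $p$ and using that $E_p$ and $E_q$ are $p$-exceptional, the divisor of $\phi$ on the projective variety $M_X(v)$ vanishes, so $\phi$ is constant and $E_p=E_q$. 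Therefore the $p$-exceptional prime divisors coincide with the $q$-exceptional ones; in particular $\varepsilon$ contracts no divisor, and neither does $\rho$. Since the indeterminacy locus of a rational map from a smooth variety to a projective variety has codimension $\geq 2$, it follows by a standard argument with strict transforms that there are open subsets $U\subseteq M_X(v)$ and $\widehat U\subseteq M_{\widehat X}(\hat v)$, each with complement of codimension $\geq 2$, such that $\varepsilon$ restricts to an isomorphism $U\to\widehat U$. (For $r\geq 3$ and $t<r+d$ this can instead be read off from the descriptions of the indeterminacy loci in Propositions~\ref{indet-of-psi} and~\ref{indet-of-phi}, which have codimension $r-1$; the discrepancy argument is what handles the remaining, more intricate, values of $t$.)

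The conclusion is then formal. As $M_X(v)$ is smooth it is locally factorial, so $\Pic(M_X(v))=\mathrm{Cl}(M_X(v))$, and removing the closed subset $M_X(v)\setminus U$ of codimension $\geq 2$ does not change the divisor class group; hence restriction gives an isomorphism $\Pic(M_X(v))\cong\Pic(U)$, and likewise $\Pic(M_{\widehat X}(\hat v))\cong\Pic(\widehat U)$. Combining these with the isomorphism $\Pic(\widehat U)\cong\Pic(U)$ induced by $\varepsilon|_U$ produces an isomorphism $\Pic(M_X(v))\cong\Pic(M_{\widehat X}(\hat v))$, which by construction carries the class of a prime divisor $D$ to that of $\overline{\varepsilon(D\cap U)}$ — that is, it is the map induced by $\varepsilon$. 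Iterating along the Euclid--Fourier--Mukai algorithm (each step being a twist by a line bundle, hence an isomorphism of moduli spaces, or a transform $\Psi[1]$ as above) then yields $\Pic(M_X(v))\cong\Pic(\Hilb^{t}(\widehat X))$ for every $t$.

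The main obstacle is the claim that $\varepsilon$ is an isomorphism in codimension one for \emph{all} $t$: once $t\geq r+d$ the naive assignment $E\mapsto\widehat E$ fails on a locus that can have codimension one, so one cannot simply inspect the indeterminacy locus as in Section~4, and one must know in advance that $\varepsilon$ extends across its divisorial part. The triviality of the canonical bundles of the two moduli spaces — a global property of moduli of sheaves on a K3 surface, not anything specific to the Fourier--Mukai transform — is precisely what forces this, via the discrepancy computation, and is why the Picard-group isomorphism holds with no restriction on the dimension.
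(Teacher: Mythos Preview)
Your proof is correct and takes a genuinely different route from the paper's. You invoke the general birational-geometry fact that a birational map between smooth projective varieties with trivial canonical bundle is an isomorphism in codimension one --- here $\omega_{M_X(v)}$ and $\omega_{M_{\widehat X}(\hat v)}$ are trivial by Mukai's holomorphic symplectic form on smooth moduli of stable sheaves on a K3 surface --- after which the Picard isomorphism is formal. (One wording quibble: in the discrepancy step you write ``using that $E_p$ and $E_q$ are $p$-exceptional'', but $E_q$ being $p$-exceptional is the conclusion, not a hypothesis. The clean order is: $p_*\operatorname{div}_W(\phi)=-p_*E_q\leq 0$ is principal on the projective variety $M_X(v)$, hence zero, so $E_q$ is $p$-exceptional; then $\phi$ is constant and $E_p=E_q$.) The paper instead works by explicit sheaf theory: it decomposes the indeterminacy locus of $\varepsilon$ as $K\cup L$ (non-locally-free sheaves, respectively bundles with a positive-slope subbundle on some fiber) and that of $\rho$ as $\widehat K\cup\widehat L$, shows $L,\widehat L$ have codimension $\geq 2$ (Lemmas \ref{codim-of-L}, \ref{codim-of-L^}) and $K,\widehat K$ have codimension $r-1$ (Lemma \ref{codim-of-K^}); for $r\geq 3$ this already gives an isomorphism in codimension one, while for $r=2$ a $\Theta$-line-bundle construction from \cite{Hdual} exhibits both sides, outside codimension two, as $\PP^1$-bundles over a common base $\bar M$, matching the remaining $\ZZ$-summands. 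Your approach is shorter, uniform in $r$ and $t$, and makes transparent why no hypothesis on $t$ is needed; the paper's approach in exchange produces explicit stratifications and codimensions of the indeterminacy loci, information of independent geometric interest that also feeds into the Mukai-flop description of Section~4.
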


\begin{corollary}\label{iso-pic-of-hilb}
Let $v = (r, \Lambda, c)$ be a Mukai vector on $X$ with $\Lambda.f = -d$, with $r$, $d$ coprime, and $d$
any integer. Then we have an isomorphism
$$\Pic(M_X(v)) \simeq \Pic(\Hilb^t(X)),$$
where $2t$ is the dimension of the moduli space $M_X(v)$.
\end{corollary}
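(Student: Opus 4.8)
The plan is to reduce everything to Theorem \ref{iso-betw-pic} by running the Euclid-Fourier-Mukai algorithm of Section \ref{section:elliptic-fibration-and-fm} and tracking Picard groups along the way. First I would normalize: tensoring $v$ with a suitable line bundle $L$ on $X$ produces a Mukai vector $v'=(r,\Lambda',c')$ with $\Lambda'.f=-d'$ and $0<d'<r$. Since $E\mapsto E\otimes L$ is an isomorphism $M_X(v)\isomor M_X(v')$ which leaves $t$, hence the dimension $2t$, unchanged, it induces $\Pic(M_X(v))\simeq\Pic(M_X(v'))$, and from now on we may assume $0<d<r$, so that we are exactly in the setting of Section \ref{section:elliptic-fibration-and-fm}.

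Next I would run the algorithm. It produces a finite chain of Mukai vectors $v=v_0,v_1,\dots,v_k=(1,0,t)$ in which each $v_{i+1}$ is obtained from $v_i$ either by applying $\Psi[1]$ (using $\widehat{X}\cong X$, which holds because $\pi$ admits a section) or by tensoring with a line bundle; throughout, the rank and the fiber degree stay coprime and $t$ stays fixed. Consequently every moduli space in the chain is again fine, smooth, projective, irreducible and of dimension $2t$, so Theorem \ref{iso-betw-pic} applies to each Fourier-Mukai step and yields an isomorphism of Picard groups, while each twisting step is an isomorphism of moduli spaces and hence of Picard groups a fortiori. Composing these finitely many isomorphisms gives $\Pic(M_X(v))\simeq\Pic(M_{\widehat{X}}(1,0,t))$. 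Since the moduli space attached to the terminal vector $(1,0,t)$ is the Hilbert scheme $\Hilb^t(\widehat{X})$ and $\widehat{X}\cong X$ — this is precisely the identification already used in Corollary \ref{cor:iso-M-H} — we conclude $\Pic(M_X(v))\simeq\Pic(\Hilb^t(X))$.

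The only point that really needs care, and which I expect to be the main (if minor) obstacle, is checking that the hypotheses of Theorem \ref{iso-betw-pic} genuinely hold at every intermediate vector $v_i$: that the moduli space $M(v_i)$ is fine, smooth, projective, irreducible and non-empty. Fineness follows from the coprimality of rank and fiber degree, which the Euclidean step preserves; smoothness and irreducibility are then automatic from the facts recalled at the beginning of Section \ref{section:elliptic-fibration-and-fm}; and non-emptiness propagates because consecutive moduli spaces in the chain are linked by the birational maps induced by $\Psi$ and $\Phi$, so all of them are birational to $\Hilb^t(X)$, which is non-empty. Note that no bound on $t$ is needed, precisely because Theorem \ref{iso-betw-pic} carries no restriction on the dimension.
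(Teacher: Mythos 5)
Your proposal is correct and follows essentially the same route as the paper: normalize by twisting so that $0<d<r$, then apply Theorem \ref{iso-betw-pic} recursively to the Fourier--Mukai steps of the Euclid--Fourier--Mukai algorithm (with the twisting steps being honest isomorphisms of moduli spaces), terminating at $\Hilb^t(\widehat X)\cong\Hilb^t(X)$. Your extra care in checking that each intermediate moduli space remains fine, smooth, projective, irreducible and non-empty is a detail the paper leaves implicit, but it is not a different argument.
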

\begin{proof}
By tensoring with a line bundle, we can suppose, up to isomorphism of $M_X(v)$, that $0<d<r$.
The proof follows applying recursively Theorem \ref{iso-betw-pic} to the steps of the Euclid--Fourier--Mukai
algorithm.
\end{proof}

The whole Section is dedicated to the proof of Theorem \ref{iso-betw-pic}.

First of all, Proposition \ref{E-hat-is-tf} gives us the indeterminacy locus of the map $\varepsilon$
induced by $\Psi$. This is the union of
two closed subsets, $K$ and $L$, where:
$$K = \{ E \in M_X(v) \, \vert \, E {\text{ is not locally free}} \}$$
$$L = \{ E \in M_X(v) \, \vert \, \exists {\text{ a fiber }} X_0 {\text{ and a sheaf }} F_0 {\text{ on }} X_0
{\text{ with }} \mu(F_0) > 0 {\text{ and }} F_0 \hookrightarrow E_{\vert X_0} \}.$$  

On the other side, Lemma \ref{lem-wit-is-it} implies that the indeterminacy locus of the map $\rho$ induced by $\Phi$
is given by those stable sheaves $F$ in $M_{\widehat{X}}(\hat{v})$ which are not $\Phi\WIT_0$. Recall that any sheaf
$F$ on $\widehat{X}$ fits a unique exact sequence
\begin{equation}\label{sequence}
0 \longrightarrow A \longrightarrow F \longrightarrow B \longrightarrow 0,
\end{equation}
where $A$ is $\Phi\WIT_0$ and $B$ is $\Phi\WIT_1$. If $F$ is in $M_{\widehat{X}}(\hat{v})$ is then clear that
$B$ is a fiber sheaf of non-positive slope. Then the indeterminacy locus of $\rho$ can be seen as the
union of two closed subset $\widehat{K}$ and $\widehat{L}$, where:
$$\widehat{K} = \{ F \in M_{\widehat{X}}(\hat{v}) \, \vert \, \mu(B) = 0 \}$$
$$\widehat{L} = \{ F \in M_{\widehat{X}}(\hat{v}) \, \vert \, \mu(B) < 0 \}.$$

By Proposition \ref{indet-of-psi} We have that the codimension of $K$ in $M_X(v)$ is $r-1$.
Let us consider the stratification of $K$ whose strata are given by
$$K_i := \{ E \text{ in } K \, \vert \, l(\coker (E \to E^{\vee\vee})) \geq i \}.$$
It is clear that $K_1=K$, that $K_i$ is empty for $i \gg 0$, and that $K_{i+1}$ is a proper closed subset
of $K_i$ for all $i$. On the other side, any $F$ in $\widehat{K}$ fits the exact sequence
(\ref{sequence}) with $\deg B = 0$. We consider the stratification of $\widehat{K}$ defined by
$$\widehat{K}_i := \{ F \text{ in } \widehat{K} \, \vert \,
\mbox{fiber-}\rk(B) \geq i \}.$$
It is clear that $\widehat{K}_1=\widehat{K}$, that $\widehat{K}_i$ is empty for $i \gg 0$, and that
$\widehat{K}_{i+1}$ is a proper closed subset of $\widehat{K}_i$ for all $i$.
The codimension of $\widehat{K}$ is then established via duality between the strata.  

\begin{lemma}\label{codim-of-K^}
The subsets $K$ and $\widehat{K}$ have codimension $r-1$ in  $M_X(v)$
and $M_{\widehat{X}}(\hat{v})$ respectively.
\end{lemma}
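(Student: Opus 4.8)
The plan is to read off $\dim K$ and $\dim\widehat K$ from the two filtrations $K=K_1\supset K_2\supset\cdots$ and $\widehat K=\widehat K_1\supset\widehat K_2\supset\cdots$ already set up, by showing that $K_i\setminus K_{i+1}$ and $\widehat K_i\setminus\widehat K_{i+1}$ both have dimension $2t-i(r-1)$. Since $r\ge 2$ the topmost stratum $(i=1)$ then dominates, giving $\dim K=\dim\widehat K=2t-(r-1)$, hence codimension $r-1$.

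For $K$ I would fix $E\in K_i\setminus K_{i+1}$, so that $Q:=\coker(E\hookrightarrow E^{\vee\vee})$ has length exactly $i$. Since $E$ is stable and torsion free, $G:=E^{\vee\vee}$ is a stable vector bundle, and from $0\to E\to G\to Q\to 0$ its Mukai vector is $u_i=(r,\Lambda,c+i)$, which is primitive because $\gcd(r,d)=1$; thus $M_X(u_i)$, whenever non-empty, is irreducible, smooth, projective of dimension $2t-2ri$. (One first enlarges $N$ in the polarization $H+N\cdot f$ once and for all, using Proposition \ref{prop24}, so that it is simultaneously almost $u_i$-fiberlike for the finitely many $i$ with $M_X(u_i)\neq\emptyset$.) Conversely, if $G\in M_X(u_i)$ and $G\twoheadrightarrow Q$ is any length-$i$ quotient, then $\ker(G\to Q)$ is torsion free with the same restriction to a general fibre as $G$, hence stable for $H$; the two constructions are mutually inverse ($E$ being simple, the subsheaf $E\subset E^{\vee\vee}$ is unique in its isomorphism class), so $K_i\setminus K_{i+1}$ is identified with the relative length-$i$ Quot scheme of the universal sheaf over $M_X(u_i)$, whose fibres have dimension $i(r+1)$ (the generic fibre being $i$ points of $X$ together with a point of $\PP^{r-1}$ at each). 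Hence $\dim(K_i\setminus K_{i+1})=(2t-2ri)+i(r+1)=2t-i(r-1)$; for $i\ge 2$ this is strictly below $2t-(r-1)$, and since each $K_{i+1}$ is properly closed in $K_i$ we get $\dim K=2t-(r-1)$. (For $t<r$ every $M_X(u_i)$ is empty and $K=\emptyset$, consistently with Lemma \ref{r-condition}.)

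For $\widehat K$ I would transport the above through $\Phi$ and Serre duality, exactly as Propositions \ref{indet-of-psi} and \ref{indet-of-phi} are matched. For $F\in\widehat K_i\setminus\widehat K_{i+1}$ with canonical sequence $0\to A\to F\to B\to 0$, the sheaf $B$ is a $\Phi\WIT_1$ fibre sheaf of fibre-rank $i$ and degree $0$, hence fibrewise semistable of slope $0$; correspondingly $A$ ranges over a space $M_{\widehat X}(\hat u_i)$ of dimension $2t-2ri$, the generic $B$ is $\bigoplus_{j=1}^i\kp_{x_j}$ with $x_j\in\widehat{\widehat{X}}\cong X$, and $F$ is recovered from $A$ and an extension class in $\prod_j\PP\Ext^1(\kp_{x_j},A)$. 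Each factor is a $\PP^{r-1}$: this is the numerical count already carried out for Proposition \ref{indet-of-phi} (the sheaf $A$ has the same rank and fibre degree as $\hat v$), and by Serre duality on the K3 surface $\widehat X$, $\Ext^1(\kp_x,A)\cong\Ext^1(A,\kp_x)^{\vee}$, so it is exactly dual to the count controlling the fibre of $K_i\setminus K_{i+1}$ after applying $\Psi$. Hence $\dim(\widehat K_i\setminus\widehat K_{i+1})=2i+(2t-2ri)+i(r-1)=2t-i(r-1)$, and $\operatorname{codim}\widehat K=r-1$ follows as before.

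The step I expect to require the most care is controlling the non-generic parts of the strata: one must check that when the points $x_j$ collide (on the $K$ side), or when $B$ degenerates away from a direct sum of degree-$0$ line bundles on distinct fibres (on the $\widehat K$ side), the resulting loci do not acquire dimension exceeding $2t-(r-1)$. On the $K$ side this is the input that the length-$i$ Quot scheme of a rank-$r$ vector bundle on a surface has dimension $i(r+1)$ — every stratum, indexed by the number of support points and the local types of the quotient, having dimension at most this — and on the $\widehat K$ side it is the same statement transported through $\Phi$. The remaining pieces, namely the two Mukai-vector computations and the Serre-duality identification of the relevant $\Ext$-groups, are routine.
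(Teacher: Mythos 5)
Your proof is correct and establishes the same stratum-by-stratum comparison, but it takes a more computational route than the paper on the $X$ side. The paper's proof does not recompute $\dim K$ at all: it takes $\operatorname{codim}K=r-1$ as already known from Proposition \ref{indet-of-psi} and only proves the new half, namely that each $\widehat K_i\setminus\widehat K_{i+1}$ is carried by $\Phi$ (after dualizing the canonical sequence $0\to A\to F\to B\to 0$) onto $K_i\setminus K_{i+1}$, the two strata being locally \emph{dual} projective bundles over the same base via the Serre-duality identification $\Ext^1(B,A)\cong\Ext^1(A,B)^*$; equality of codimensions then follows with no dimension count. You instead compute every stratum on both sides from scratch: on $X$ via the identification of $K_i\setminus K_{i+1}$ with the relative length-$i$ Quot scheme over the locally free locus of $M_X(u_i)$ together with the fact that the punctual Quot scheme of a rank-$r$ bundle on a surface has dimension $i(r+1)$ in all its strata, and on $\widehat X$ via the extension description and Serre duality. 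This is longer but buys two things the paper elides: (i) Proposition \ref{indet-of-psi} was proved under the hypothesis $t<r+d$, where only the stratum $K_1\setminus K_2$ occurs, so for large $t$ the assertion $\operatorname{codim}K=r-1$ genuinely needs your estimate $\dim(K_i\setminus K_{i+1})=2t-i(r-1)$ for the deeper strata; (ii) your count $2i+(2t-2ri)+i(r-1)$ makes explicit that for $i\geq 2$ the base of the bundle on the $\widehat X$ side is the $2i$-dimensional family of sums of degree-zero line bundles $\bigoplus_j\kp_{x_j}$ on $i$ fibres (times $M_{\widehat X}(\hat w_i)$), rather than the two-dimensional $J_{\widehat X}(i,0)$ written in the paper, which cannot be taken literally since there are no stable rank-$i$ degree-$0$ fibre sheaves for $i\geq 2$. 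The two points you flag yourself are indeed the ones to secure, and both are fine: the polarization can be chosen simultaneously almost $u_i$-fiberlike for the finitely many relevant $i$ by taking the maximum of the bounds $N_0$ of Proposition \ref{prop24}, and the degenerate loci are controlled on the $X$ side by the Quot-scheme dimension statement and on the $\widehat X$ side by transporting it through the stratum-level duality, which is precisely the ingredient your argument shares with the paper's.
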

\begin{proof}
We are actually proving more, that is that the strata $K_i$ and $\widehat{K}_i$ are locally given
by dual projective bundles over the same base.

Consider $F$ in $\widehat{K}_i$ and the unique short exact sequence (\ref{sequence}).
Let $\hat{w}_i$ be the Mukai vector of $A$, then the stratum $\widehat{K}_i$ is given,
outside $\widehat{K}_{i+1}$, by the projective bundle $\PP {\mathrm{Ext}}^1(B,A)$ over
$J_{\widehat{X}}(i,0) \times M_{\widehat{X}}(\hat{w}_i)$.
Dualizing the sequence and applying the functor
$\Phi$, arguing as in Proposition \ref{indet-of-phi} we get unique stable sheaf $E$ in $K_i$.
Moreover, $E$ belongs to $K_{i+1}$ if and only if $F$ belongs to $\widehat{K}_{i+1}$.
This shows, recalling that ${\mathrm{Ext}}^1(B,A) = {\mathrm{Ext}}^1(A,B)^*$ by Serre duality, that $K_i$ and
$\widehat{K}_i$ are locally given by dual projective bundles over the same base.
\end{proof}

\begin{lemma}\label{codim-of-L} 
The subset $L$ has codimension at least $2$ in $M_X(v)$.
\end{lemma}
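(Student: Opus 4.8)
The plan is to argue by induction on $\dim M_X(v)=2t$. If $t<r$ then every sheaf in $M_X(v)$ is locally free (Lemma~\ref{r-condition}), so a sub-bundle $F_0\hookrightarrow E|_{X_0}$ of positive slope would contradict part~(1) of Proposition~\ref{ppp-fib}, which forces $r\deg(F_0)+\rk(F_0)\,d\le t$ and hence $r+d\le t$; thus $L=\varnothing$ and the claim is vacuous. Assume now $t\ge r$ and write $L=(L\cap K)\cup(L\setminus K)$. For $L\cap K$ recall that $K$ is irreducible --- generically a $\PP^{r-1}$-bundle over $X\times M_X(u)$ with $u=(r,\Lambda,c+1)$ --- of codimension $r-1$ in $M_X(v)$ (Proposition~\ref{indet-of-psi}, Lemma~\ref{codim-of-K^}). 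If $E\in K\cap L$ and $F_0\hookrightarrow E|_{X_0}$ is torsion free of positive slope, then $F_0$ meets the torsion of $E|_{X_0}$ trivially and hence maps injectively into $E^{\vee\vee}|_{X_0}$, so $E^{\vee\vee}$ lies in the corresponding locus $L_u\subset M_X(u)$. Since $\dim M_X(u)=2t-2r<\dim M_X(v)$, the inductive hypothesis gives $\mathrm{codim}_{M_X(u)}L_u\ge 2$, so $L_u$ is a proper subvariety of $M_X(u)$ (and if $M_X(u)=\varnothing$ then $K=\varnothing$); therefore $K\cap L$ is a proper closed subset of the irreducible $K$, whence $\mathrm{codim}_{M_X(v)}(K\cap L)\ge (r-1)+1=r\ge 2$.

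It remains to treat $L\setminus K$, the locally free sheaves in $L$; by Proposition~\ref{ppp-fib}(1) (as above) this set is empty unless $t\ge r+d$, so assume that. Stratify it by the fibre $X_0$ carrying the destabilisation (a one-dimensional family of fibres) and by the numerical type of the quotient $Q_0:=E|_{X_0}\big/(E|_{X_0})^{>0}$, where $(E|_{X_0})^{>0}$ denotes the positive part of the Harder--Narasimhan filtration, of rank $r_0\ge 1$ and degree $D\ge 1$; there are finitely many such types, again by Proposition~\ref{ppp-fib}, and $r_Q:=r-r_0$ is determined by the stratum. On a stratum consider the elementary modification
$$0\longrightarrow E'\longrightarrow E\longrightarrow \iota_{0*}Q_0\longrightarrow 0,\qquad E'=\ker\bigl(E\twoheadrightarrow E|_{X_0}\twoheadrightarrow Q_0\bigr),$$
with $\iota_0\colon X_0\hookrightarrow X$ the inclusion. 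Since $E'$ coincides with $E$ on a general fibre it is stable for an almost-fibrelike polarisation (Proposition~\ref{prop24}), so $E'\in M_X(v')$ with $v'=v-v(\iota_{0*}Q_0)$; as $v(\iota_{0*}Q_0)$ is isotropic, a Mukai-pairing computation gives $\dim M_X(v')=2t-2n$, where $n=r_0 d+rD$, and Proposition~\ref{ppp-fib}(1) shows $n\le t$.

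Finally, $E$ is recovered from $(X_0,Q_0,E')$ together with its class in $\Ext^1_X(\iota_{0*}Q_0,E')$. A fibre of $\pi$ moves in a base-point-free pencil, so its normal bundle is trivial; adjunction then gives $\Ext^1_X(\iota_{0*}Q_0,E')\cong\Hom_{X_0}(Q_0,E'|_{X_0})$, and Riemann--Roch on the genus-one curve $X_0$ yields $\dim\Hom_{X_0}(Q_0,E'|_{X_0})=n+\dim\Ext^1_{X_0}(Q_0,E'|_{X_0})$. By Serre duality on $X_0$ the correction term equals $\dim\Hom_{X_0}(E'|_{X_0},Q_0)$; from the Tor sequence of the modification $E'|_{X_0}$ is an extension $0\to Q_0\to E'|_{X_0}\to (E|_{X_0})^{>0}\to 0$, and since $\Hom_{X_0}\bigl((E|_{X_0})^{>0},Q_0\bigr)=0$ (positive slopes against non-positive ones), this space is just the set of endomorphisms of $Q_0$ along which the extension splits, which is $0$ for the general $E'$. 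Granting this, $\dim\PP\Ext^1_X(\iota_{0*}Q_0,E')=n-1$, and since the $Q_0$ of fixed numerical type form a family of dimension at most $r_Q$,
$$\dim(\text{stratum})\ \le\ 1+r_Q+(2t-2n)+(n-1)\ =\ 2t-n+r_Q\ \le\ 2t-(r_0+r)+(r-r_0)\ =\ 2t-2r_0\ \le\ 2t-2,$$
using $n=r_0 d+rD\ge r_0+r$. Together with the estimate for $L\cap K$ this gives $\mathrm{codim}_{M_X(v)}L\ge 2$ and closes the induction.

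The main obstacle is the vanishing $\Ext^1_{X_0}(Q_0,E'|_{X_0})=0$ for the general $E'\in M_X(v')$: one must know that the restriction of the general member of the moduli space to the fixed fibre $X_0$ is a sufficiently general extension of $(E|_{X_0})^{>0}$ by $Q_0$, and on the deeper strata where this $\Ext^1$ jumps one must check that the corresponding sublocus of $M_X(v')$ drops in dimension by at least the size of the jump, so that the bound $2t-2r_0$ survives. This is precisely where the estimate is tight (the critical case being $r=2$, $d=D=r_0=1$), so a careless treatment of that $\Ext$-group loses exactly the one dimension that is needed.
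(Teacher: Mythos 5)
Your strategy is essentially the one the paper uses: pass to the locally free part of $L$, perform the elementary modification $E'=\ker(E\to\iota_{0*}Q_0)$ along the destabilised fibre, note that $E'$ is again stable and moves in a moduli space of dimension $2t-2n$ with $n=r_0d+rD\ge r+d$, and recover $E$ from an extension class, so that everything reduces to bounding $\dim\Ext^1_X(\iota_{0*}Q_0,E')\cong\hom_{X_0}(Q_0,E'|_{X_0})=n+\hom_{X_0}(E'|_{X_0},Q_0)$. But the proof is not complete: the step you defer --- that $\hom_{X_0}(E'|_{X_0},Q_0)=0$ for the general $E'$, and that on the locus where it jumps to $j$ the base drops in dimension by at least $j$ --- is the actual content of the lemma, not a technical afterthought. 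As you yourself observe, your count $2t-n+r_Q\le 2t-2r_0$ has zero slack when $r_0=d=D=1$; if the extension $0\to Q_0\to E'|_{X_0}\to (E|_{X_0})^{>0}\to 0$ were split for every $E'$ in the relevant stratum (nothing you say excludes this), you would only obtain codimension one. The paper closes exactly this gap by strengthening the inductive statement: it introduces the loci $L^{G_0,m}(v)$ on which $\hom(G_0\dual,E|_{X_0})=m$ and proves $\mathrm{codim}\,L^{G_0,m}(v)\ge m(z_G+1)$ (its $z_G$ is your $n$) by induction on $\dim M_X(v)$, showing that under the elementary modification the invariant $m$ changes by at most one and feeding the three cases $m'\in\{m-1,m,m+1\}$ back into the hypothesis. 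Some bookkeeping of this kind is unavoidable; ``for the general $E'$'' cannot simply be granted.

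Two smaller points. Your bound of $r_Q$ for the family of possible $Q_0$ of fixed numerical type is correct but needs justification for unstable $Q_0$: on an elliptic curve the Harder--Narasimhan filtration splits (since $\Ext^1(B,A)\cong\Hom(A,B)^*=0$ when $\mu(A)>\mu(B)$), so $Q_0$ is a direct sum of semistable bundles and the family has dimension at most the sum of the relevant gcd's, hence at most $r_Q$. And your treatment of $L\cap K$ via irreducibility of $K$ is only needed when $r=2$ (otherwise $\mathrm{codim}\,K\ge 2$ already); it quietly uses that $K=\overline{K_1\setminus K_2}$ and that $M_X(u)$ is irreducible, which is consistent with the paper's standing assumptions but should be stated.
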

\begin{proof}
Let $X_0$ be a fiber of $X \to \PP^1$, and $G_0\dual$ be a stable vector
bundle on the fiber $X_0$ of positive slope. Furthermore let $m \geq 0$
be an integer.
We denote by $L^{G_0,m}(v)$ the subset of vector bundles $[E] \in
M_X(v)$ such that $\mu_{\rm max}(E_{\vert X_0})=\mu(G_0\dual)$ and
$\hom(G_0\dual,E_{\vert X_0}) =m$. 
We consider the number $z=z_G=\rk(G)\rk(E_0)(\mu(G) - \mu(E_0)) =
r\deg(G)+d\rk(G) \geq r+d$. We claim that for $m>0$ the codimension
of $L^{G,m}(v)$ is at least $m(z_G+1)$, for all $t$. We prove this inductively over
the dimension $2t$ of the moduli space. The initial step is Proposition
\ref{ppp-fib} which shows that for $2t = \dim (M_X(v)) \leq 4$ these set are
empty.

We consider the the set $S$ of short exact sequences 
\[ 0 \to (E')\dual \to E\dual \to i_* G_0 \to 0 \]
with $[E] \in L^{G_0,m}(v)$ and $i:X_0 \to X$ is the closed embedding of the fiber.
Since $\dim \Hom(E\dual ,i_* G_0) = \dim\Hom(G_0\dual,E_{\vert X_0}) =m$, and any
non-zero $\pi: E\dual \to G_0$ is surjective, the set $S$ is a
$\PP^{m-1}$-bundle over $L^{G_0,m}(v)$.
Let $m'$ be the dimension of $\Hom((E')\dual,G_0)$.
Denoting the kernel of $(E\dual)_{\vert X_0} \to G_0$ by $Q_0$ we obtain two short exact sequences:
\[
0 \to Q_0 \to (E\dual)_{\vert X_0} \to G_0 \to 0 
\qquad
\mbox{and}
\qquad
0 \to G_0 \to ((E')\dual)_{\vert X_0} \to Q_0 \to 0
\]
on $X_0$.
Using $\hom(G_0,G_0)=\hom^1(G_0,G_0)=1$,
we compute from the first sequence that $\hom(Q_0,G_0) = m-\ee$,
and from the second that $m'=\hom(Q_0,G_0)+\ee'$ where $\ee,\ee' \in \{0,1\}$. We conclude that
$m' \in \{ m-1,m,m+1 \} $. We decompose $S = S^{m-1} \cup S^m \cup
S^{m+1}$ depending on the value of $m'$.
The sheaf $E'$ is again stable, since its restriction to a general fiber is stable.
As computed in Proposition \ref{ppp-fib} the sheaves $E'$ are parameterized by a moduli space
$M_X(v')$ of dimension $\dim M_X(v') = \dim M_X(v)-2z_G$.
Since $\Ext^1(i_* G_0, (E')\dual)$ is isomorphic to
$\Hom(G_0,(E')\dual_{\vert X_0})$ we deduce that the set $S^{m'}$ is
contained in a $\PP^{z_G+m'-1}$-bundle over the closed subset $L^{G_0,m'}(v')$
of $M_X(v')$. The
induction hypothesis gives us estimates for the dimension of
$L^{G_0,m'}(v')$. Now we have
\[ \dim(S^{m'}) \leq 2 \dim(M_X(v)) - 2z_G -m'(z_G+1) + z_G+m'-1
\]
Since $S$ is a $\PP^{m-1}$- bundle over 
$M_X^{G,m}(v)$ and $m \leq m'+1$, we conclude for the dimension 
\[ \dim(L^{G,m}(v)) \leq  \dim(M_X(v)) -m(z_G+1) -z_G+(m-m')z_G  \leq
\dim(M_X(v)) -m(z_G+1) \, .  \]
Since $z_G \geq r+d \geq 3$, we conclude the the codimension of
$L^{G,m}(v)$ is at least four.
Each sheaf $[E] \in L$ belongs to $L^{G,m}(v)$ for a fiber $X_0$ and a
stable $G_0$ on $X_0$. The sheaf $G_0$ is stable and its slope runs in a finite set of rational numbers, the
dimension $m$ of the morphism space is also bounded. The
choices of the fiber $X_0$ and of a stable $G_0$ on the fiber are both of dimension one. We finally get the proof.
\end{proof}

\begin{lemma}\label{codim-of-L^}
The subset $\widehat{L}$ has codimension at least 2 in $M_{\widehat{X}}(\hat{v})$.
\end{lemma}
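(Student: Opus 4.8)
The plan is to adapt the argument of Lemma~\ref{codim-of-L} to $\widehat X$ and the Mukai vector $\hat v$. First I would unwind the definition: for $F\in\widehat L$ the canonical exact sequence $0\to A\to F\to B\to 0$ (with $A$ a $\Phi\WIT_0$ sheaf, $B$ a $\Phi\WIT_1$ fibre sheaf of fibrewise maximal slope $\le 0$) has $\mu(B)<0$. Since the zero-dimensional obstruction to local freeness is $\Phi\WIT_0$, such an $F$ is necessarily locally free: a non-locally-free stable sheaf of Mukai vector $\hat v$ contributes only degree-zero fibre sheaves to $B$ and so lies in $\widehat K$, whose codimension is already given by Lemma~\ref{codim-of-K^}. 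Then, picking a fibre $\widehat X_0$ on which the corresponding summand $Q_0$ of $B$ has negative slope and taking a stable quotient of the last Harder--Narasimhan piece of $Q_0$, I would obtain a surjection $F\twoheadrightarrow\iota_{0*}Q$ with $Q$ a \emph{stable} bundle on $\widehat X_0$ of rank $r_0\ge 1$ and degree $\delta_0\le -1$; set $A':=\ker(F\twoheadrightarrow\iota_{0*}Q)$, which is stable (its restriction to a general fibre is that of $F$, and $\gcd(d,r)=1$). Thus $\widehat L$ is covered, as $(\widehat X_0,Q)$ vary over a finite union of families, by the images of the projective bundles $\PP\Ext^1_{\widehat X}(\iota_{0*}Q,A')$ over the moduli spaces of the sheaves $A'$.

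For the dimension count I would use part (1) of Proposition~\ref{ppp-fib} on $\widehat X$ (again K3, so $K_{\widehat X}=0$): since $F$ has rank $d$ and fibre slope $r/d$, the quotient $\iota_{0*}Q$ has cost
\[ z\;:=\;d\,r_0\Bigl(\frac{r}{d}-\frac{\delta_0}{r_0}\Bigr)\;=\;r_0r+d|\delta_0|\;\le\;t,\qquad\text{with }\;z\ge r+d\ge 3. \]
The Mukai vector of $A'$ is $\hat v-(0,r_0\hat f,\delta_0)$, and a direct computation with the pairing gives $\langle v(A'),v(A')\rangle=\langle\hat v,\hat v\rangle+2z$, hence $\dim M_{\widehat X}(v(A'))=2t-2z$. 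The normal bundle of $\widehat X_0$ being trivial, $\Ext^1_{\widehat X}(\iota_{0*}Q,A')\cong\Hom_{\widehat X_0}(Q,A'_{|\widehat X_0})$, which for $A'$ with semistable restriction has dimension $\chi_{\widehat X_0}(Q,A'_{|\widehat X_0})=r_0r+d|\delta_0|=z$; and the fibre $\widehat X_0$ together with the stable bundle $Q$ of fixed numerical type vary in a family of dimension $2$ (only finitely many types $(r_0,\delta_0)$ occur, all with $z\le t$). Putting these together,
\[ \dim\widehat L\;\le\;(2t-2z)+2+(z-1)\;=\;2t-z+1\;\le\;2t-2, \]
so $\widehat L$ has codimension at least $z-1\ge r+d-1\ge 2$.

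The step that really needs care — and where I expect the main obstacle — is exactly the one already met in Lemma~\ref{codim-of-L}: the fibre $\widehat X_0$ is \emph{fixed}, so $A'_{|\widehat X_0}$ need not be semistable and $\hom^1_{\widehat X_0}(Q,A'_{|\widehat X_0})=\hom_{\widehat X_0}(A'_{|\widehat X_0},Q)$ need not vanish, which enlarges $\PP\Ext^1_{\widehat X}(\iota_{0*}Q,A')$; but a nonzero map $A'_{|\widehat X_0}\to Q$ forces $A'$ itself into a locus of the same kind as $\widehat L$ inside the strictly smaller moduli space $M_{\widehat X}(v(A'))$. One therefore stratifies $\widehat L$ by the dimension $m$ of the relevant $\Hom$-space and proves the codimension bound by induction on $\dim M_{\widehat X}(\hat v)=2t$, the base case $2t\le 4$ being the emptiness assertion of Proposition~\ref{ppp-fib}, precisely as in Lemma~\ref{codim-of-L}. (A shortcut: since $\widehat L$ lies in the locally free locus, one may transport it by $F\mapsto F\dual$ into $M_{\widehat X}(\hat v\dual)$ — a moduli space of the same dimension $2t$ — where it becomes a locus of bundles admitting a destabilising sub-bundle of positive slope on some fibre, i.e.\ a locus of exactly the type treated in Lemma~\ref{codim-of-L}, with cost again $\ge d+r\ge 3$; the estimate then follows verbatim.)
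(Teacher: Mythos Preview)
Your argument is essentially correct, but it takes a different route from the paper. The paper does not rerun the induction of Lemma~\ref{codim-of-L} on $\widehat X$; instead it uses the Fourier--Mukai transform itself to compare $\widehat L$ directly with $L$. Concretely, for $F\in\widehat L$ one takes a stable quotient $B_{\min}$ of minimal slope on a fibre, observes that $B_{\min}=\Psi(\iota_*G)$ for a stable fibre sheaf $G$ on $X$ of \emph{positive} slope, and writes $F$ as an extension in $\Ext^1_{\widehat X}(\Psi(\iota_*G),F')$. Serre duality identifies the dimension of this space with that of $\Ext^1_{\widehat X}(F',\Psi(\iota_*G))$, and the latter extensions correspond (via $\Phi$) to sheaves in $L\subset M_X(v)$. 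Thus the strata of $\widehat L$ and of $L$ are dual projective bundles over the same base, exactly as in Lemma~\ref{codim-of-K^}, and the codimension estimate follows from Lemma~\ref{codim-of-L} without redoing the induction. Your ``shortcut'' via $F\mapsto F^\vee$ is close in spirit to this, but uses sheaf duality on $\widehat X$ rather than the Fourier--Mukai equivalence; the paper's choice is more in keeping with its overall method and avoids any issue about whether $F\in\widehat L$ is locally free.

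On that last point: your claim that every $F\in\widehat L$ is locally free is not quite justified. A non-locally-free $F$ whose double dual $F^{\vee\vee}$ also has a bad fibre restriction would have $B$ with both a degree-zero and a negative-degree summand, hence $\mu(B)<0$, placing $F$ in $\widehat L$ rather than $\widehat K$. This does not damage your main inductive argument (the stratification by surjections $F\twoheadrightarrow\iota_{0*}Q$ and the dimension count go through for torsion-free $F$), but it does undermine the dualisation shortcut as stated.
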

\begin{proof}
We show that the codimension of $\widehat{L}$ in
$M_{\widehat{X}}(\hat{v})$ coincides with the codimension of $L$ in
$M_X(v)$. For any $F$ in $\widehat{L}$  we have a surjection to a stable
sheaf $B_{\min}$ of minimal possible degree concentrated on a fiber. In
particular $\mu(B_{\min})<0$. Therefore $B=\Psi(i_*G)$ for a stable
sheaf on the same fiber of positive degree. Denoting the kernel of the
surjection $F \to \Psi(i_*G)$ by $F'$, we see that $F$ corresponds to a
class in $\Ext^1(\Psi(i_*G), F')$. These extensions have the same
dimension as those in $\Ext^1(F',\Psi(i_*G))$ by Serre duality.
The latter correspond to objects in $L$.
\end{proof}

\begin{remark}
Analogously to Lemma \ref{codim-of-K^} one could prove that the subsets
$L$ and $\widehat{L}$ are locally described by dual projective bundles
over the same base.
\end{remark}

We are now left with the case $r=2$, in which $K$ and $\widehat{K}$ are divisors in $M_X(v)$ and 
$M_{\widehat{X}}(\hat{v})$ respectively. Anyway, if we consider an element $E$ of $K_1 \setminus K_2$,
there are a point $x$ of $X$ and an exact sequence
$$0 \longrightarrow E \longrightarrow E^{\vee\vee} \longrightarrow k(x) \longrightarrow 0.$$
Let $p$ be the map on $M_X(v)$
defined by the global sections of the generalized $\Theta$-line bundle
$\kl$ on $M_X(v)$, as in \cite{Hdual}, and let $\bar{M}$ be the image
via $p$ of $M_X(v) \setminus (K_2 \cup L)$. By \cite[Theorem 3.3]{Hdual}, $p$ is an isomorphism over
$(M_X(v) \setminus (K \cup L)) $ and a $\PP^1$-bundle over the image of
$K_1 \setminus K_2$ where all the kernels of surjections $\tilde E \to
k(x)$ are identified for fixed $\tilde E$ and $x \in X(k)$.

Likewise we obtain a map
$\hat p: M_{\widehat X}(\hat v) \setminus (\widehat K_2 \cup \widehat L)
\to \bar M$ by assigning to $F$ the divisor of jumping lines of $\Phi(F)$. 
We have $p(K_1)= \hat p(\widehat K_1)$ and both morphisms $K_1 \to p(K_1)$
and $\hat K_1 \to \hat p(\widehat K_1)$ are $\PP^1$-bundles outside $K_2$ and $\widehat{K}_2$ respectively.
Since $K_2$ and $L$ are of codimension at least two we conclude
$\Pic(M_X(v)) = \ZZ \cdot K_1 \oplus p^* \Pic(\bar M)$. The same way we
conclude $\Pic(M_{\hat X}(\hat v)) = \ZZ \cdot \widehat K_1 \oplus \hat p^* \Pic(\bar M)$.
This ends the proof of Theorem \ref{iso-betw-pic}.

\appendix
\section{Continued fractions and the Euclidean algorithm}
In this Appendix, we give a proof of Lemma \ref{us=bridg}.
We start with a version of the Euclidean algorithm for integers $(r,d)$
with $0<d<r$. We start by setting $(r_0,d_0)=(r,d)$ and proceed
inductively with
$(r_{i+1},d_{i+1})= (d_i, \lceil \frac{r_i}{d_i} \rceil d_i -r_i)$.
The algorithm stops when $d_{n+1}=0$, the corresponding $r_{n+1}$ is the
greatest common divisor of $r$ and $d$. We assume from now on
that $r$ and $d$ are
coprime. We denote the integers $\lceil \frac{r_n}{d_n} \rceil$ by $a_n$
and remark that $a_i \geq 2$. We see that we can recover $r$ and $d$
from the numbers $\{a_i\}_{i=0,\ldots,n}$ since
\[ \frac{r}{d} = \lceil a_0,a_1, \ldots, a_n \rceil =
a_0-\frac{1}{a_1-\frac{1}{a_2-\frac{1}{a_3-\frac{1}{\dots -\frac{1}{a_n}}}}} \, .
\]
Since $(r,d)=1$ there are integers $b$ and $c$ such that $rc+db=1$. The pair $(b,c)$
is uniquely determined when we assume
$0 \leq b < r$. Note that our assumption $0<d<r$ implies $0<b<r$.
Lemma \ref{us=bridg} is now
proved by point (4) of the following result.

\begin{lemma}\label{elem-nt1}
Let $\{a_i\}_{i=0,\ldots,n}$ be integers with $a_i \geq 2$. Let
$\frac{r}{d}= \lceil a_0,a_1, \ldots, a_n \rceil $ written
in reduced form. Let $(b,c)$ be integers with $rc+db=1$ and
$0  \leq b < r$. The following holds:
\begin{enumerate}
\item The number $\frac{r}{d}$ satisfies $\frac{r}{d}>1$.
\item The integer $b$ is positive.
\item If we reverse the order in the continued fraction for
$\frac{r}{d}$, then we
obtain the continued fraction expression of $\frac{r}{b}$.
In short:
$\frac{r}{b} = \lceil a_n,a_{n-1}, \ldots, a_0 \rceil $.
\item We have an equality $a_n= \lceil \frac{r}{b} \rceil$.
\end{enumerate}
\end{lemma}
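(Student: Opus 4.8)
The plan is to study the Hirzebruch--Jung (``minus'') continued fraction $\lceil a_0,\dots,a_n\rceil$ through its convergents and to encode the reversal of the tuple as a transpose of a $2\times 2$ matrix product.

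First I would introduce the convergents $p_k/q_k$ of $\lceil a_0,\dots,a_k\rceil$, defined by $p_{-1}=1$, $q_{-1}=0$, $p_0=a_0$, $q_0=1$ and the recursion $p_k=a_kp_{k-1}-p_{k-2}$, $q_k=a_kq_{k-1}-q_{k-2}$ for $1\le k\le n$; an immediate induction shows $\lceil a_0,\dots,a_k\rceil=p_k/q_k$. Because each $a_i\ge 2$, a second induction gives $p_k>p_{k-1}>0$ and $q_k>q_{k-1}>0$ for $1\le k\le n$ (in particular every partial continued fraction is defined, no denominator vanishing), together with the estimate $a_0-1<p_k/q_k\le a_0$, with equality only when $k=0$. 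This is exactly statement (1), and the estimate moreover yields $\lceil p_k/q_k\rceil=a_0$, a fact I will reuse for (4). Finally the telescoping computation $p_kq_{k-1}-p_{k-1}q_k=p_{k-1}q_{k-2}-p_{k-2}q_{k-1}=\dots=p_0q_{-1}-p_{-1}q_0=-1$ shows $\gcd(p_k,q_k)=1$, so $r=p_n$ and $d=q_n$.

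Next I would reformulate everything with the matrix $M(a)=\left(\begin{smallmatrix}a&-1\\1&0\end{smallmatrix}\right)$: the recursion says precisely $\left(\begin{smallmatrix}p_{k-1}&-p_{k-2}\\q_{k-1}&-q_{k-2}\end{smallmatrix}\right)M(a_k)=\left(\begin{smallmatrix}p_k&-p_{k-1}\\q_k&-q_{k-1}\end{smallmatrix}\right)$, whence by induction $M(a_0)\cdots M(a_n)=\left(\begin{smallmatrix}r&-p_{n-1}\\ d&-q_{n-1}\end{smallmatrix}\right)$, a matrix of determinant $1$ since $\det M(a)=1$. Expanding that determinant gives the Bézout relation $d\,p_{n-1}-r\,q_{n-1}=1$, that is, $r(-q_{n-1})+d\,p_{n-1}=1$. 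The key step is the reversal: one has the symmetry $M(a)^{\top}=J\,M(a)\,J$ with $J=\mathrm{diag}(1,-1)$, $J^2=I$, so $M(a_n)\cdots M(a_0)=J\,(M(a_0)\cdots M(a_n))^{\top}\,J=\left(\begin{smallmatrix}r&-d\\ p_{n-1}&-q_{n-1}\end{smallmatrix}\right)$. Applying the first paragraph to the reversed tuple $(a_n,\dots,a_0)$ (whose entries are again all $\ge 2$) and reading off the first column of this matrix, we obtain $\lceil a_n,\dots,a_0\rceil=r/p_{n-1}$ written in lowest terms. Since $0<p_{n-1}<p_n=r$ by the monotonicity of paragraph one, uniqueness of the representative $(b,c)$ with $0\le b<r$ in the Bézout relation forces $b=p_{n-1}$ and $c=-q_{n-1}$. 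Hence $b=p_{n-1}>0$, which is (2); $r/b=r/p_{n-1}=\lceil a_n,\dots,a_0\rceil$, which is (3); and $\lceil r/b\rceil=\lceil\,\lceil a_n,\dots,a_0\rceil\,\rceil=a_n$ by the estimate of paragraph one, which is (4) and therefore Lemma~\ref{us=bridg}.

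All the ingredients are elementary inductions; the only genuinely substantive point is the transpose symmetry $M(a)^{\top}=J\,M(a)\,J$, which is what makes the reversal of the continued fraction cost nothing, together with a little care at the boundary $n=0$, where $p_{-1}=1$ legitimately plays the role of $b$ and the reversed tuple coincides with the original one.
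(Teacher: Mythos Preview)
Your proof is correct and follows essentially the same route as the paper: both encode the continued fraction as a product of $2\times 2$ matrices (your $M(a)$ is exactly the paper's $T^{a}S$) and exploit the conjugation identity $M(a)^{\top}=J\,M(a)\,J$ (the paper uses $R=-J$, which gives the same conjugation) to turn the transpose into the reversed product. Your explicit use of convergents and the sharper estimate $a_0-1<p_k/q_k\le a_0$ makes the derivation of (4) slightly more transparent, but the substance is identical.
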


\begin{proof}
(1) Follows by induction since $\lceil a_n \rceil =a_n \geq 2$, and
$\lceil a_{i-1},a_i, \ldots, a_n \rceil 
= a_{i-1}-\frac{1}{\lceil a_i, \ldots, a_n \rceil }$.\\
(2) Since $\frac{r}{d}>1$ we have $0<d<r$. Thus there are no solutions
$(b,c)$ of the equation $rc+db=1$ with $b=0$.\\
(3) We observe that $\left( \begin{array}{r} r \\d \end{array}
\right)$ is the first column of the SL$_2(\ZZ)$-matrix
\[ A = T^{a_0} \cdot S \cdot T^{a_1} \cdot S \cdot \dots T^{a_n} \cdot
S \qquad
\mbox{with} \quad
T=\left( \begin{array}{rr} 1&1 \\ 0&1\end{array} \right)
\mbox{, and }
S=\left( \begin{array}{rr} 0&-1 \\ 1&0\end{array} \right)
\,.\]
We write $A=\left( \begin{array}{rr} r&-b' \\ d&c'\end{array} \right)$
and set $R=\left( \begin{array}{rr}-1&0 \\ 0&1\end{array} \right)$.
We compute that
$R \cdot A^t \cdot R= \left( \begin{array}{rr} r&-d \\
b'&c'\end{array} \right)$.
On the other hand using the above product presentation for $A$ 
together with the two matrix equations
$R=R^{-1}$ and $R \cdot S^t \cdot (T^{a_i})^t \cdot R = T^{a_i} \cdot
S$ we obtain that:
\[ \begin{array}{rcll}
R \cdot A^t \cdot R
& = & R \cdot S^t \cdot (T^{a_n})^t \cdot S^t \cdot
(T^{a_{n-1}})^t \cdot \dots \cdot S^t \cdot (T^{a_0})^t \cdot R\\
& = &( R \cdot S^t \cdot (T^{a_n})^t \cdot R ) (R \cdot S^t \cdot
(T^{a_{n-1}})^t \cdot R) \cdot \dots \cdot (R \cdot S^t \cdot
(T^{a_0})^t \cdot R)\\
&=&T^{a_n} \cdot S \cdot T^{a_{n-1}} \cdot S \cdot \dots T^{a_0} \cdot S
\end{array}\\\]
This implies that $\frac{r}{b'}=\lceil a_n,a_{n-1}, \ldots, a_0 \rceil$.
We deduce from $a_i \geq 2$ and part (1) that $0< b' <r$. Since the
matrix $A$ is in SL$_2(\ZZ)$ we have $rc'+db'=1$. Thus $b=b'$.\\
(4) Is a direct consequence of the expression $\frac{r}{b}=\lceil
a_n,a_{n-1}, \ldots, a_0 \rceil$ derived in (3).
\end{proof}


\begin{thebibliography}{9}

\bibitem{Ati}
M.~Atiyah,
{\em Vector bundles over an elliptic curve},
Proc.~London Math.~Soc.~{\bf 7} (1957) 414--452.

\bibitem{bridgellip} T.~Bridgeland, {\em Fourier-Mukai transforms for elliptic surfaces},
J.~Reine~Angew.~Math. {\bf 498} (1998) 115--133.

\bibitem{friedellip}
R.~Friedman, {\em Vector bundles and ${SO}(3)$-invariants for elliptic surfaces},
J.~Am.~Math.~Soc. {\bf 8} (1995) No 1, 29--139.

\bibitem{Hdual} G.~Hein,
{\em Duality construction of moduli spaces},
Geometriae Dedicata {\bf  75} (1999) 101--113.

\bibitem{HePold} G. Hein~and D.~Ploog,
{\em Postnikov stability for complexes},
eprint arXiv:0704.2512.

\bibitem{HP} G. Hein~and D.~Ploog,
{\em Postnikov-Stability versus Semistability of Sheaves},
eprint arXiv:0901.1554.

\bibitem{huylehn}
D.~Huybrechts~and M.~Lehn,
{\em The geometry of moduli spaces of sheaves},
Aspects of Mathematics, E31, Braunschweig 1997.

\bibitem{marian-oprea}
A.~Marian, and D.~Oprea,
{\em Generic strange duality for $K3$ surfaces}, with an Appendix by Kota Yoshioka, 
preprint arXiv:1005.0102.

\bibitem{OG} K.~O'Grady,
{\em The weight-two Hodge structure of moduli spaces of sheaves on a $K3$
surface},
J.~Algebraic Geom.~{\bf 6 no.~4} (1997) 599--644. 

\bibitem{polishchuk}
A.~Polishchuk,
{\em Abelian varieties, theta functions and the Fourier transform},
Cambridge (2003).

\bibitem{Popa}
M.~Popa,
{\em Dimension estimates for Hilbert schemes and effective base point
freeness on moduli spaces of vector bundles on curves},
Duke Math J.~{\bf 107} (2001) 469--495.
\end{thebibliography}
\end{document}